\newtheorem{tm}{Theorem}
\newtheorem{rk}{Remark}
\newtheorem{prop}{Proposition}
\newtheorem{lm}{Lemma}
\newtheorem{ex}{Example}
\newcommand{\E}{\mathbb E}
\newcommand{\bi}{\mathbf i}
\newcommand{\bs}{\mathbf s}
\newcommand{\<}{\langle}
\renewcommand{\>}{\rangle}
\begin{document}

\title{Quantifying the effect of random dispersion for logarithmic Schr\"odinger equation}

\author{Jianbo Cui}
\address{Department of Applied Mathematics, The Hong Kong Polytechnic University, Hung Hom, Hong Kong.}
\email{jianbo.cui@polyu.edu.hk}
\thanks{The research is partially supported by the Hong Kong Research Grant Council ECS grant 25302822, internal grants (P0039016, P0041274, P0045336) from Hong Kong Polytechnic University and the CAS AMSS-PolyU Joint Laboratory of Applied Mathematics. The research of the second author is supported by National Natural Science Foundation of China (No. 12101596, No. 12031020,
No.12171047).
}

\author{Liying Sun}
\address{Corresponding author. 
Academy for Multidisciplinary Studies, Capital Normal University,  Beijing 100048, China.}
\email{liyingsun@lsec.cc.ac.cn}

\begin{abstract}
This paper is concerned with the random effect of the noise dispersion for stochastic logarithmic Schr\"odinger equation emerged from the optical fibre with dispersion management. The well-posedness of the logarithmic Schr\"odinger equation with white noise dispersion is established via the regularization energy approximation and a spatial
{scaling}  property. 
For the small noise case, the effect of the noise dispersion is quantified by the proven large deviation principle under additional regularity assumptions on the initial datum. 
As an application, we show that for the regularized model, the exit from a neighborhood of the attractor of deterministic equation occurs on a sufficiently large time scale. 
Furthermore, the exit time and exit point in the small noise case, as well as the effect of large noise dispersion, is also discussed for the stochastic logarithmic Schr\"odinger equation. 
\end{abstract}

\subjclass[2010]{Primary  35Q55; Secondary 35R60, 60H15, 60F10}

\keywords{stochastic nonlinear Schr\"odinger equation, 
logarithmic nonlinearity,
noise dispersion, 
large deviation principle,
exit problem
}

\maketitle

\section{Introduction}
The stochastic nonlinear Schr\"odinger equation describes the propagation of varying envelops of a wave packet in media with both weakly nonlinear and dispersive responses (\cite{SS99}), and has been applied in nonlinear optics, hydrodynamics, biology, crystals and Fermi-Pasta-Ulam chains of atoms. In nonlinear optics, the spantaneous emission noise in stochastic nonlinear Schr\"odinger equation appears since the amplifiers are placed along the fiber line to compensate for the loss caused by the weak damping in the fiber \cite{desurvire2002erbium}.
 Due to the inherent uncertainty on the amplified signal and quantum considerations, amplification
is intrinsically associated with small noise \cite{KLMM01W}.
In the context of crystal and  Fermi--Pasta--Ulam  chains of atoms, the noise accounts for thermal effects.

In this paper, we are interested in studying the random effect of the noise dispersion for the following stochastic  logarithmic Schr\"odinger equation (SLogSE)
\begin{align}\label{slogse}
du^{\epsilon}(t)&=\bi \sqrt{\epsilon} \Delta u^{\epsilon}(t)\circ dB(t)+\bi \lambda \log(|u^{\epsilon}(t)|^2)u^{\epsilon}(t)dt\\\nonumber
&\quad +\bi V[u^{\epsilon}(t)]u^{\epsilon}(t)dt-\alpha_1 u^{\epsilon}(t)dt
\end{align}
emerged from the optical fibre with dispersion management (\cite{Agra01b,Agra01a}). 
Here $u^{\epsilon}(0)=u_0$, $\lambda\in \mathbb R/\{0\}$ shows the force of nonlinear interaction of the logarithmic potential,   $\epsilon>0$ characterizes the intensity of noise dispersion and $\alpha_1>0$ is the weak damping coefficient over long distances. The Laplacian operator is defined on $\mathbb R^d$, $V[\cdot]$ represents a nonlocal interaction defined by $V[u](y)=\int_{\mathbb R^d} V(y-x)|u(x)|^2 dx$ for some function $V$, $B(\cdot)$ is the standard Brownian motion on a complete probability space $(\Omega,\mathcal F, \{\mathcal F_t\}_{t\ge 0}, \mathbb P)$,   and  the symbol ``$\circ$'' means that
the stochastic integral is understood in the Stratonovich sense. 

In the last two decades, there have been fruitful results and studies on stochastic nonlinear Schr\"odinger equations with polynomial and smooth nonlinearities driven by random forces. For instance, results on the well-posedness and the effect of a noise on the blow-up phenomenon {have} been proved in \cite{BRZ14,BHW22,CHS18b,BD99,DD02b,DD05,OO20} and references therein.  
Due to the loss of analytical expression of the solution, several numerical methods have been designed to simulate the behaviors of  stochastic nonlinear Schr\"odinger equations, such as {in}  \cite{BC23,CH16,CH17,CHL16b,CHLZ17,DD02a}, { just to name a few.} 
 Recently, more and more attention has been paid on stochastic nonlinear Schr\"odinger equations with random dispersion emerged from dispersion management (see, e.g., \cite{MR2652190, MR2832639,DR22,MR4167043,MR4278943}). In \cite{MR2652190}, it has been shown that stochastic nonlinear Schr\"odinger equation  with white noise dispersion is the limit of nonlinear Schr\"odinger equation with a scaling sequence of real-valued stationary random processes. Many researchers are also devoted into its numerical study (see, e.g., \cite{BDD15,MR4400428,MR3736655,MR3649275,MR4278943}).
In contrast, less result is known for the effect of white noise dispersion on stochastic nonlinear Schr\"odinger equations with logarithmic nonlinearities, which is one motivation of this current study. 

In determinstic case, it is known that the logarithmic nonlinearity makes the logarithmic Schr\"odinger equation,
\begin{align}\label{logse}
    \partial_t u(t)=\bi \Delta u(t) +\bi \lambda \log(|u(t)|^2)u(t),
\end{align}
unique among many nonlinear wave equations. It has been shown in \cite{CG18} that when $\lambda<0$, the modulus of the solution converges to a universal Gaussian profile by scaling in space via the dispersion rate. The idea of scaling in space and time plays an important role in studying \eqref{logse}. Inspired by such idea, one may formally define {$e^{-\bi l(t)|x|^2}v(t,l(t)x)=u(t,x)$} for $x\in \mathbb R^d$ and some positive (or negative) and continuous differentiable function $l$. 
{Then it follows that 
\begin{align*}
&\frac {d}{dt}e^{-\bi l(t)|x|^2}v(t,l(t)x)\\
=&\partial_t v(t, l(t) x)e^{-\bi l(t)|x|^2}
+\dot l(t)  \nabla_x v(t, l(t) x)\cdot x e^{-\bi l(t)|x|^2}-\bi \dot l(t) |x|^2e^{-\bi l(t)|x|^2}v(t,l(t)x)
\\
=&\bi \Delta (e^{-\bi l(t)|x|^2}v(t, l(t)x)) +\bi \lambda \log(|v(t, l(t) x)|^2)v(t,l(t)x)e^{-\bi l(t)|x|^2}.
\end{align*}
Direct calculations yield that $v$ satisfies   
\begin{align}\label{dis-det}
  \partial_t  v(t,x)&=\bi (l(t))^2\Delta v(t,x)+\bi \lambda \log(|v(t,x)|^2) v(t,x)\\\nonumber
  & +(4 (l(t))^2-\dot l(t))\nabla v\cdot x+(\bi \dot l(t) |x|^2-\bi 4 (l(t))^2|x|^2+2l(t))v(t,x).
\end{align}
Thus, under suitable conditions on $l(t)$, the properties of \eqref{logse} can be transformed into  those of \eqref{dis-det}  with the dispersion $l^2(t)$ via the above scaling technique. 
Unfortunately, this approach fails for studying \eqref{slogse} since $\epsilon \dot W (t)\neq (l(t))^2$ for any $l(t)$.}

Since \eqref{slogse} involves with the logarithmic nonlinearity, the techniques in \cite{MR2652190,MR2832639} could not be used directly to show its well-posedness. 
To deal with the singularity caused by the possible vacuum of logarithmic nonlinearity, we will exploit the idea and functional setting in \cite{CG18,CH23} where the regularized approximation {is} used.  Moreover, in section \ref{sec-m1} we present several subtle estimates on the uniform bound of the regularized approximations such that the global well-posedness of \eqref{slogse} could be established under certain assumptions on $V$. We also find an interesting scaling result in space for \eqref{slogse} and its regularization approximation. 
More precisely, denote the solution $u^{\epsilon,\delta}(t,x)$ of the following regularized SlogSE 
\begin{align}\label{reg-log-rands}
  du^{\epsilon,\delta}(t)&=\bi \Delta u^{\epsilon,\delta}(t)\circ \sqrt{\epsilon} dB(t)+\bi\lambda f_{\delta}(|u^{\epsilon,\delta}(t)|^2)u^{\epsilon,\delta}(t) dt\\\nonumber
  &\quad+\bi V[u^{\epsilon,\delta}(t)]u^{\epsilon,\delta}(t)dt-\alpha_1 u^{\epsilon,\delta}(t)dt
\end{align}
with $u^{\epsilon,\delta}(0)=u_0.$ Here $f_{\delta}$ is an approximation of the logarithmic function for $\delta>0$. For convenience, we also denote $f_{0}(\cdot)=\log(\cdot)$ and $u^{\epsilon,0}=u^{\epsilon}.$
Then defining $v^{\delta}(t,\epsilon^{\frac 14}x):=u^{\epsilon,\delta}(t,x)$, $v^{\delta}$ should satisfy
\begin{align}\label{reg-log-rand}
      dv^{\delta}(t)&=\bi \Delta v^{\delta}(t)\circ dB(t)+\bi\lambda f_{\delta}(|v^{\delta}(t)|^2)v^{\delta}(t) dt\\\nonumber 
      &\quad+\bi V[v^{\delta}(t)]v^{\delta}(t)dt- \alpha_1 v^{\delta}(t)dt,
\end{align}
where $v^{\delta}(0,x)=u_0(\epsilon^{-\frac 14}x).$
It can be seen that \eqref{reg-log-rands} is equivalent to \eqref{reg-log-rand} up to a scaling in space. 
As a by-product, we can have the following scaling properties, i.e., for $\alpha\ge 0$,
\begin{align*}
    \|u^{\epsilon,\delta}(t,\cdot)\|_{L_{\alpha}^2}=\|v^{\delta}(t, \epsilon^{\frac 14} (\cdot) )\|_{L_{\alpha}^2}=\|v^{\delta}(t,\cdot )\|_{L_{\alpha}^2},\; \\
    \|\nabla u^{\epsilon,\delta}(t,\cdot)\|=\|\nabla v^{\delta}(t, \epsilon^{\frac 14} (\cdot))\|\epsilon^{\frac 14}=\|\nabla v^{\delta}(t,\cdot)\|\epsilon^{\frac 14}
\end{align*}
for any $u^{\epsilon,\delta}(t)\in L_{\alpha}^2\cap  H^1$ and $t\ge 0$. Here $\|\cdot\|$ is the $L^2(\mathbb R^d)$-norm, $H^{\bs},\bs\ge 0,$ is the standard Sobolev space and the weighted Sobolev space $L_{\alpha}^2,\alpha\ge 0,$ is defined by 
$$L_{\alpha}^2:= \{z\in L^2| x\mapsto (1+|x|^2)^{\frac {\alpha}2} z(x) \in L^2 \}$$
with the norm $\|z\|_{L_{\alpha}^2}:=\|(1+|x|^2)^{\frac \alpha 2} z\|^2$.

Another motivation of this work lies on the study of 
the random effect of the noise dispersion for stochastic nonlinear Schr\"odinger equations. In the small noise case, the large deviation principle (LDP) of stochastic nonlinear Schr\"odinger equations with polynomial nonlinearities driven by additive and multiplicative noises have been addressed in \cite{Gau05b,Gau05}. Then the LDP are applied to studying  the asymptotic of    
 the time jitter in soliton transmission in \cite{DG08} and to quantify the exit time and exit points for the exit problem from a basin of attractor for weakly damped stochastic nonlinear Schr\"odinger equations in \cite{Gau08}. 
{When the LDP holds, the first order of the probability of rare event is that of Boltzmann theory and the square of the amplitude of the small noise acts as the temperature}. 
The rate functional of LDP generally characterizes the transition between two states and the exit from the basin of attractor of the deterministic system, which is also related to \textit{minimum action paths}. However, it is still unknown about the asymptotic behaviors and LDP of \eqref{slogse} due to the singularity on the possible vacuum. 

Thanks to the established well-posedness result of \eqref{slogse} and its regularization approximation \eqref{reg-log-rands}, we are able to partially answer the LDP problem of \eqref{slogse}.
By imposing additional regularity on the initial datum, in section \ref{sec-ldp} we derive the large deviation principle and its rate functional  for the regularization approximation of \eqref{slogse}, and then pass the limit to prove the LDP of the original system via the strong convergence property and Varadhan's contraction principle (see, e.g., \cite{MR2584458}). 
As an application, in section \ref{sec-exit}, we use the LDP to study the exit problem from a neighborhood of an asymptotically stable equilibrium point for the regularized equation \eqref{reg-log-rands} (or equivalently \eqref{reg-log-rand}), and prove that on a exponentially large time scale, the exit from the domains  of attraction for \eqref{reg-log-rands} occurs due to large fluctuations. We would like to remark that it is still hard to establish the LDP and quantify the exit problem for \eqref{slogse} directly when the considered support is $\mathcal C([0,T];H^1\cap L_{\alpha}^2).$ In section \ref{sec-fur}, we give further discussions on the related topics, including giving a rough result on the exiting points for \eqref{slogse}, proving an exponential estimate on a special rare event, and presenting the effect of the large dispersion, which may help understand the dynamics of \eqref{slogse}. 

\section{Logarithmic Schr\"odinger equation with random dispersion}
\label{sec-m1}
The rigorous derivation of \eqref{slogse} can be understood in the way of \cite{MR2652190,MR4278943}, once the well-posedness of \eqref{slogse} is established. Namely, 
\eqref{slogse} can be viewed as the limit of the following Schr\"odinger equation as $\sigma \to 0$,
\begin{align*}
   \frac {d z}{dt}=\bi\frac {\epsilon}{\sigma}m(\frac 1{\sigma^2})\Delta z+\bi V[z] z+\bi \lambda \log(|z|^2)z -\alpha_1z, 
\end{align*}
under suitable conditions on the centered stationary random process $m(\cdot)$. Thus, \eqref{slogse} also belongs to the category of stochastic Wasserstein Hamiltonian flows in the sense of \cite{CLZ2023}. 
Recall that the mild solution of \eqref{reg-log-rands} is defined by a stochastic process $u^{\epsilon,\delta}$ satisfying 
\begin{align*}
    u^{\epsilon,\delta}(t)=&S_{\sqrt{\epsilon}B}(t,0)u_0+\int_0^t \bi \lambda S_{\sqrt{\epsilon}B}(t,r) f_{\delta}(u^{\epsilon,\delta}(r))dr\\
    &+{\bf{i}} \int_0^t S_{\sqrt{\epsilon}B}(t,r) (V([u^{\epsilon,\delta}(r)])u^{\epsilon,\delta}(r))dr-\int_0^t \alpha_1 S_{\sqrt{\epsilon}B}(t,r) u^{\epsilon,\delta}(r) dr,\; a.s.
\end{align*}
Here $S_{\sqrt{\epsilon}B}(t,r)=e^{\bi \Delta \sqrt{\epsilon} (B(t)-B(r))}$, where $0\leq r\leq t$. 
Due to $B(0)$=0, we denote $S_{\sqrt{\epsilon}B}(t):=S_{\sqrt{\epsilon}B}(t,0).$
To prove the well-posedness of \eqref{slogse}, thanks to the spatial scaling technique, our idea is studying the regularization approximation \eqref{reg-log-rand} at first and then passing the limit on $\delta$.

Throughout this paper, we assume that $V\in \mathcal C^m_b(\mathbb R^d),$ for some {$m\in\mathbb N^+.$} 
We use $C$ to denote various constants which may change from line to line. 
Due to the Leibniz rule and integration by parts, similar to \cite{MR4400428}, one can verify that the mapping $\bi V[(\cdot)](\cdot)$ satisfies that for $h\in H^{m}, m=0,1,$
\begin{align}\label{growth}
\|\bi V[(v)]v\|_{H^{m}}&\le C(m,V) \|v\|^{2}\|v\|_{H^{m}},\\\label{first-order}
\| \partial_v (\bi V[(v)]v)\cdot h \|_{H^{m}}&\le C(m,V) (\|v\|^2 \|h\|_{H^m}+\|v\|\|h\|\|v\|_{ H^m}).
\end{align}
Assume that $f_{\delta}(|x|^2):=\log(\frac {\delta+|x|^2}{1+\delta|x|^2})$. Then \cite[Lemma 1]{CH23} yields that 
\begin{align}\label{growth-reg}
&\|\lambda \bi f_{\delta}(|v|^2)v\|\le |\lambda| |\log(\delta)| \|v\|,\\\label{lip-reg-var}
&|\<\lambda \bi f_{\delta}(|v|^2)v-\lambda \bi f_{\delta}(|w|^2)w, v-w\>|\le 4|\lambda|\|v-w\|^2,\\\label{loc-lip}
&\|\lambda \bi f_{\delta}(|v|^2)v-\lambda \bi f_{\delta}(|w|^2)w\|\le |\lambda|(|\log(\delta)|+2)\|v-w\|.
\end{align}
Here the complex inner product is defined by $\<w,z\>:=\Re \int_{\mathbb R^d} \bar w(x) z(x) dx.$
We suppose that the deterministic initial value $u_0\in L_{\alpha}^2\cap H^1$ for $\alpha\in [0,1].$ For convenience, we denote $H=H^0$ and $L^p:=L^p(\mathbb R^d;\mathbb C)$

In the following, we will frequently use the weighted Sobolev embedding inequality (\cite[Lemma 6]{CH23}), i.e., for $d\in \mathbb N^+, \eta\in (0,1)$ and $\alpha >\frac {d\eta}{2-2\eta}$, it holds that 
\begin{align}\label{wei-sob-ine}
   \|z\|_{L^{2-2\eta}}\le C\|z\|^{1-\frac {d\eta}{2\alpha(1-\eta)}}\|z\|^{\frac {d\eta}{2\alpha(1-\eta)}}_{L_{\alpha}^2},\;  z\in L_{\alpha}^2, 
\end{align}
and the Gagliardo--Nirenberg interpolation inequality, i.e., 
\begin{align}\label{GN-ine}
  \|z\|_{L^{2+2\eta'}} \le C\|z\|^{1-\frac {\eta'd}{2+2\eta'}}\|\nabla z\|^{\frac {\eta'd}{2+2\eta'}}, \; z\in L^{2+2\eta'}\cap H^1,
\end{align}
where $\frac {\eta'd}{2+2\eta'}\in (0,1)$ with $\eta'>0.$

Now we are in a position to present the well-posedness result of \eqref{reg-log-rands}.

\begin{tm}\label{tm-1}
Let $T>0$, $\delta\ge 0$ and $\alpha\in [0,1]$. There exists a unique mild solution $u^{\epsilon,\delta}\in\mathcal C([0,T];H),$ a.s., of \eqref{reg-log-rands} satisfying for any $p\ge 2,$
\begin{align}\label{moment-x1}
    &\E[\sup_{t\in [0,T]}\|u^{\epsilon,\delta}(t)\|^{p}_{H^1}]+\E[\sup_{t\in [0,T]}\|u^{\epsilon,\delta}(t)\|^{p}_{L_{\alpha}^2}]\\\nonumber 
    &\le C(T,\lambda,\alpha_1,p,\|u_0\|)(\|u_0\|_{H^1}^p+\|u_0\|_{L_{\alpha}^2}^p).
\end{align}
\end{tm}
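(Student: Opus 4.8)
The plan is to construct the solution by a Banach fixed-point argument on a small time interval, using the mild formulation, and then to promote this to a global-in-time solution by means of a priori estimates. The key point that makes this work even in the singular case $\delta=0$ is the interplay between the regularized estimates \eqref{growth-reg}--\eqref{loc-lip} and the weighted-space bounds; but I will first settle $\delta>0$ and then pass $\delta\to 0$.

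\textbf{Step 1: Local existence for $\delta>0$.} Fix $\delta>0$. On the space $X_\tau:=\mathcal C([0,\tau];H)$, define the map $\Phi$ sending $u$ to the right-hand side of the mild formulation. Using the isometry property of $S_{\sqrt\epsilon B}(t,r)$ on $H$ (it is a composition of the free Schr\"odinger group, unitary on $L^2$, hence the stochastic flow preserves the $L^2$-norm pathwise), together with \eqref{loc-lip} and the bound \eqref{growth} with $m=0$, one gets for $u,w$ in a ball of $X_\tau$
\[
\|\Phi(u)(t)-\Phi(w)(t)\|\le \int_0^t\bigl(|\lambda|(|\log\delta|+2)+C(V)R^2+\alpha_1\bigr)\|u(r)-w(r)\|\,dr,
\]
which yields a contraction for $\tau$ small (depending on $\delta$ through $|\log\delta|$, on $R$, $\lambda$, $\alpha_1$, $V$). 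A similar computation with \eqref{growth-reg} controls $\|\Phi(u)\|$ so that $\Phi$ maps the ball into itself. This produces a unique local mild solution $u^{\epsilon,\delta}\in\mathcal C([0,\tau_\delta];H)$, a.s.

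\textbf{Step 2: Pathwise a priori estimates and globalization.} To extend to $[0,T]$ I need the $H^1$ and $L^2_\alpha$ bounds that are uniform in $\tau$ (and ultimately in $\delta$). For the $L^2$-norm, apply It\^o's formula to $\|u^{\epsilon,\delta}\|^2$; the Stratonovich dispersion term contributes nothing since $S_{\sqrt\epsilon B}$ is an $L^2$-isometry, the logarithmic term contributes $0$ by the anti-symmetry $\<\bi\lambda f_\delta(|u|^2)u,u\>=0$, the potential term likewise ($\<\bi V[u]u,u\>=0$ since $V$ is real-valued), leaving $\frac{d}{dt}\|u^{\epsilon,\delta}\|^2=-2\alpha_1\|u^{\epsilon,\delta}\|^2$, so $\|u^{\epsilon,\delta}(t)\|\le\|u_0\|$ for all $t$, pathwise. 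For the $H^1$-bound one differentiates in space; the dispersion group commutes with $\nabla$, so again there is no stochastic contribution in the energy identity for $\|\nabla u^{\epsilon,\delta}\|^2$, and one must estimate $\<\nabla(\bi\lambda f_\delta(|u|^2)u),\nabla u\>$ and $\<\nabla(\bi V[u]u),\nabla u\>$. The latter is bounded by \eqref{growth} with $m=1$ and the already-controlled $\|u^{\epsilon,\delta}\|\le\|u_0\|$. The former is the delicate term; using that $f_\delta'(r)=\frac{1}{\delta+r}-\frac{\delta}{1+\delta r}$ has a sign structure one obtains, after the computation in \cite{CH23}, a bound of the form $C(\lambda)\|\nabla u^{\epsilon,\delta}\|^2$ independent of $\delta$, giving via Gr\"onwall $\|\nabla u^{\epsilon,\delta}(t)\|\le\|\nabla u_0\|e^{C(\lambda,V,\|u_0\|)t}$, again pathwise. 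Finally the weighted norm $\|u^{\epsilon,\delta}\|_{L^2_\alpha}^2$: here $\<(1+|x|^2)^\alpha \bi\Delta u,u\>$ does not vanish, but its real part reduces, after integration by parts, to terms controlled by $\|u^{\epsilon,\delta}\|\,\|\nabla u^{\epsilon,\delta}\|$ and $\|u^{\epsilon,\delta}\|_{L^2_{\alpha-1}}\|\nabla u^{\epsilon,\delta}\|$ (for $\alpha\le 1$), while the logarithmic and potential terms again drop out by anti-symmetry; the stochastic contribution in the It\^o expansion of $\|u^{\epsilon,\delta}\|_{L^2_\alpha}^2$ is a martingale plus a quadratic-variation correction bounded by $\epsilon$ times $\|u^{\epsilon,\delta}\|_{L^2_{\alpha}}\|\nabla u^{\epsilon,\delta}\|$-type quantities. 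Combining with the $H^1$-bound and Gr\"onwall gives a bound on $\E[\sup_{t\le T}\|u^{\epsilon,\delta}(t)\|_{L^2_\alpha}^p]$ of the stated form. Since the local existence time $\tau_\delta$ depends only on the size of the solution, and that size is controlled a priori on all of $[0,T]$, the standard continuation argument yields a global solution on $[0,T]$.

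\textbf{Step 3: The case $\delta=0$.} Let $u^{\epsilon,\delta}$, $\delta>0$, be the solutions just built. The bounds \eqref{moment-x1} are uniform in $\delta$ (the only $\delta$-dependence, through $|\log\delta|$ in \eqref{growth-reg}--\eqref{loc-lip}, entered only the \emph{local} existence time and the $L^\infty_tL^2_x$ bound via the antisymmetric term, where it cancels). Using \eqref{lip-reg-var}, which is $\delta$-uniform, one shows $(u^{\epsilon,\delta})_\delta$ is Cauchy in $L^2(\Omega;\mathcal C([0,T];H))$: for $\delta,\delta'\to 0$ estimate $\|u^{\epsilon,\delta}-u^{\epsilon,\delta'}\|^2$ by It\^o, the dispersion and potential terms giving controllable contributions and the difference of the nonlinear terms handled by adding and subtracting $f_\delta(|u^{\epsilon,\delta'}|^2)u^{\epsilon,\delta'}$, with $|f_\delta(r)-f_{\delta'}(r)|\to 0$ for $r>0$ and the bound \eqref{lip-reg-var} absorbing the rest; one also needs that $f_\delta(|u|^2)u\to\log(|u|^2)u$ in $H$ along the solution, which follows from the a priori $L^2_\alpha$-bound controlling the contribution near the vacuum set via \eqref{wei-sob-ine} (this is the mechanism that tames the logarithmic singularity). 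The limit $u^{\epsilon}:=u^{\epsilon,0}$ is then a mild solution of \eqref{slogse}, inherits \eqref{moment-x1} by lower semicontinuity, and uniqueness for $\delta=0$ follows from \eqref{lip-reg-var} and a Gr\"onwall argument on $\|u^{\epsilon}-\tilde u^{\epsilon}\|^2$ exactly as in Step 1 but now $\delta$-free.

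\textbf{Main obstacle.} The genuinely delicate point is the $\delta$-uniform $H^1$ (and $L^2_\alpha$) a priori estimate together with the convergence of the regularized nonlinearity as $\delta\to 0$: the naive Lipschitz constant $|\lambda|(|\log\delta|+2)$ blows up, so one cannot get $\delta$-uniformity from \eqref{loc-lip} alone. The resolution is to exploit the \emph{structure} of $f_\delta$ — specifically the cancellation $\Re\<\bi f_\delta(|u|^2)u,\cdot\>$ in the energy identities and the monotonicity-type inequality \eqref{lip-reg-var} — rather than its Lipschitz norm, and to use the weighted Sobolev embedding \eqref{wei-sob-ine} to control the solution in a neighborhood of the vacuum. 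This is precisely where the functional framework of \cite{CG18,CH23} and the scaling relation \eqref{reg-log-rand} (which lets one transfer all of these $\delta>0$ estimates between $u^{\epsilon,\delta}$ and $v^\delta$ without loss) do the essential work.
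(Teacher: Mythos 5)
Your proposal is correct and follows essentially the same route as the paper: regularize with $f_\delta$, derive $\delta$-uniform $H^1$ and $L^2_\alpha$ bounds from the skew-symmetric structure (the Stratonovich dispersion and logarithmic/potential terms vanishing or contributing only $C|\lambda|\|\nabla u\|^2$), and then show the family is Cauchy as $\delta\to 0$ via \eqref{lip-reg-var} together with the weighted Sobolev embedding \eqref{wei-sob-ine} and the Gagliardo--Nirenberg inequality \eqref{GN-ine}. The only cosmetic differences are that the paper performs the estimates on the spatially rescaled equation \eqref{reg-log-rand} for $v^\delta$ and transfers them back through the scaling identity, and it cites \cite{CH23} for the $\delta>0$ well-posedness instead of spelling out the fixed-point and continuation argument you sketch.
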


\begin{proof}
Since $v^{\delta}(t,\epsilon^{\frac 14}x):=u^{\epsilon,\delta}(t,x),$ it suffices to prove the well-posedness of \eqref{reg-log-rand}. 
The proof combines the following two steps. 

\textbf{Step 1:} 
We first prove the well-posedness of \eqref{reg-log-rand}
for $\delta>0.$
Making use of the properties \eqref{growth}-\eqref{first-order}, \eqref{growth-reg} and \eqref{loc-lip}, standard procedures in \cite[section 2]{CH23} lead to the well-posedness of \eqref{reg-log-rand} in $ L^p(\Omega; \mathcal C([0,T];H))$ for $T>0$ and $p\ge 1$. 
Next we provide several useful uniform a priori estimates of $v^{\delta}$. 
To apply the It\^o formula rigorously, one needs to use suitable approximation procedures as in \cite{CH23}. Here we omit these tedious and standard arguments. 

From the It\^o formula and the chain rule it follows that 
\begin{align}\label{mass-con}
    \|v^{\delta}(t)\|=e^{-\alpha_1 t}\|v^{\delta}(0)\|,\; a.s.
\end{align}
In order to study the well-posedness for the case of $\delta=0$, we  consider the moment estimates under the $H^1$-norm and the weighted Sobolev {$L_{\alpha}^2$-norm} with $\alpha\in [0,1]$.
By the It\^o formula and the integration by parts,  it holds that  
{\small
\begin{align}\label{ene-evo}
    d \frac 12\|\nabla v^{\delta}(t)\|^2
    =&-\<\bi \Delta v^{\delta}(t),\Delta v^{\delta}(t)\> \circ dB(t)+\<\lambda \bi \log(|v^{\delta}(t)|^2)\nabla v^{\delta}(t),\nabla v^{\delta}(t)\>dt\\\nonumber 
    &+2\<\lambda \bi \frac {\Re(\bar v^{\delta}(t)\nabla v^{\delta}(t))}{|v^{\delta}(t)|^2} v^{\delta}(t), \nabla v^{\delta}(t)\>dt+\<\bi V([v^{\delta}(t)])\nabla v^{\delta}(t), \nabla v^{\delta}(t)\>dt\\\nonumber 
    &+\<\bi \nabla  V([v^{\delta}(t)]) v^{\delta}(t), \nabla v^{\delta}(t)\>dt
    -\alpha_1 \|\nabla v^{\delta}(t)\|^2dt.
\end{align}}Using the skew-symmetric property of the complex inner product, and applying
H\"older's, Young's and Gronwall's inequalities, together with \eqref{growth}, we obtain that for any $p\ge 1$,
\begin{align}\label{pri-h1}
  \E[\sup_{t\in [0,T]}\|\nabla v^{\delta}(t)\|^{2p}] 
  &\le \exp\Big(C(V,\lambda,\alpha_1,T,\|u_0\|)p\Big)\E[\|\nabla v^{\delta}(0)\|^{2p}].
\end{align}
By applying the It\^o's formula to the weighted Sobolev norm, we obtain that 
\begin{align*}
    d\frac 12\|v^{\delta}(t)\|_{L_\alpha^2}^2
    &=\<\bi \Delta v^{\delta}(t), (1+|x|^2)^{\alpha} v^{\delta}(t)\> dB(t)
    -\frac 12 \<(1+|x|^{2})^{\alpha} v^{\delta}(t),\Delta^2 v^{\delta}(t)\>dt\\
    &+\frac 12 \<(1+|x|^{2})^{\alpha} \Delta v^{\delta}(t),\Delta v^{\delta}(t)\>dt-\alpha_1 \| v^{\delta}(t)\|_{L_\alpha^2}^2dt.
\end{align*}
To proceed, recall that $\Delta u=\sum_{i=1}^d \partial_{x_i}^2 u$ and $\Delta^2 u=\sum_{i,j=1}^d \partial_{x_i}^2\partial_{x_j}^2 u.$
By using integration by parts, 
{\small
\begin{align*}
   &\quad-\frac 12 \<(1+|x|^2)^{\alpha} u,\sum_{i,j=1}^d \partial_{x_i}^2\partial_{x_j}^2 u\>=-\frac 12 \sum_{i,j=1}^d \<\partial_{x_j}^2 [(1+|x|^2)^{\alpha} u],\partial_{x_i}^2 u\>\\
   &=-\frac 12 \sum_{i,j=1}^d \<\partial_{x_j}[(1+|x|^2)^{\alpha-1}2\alpha x_j  u+(1+|x|^2)^{\alpha}\partial_{x_j}u],\partial_{x_i}^2 u\>\\
      &=-\frac 12 \sum_{i,j=1}^d \<(1+|x|^2)^{\alpha-1}2\alpha   u+
      (1+|x|^2)^{\alpha-1}2\alpha x_j  \partial_{x_j}u(t),\partial_{x_i}^2 u\>\\
      &\quad-\frac 12 \sum_{i,j=1}^d 
      \<(1+|x|^2)^{\alpha-1}2\alpha x_j \partial_{x_j}u+(1+|x|^2)^{\alpha}\partial_{x_j}^2u,\partial_{x_i}^2 u\>\\
      &=-\frac 12 \sum_{i,j=1}^d \<(1+|x|^2)^{\alpha-1}2\alpha   u,\partial_{x_i}^2 u\>-\sum_{i,j=1}^d 
      \<(1+|x|^2)^{\alpha-1}2\alpha x_j \partial_{x_j}u,\partial_{x_i}^2 u\>\\
      &\quad-\frac 12 \sum_{i,j=1}^d \<(1+|x|^2)^{\alpha}\partial_{x_j}^2u,\partial_{x_i}^2 u\>.
\end{align*}
}
Notice that  
 for $j\neq i$, it holds that
\begin{align*}
    &- 
      \<(1+|x|^2)^{\alpha-1}2\alpha x_j \partial_{x_j}u,\partial_{x_i}^2 u\>\\
      =&\<(1+|x|^2)^{\alpha-1}2\alpha x_j \partial_{x_j}\partial_{x_i}u,\partial_{x_i} u\>
     +
      \<(1+|x|^2)^{\alpha-2} 2(\alpha-1) x_i 2\alpha x_j \partial_{x_j}u,\partial_{x_i} u\>\\
      =&-\<(1+|x|^2)^{\alpha-1}2\alpha x_j \partial_{x_j}\partial_{x_i}\partial_{x_i}u, u\>
     -\<(1+|x|^2)^{\alpha-2}2(\alpha-1)x_ i2\alpha x_j \partial_{x_j}\partial_{x_i}u, u\>\\
      &+
      \<(1+|x|^2)^{\alpha-2} 2(\alpha-1) x_i 2\alpha x_j \partial_{x_j}u,\partial_{x_i} u\>. 
\end{align*}
Using the integration by parts again, we further obtain 
\begin{align*}
  &-\< \partial_{x_j}\partial_{x_i}\partial_{x_i}u, (1+|x|^2)^{\alpha-1}2\alpha x_j u\>\\
  =&\<\partial_{x_i}\partial_{x_i}u, (1+|x|^2)^{\alpha-1}2\alpha x_j \partial_{x_j} u\>+\<\partial_{x_i}\partial_{x_i}u, (1+|x|^2)^{\alpha-1}2\alpha  u\>\\
  &+\<\partial_{x_i}\partial_{x_i}u, (1+|x|^2)^{\alpha-2}2(\alpha-1)x_j 2\alpha x_j u\>.
\end{align*}
As a consequence, 
\begin{align*}
   &- 2
      \<(1+|x|^2)^{\alpha-1}2\alpha x_j \partial_{x_j}u,\partial_{x_i}^2 u\>\\
      =&
      \<\partial_{x_i}\partial_{x_i}u, (1+|x|^2)^{\alpha-1}2\alpha  u\>+\<\partial_{x_i}\partial_{x_i}u, (1+|x|^2)^{\alpha-2}2(\alpha-1)x_j 2\alpha x_j u\>\\
  &-\<(1+|x|^2)^{\alpha-2}2(\alpha-1)x_ i2\alpha x_j \partial_{x_j}\partial_{x_i}u, u\>\\
  &+
      \<(1+|x|^2)^{\alpha-2} 2(\alpha-1) x_i 2\alpha x_j \partial_{x_j}u,\partial_{x_i} u\>\\
    =&-\<\partial_{x_i} u,(1+|x|^2)^{\alpha-1}2\alpha \partial_{x_i} u+(1+|x|^2)^{\alpha-2}2(\alpha-1)2\alpha x_i u\>\\
    &-\<\partial_{x_i}u, (1+|x|^2)^{\alpha-2}2(\alpha-1)2\alpha x_j^2 \partial_{x_i} u\>\\
    &-\<\partial_{x_i}u, (1+|x|^2)^{\alpha-3}2(\alpha-2)2(\alpha-1)2\alpha x_j^2x_i  u\>\\
    &+\<\partial_{x_j}u, (1+|x|^2)^{\alpha-2}2(\alpha-1)2\alpha x_ix_j \partial_{x_i}u\>\\
    &+\<\partial_{x_j}u, (1+|x|^2)^{\alpha-3}2(\alpha-2)2(\alpha-1)2\alpha x_ix_jx_i u\>\\
    &+\<\partial_{x_j}u, (1+|x|^2)^{\alpha-2}2(\alpha-1)2\alpha x_j u\>\\
    &+ \<(1+|x|^2)^{\alpha-2} 2(\alpha-1) x_i 2\alpha x_j \partial_{x_j}u,\partial_{x_i} u\>.
\end{align*}
Notice that $(1+|x|^2)^{-1}|x|^{\zeta}\le C_{\zeta}$ for $\zeta\in [0,2].$ It holds that for $j\neq i$,
\begin{align}\label{int-part}
    |\<(1+|x|^2)^{\alpha-1}2\alpha x_j \partial_{x_j}u,\partial_{x_i}^2 u\>|
    &\le C(\|\partial_{x_i} u\|^2+\|u\|^2+\|\partial_{x_j}u\|^2).
\end{align}
Similarly,
one can verify \eqref{int-part}  for $j=i$.
By the BDG inequality and Gronwall's inequality, as well as \eqref{pri-h1}, we obtain that 
{\small
\begin{align}\label{pri-la2}
\E \Big[\sup_{t\in [0,T]}\|v^{\delta}(t)\|_{L_{\alpha}^2}^{2p}\Big]\le C_1(T,p)
\exp\Big(C(V,\lambda,\alpha_1,T,\|u_0\|)p\Big)\Big(\|v^{\delta}(0)\|_{H^1}^{2p}+\|v^{\delta}(0)\|_{L_{\alpha}^2}^{2p}\Big),
\end{align}
}
where $C_1(T,p)$ depends on $p$ quadratically.  The estimates \eqref{pri-h1} and \eqref{pri-la2} lead to \eqref{moment-x1} for $\delta>0$ by the spatial scaling property.

\textbf{Step 2:} 
We show that the limit of $v^{\delta}$ as $\delta \to 0$ is the unique mild solution of \eqref{slogse}. Notice that 
 $v^{\delta}(0,x)=u_0(\epsilon^{-\frac 14}x)$ is independent of $\delta.$ 
We take arbitrary small positive parameters $\delta_1,\delta_2>0$ such that $\delta_1\ge \delta_2$. 
Then considering the evolution of $\frac 12\|v^{\delta_1}(t)-v^{\delta_2}(t)\|^2$, it holds that
{\small
\begin{align*}
&\frac 12 \|v^{\delta_1}(t)-v^{\delta_2}(t)\|^2\\
=&\frac 12 \|v^{\delta_1}(0)-v^{\delta_2}(0)\|^2+\int_{0}^t\<v^{\delta_1}(s)-v^{\delta_2}(s), \bi \Delta (v^{\delta_1}(s)-v^{\delta_2}(s))\>\circ \sqrt{\epsilon} dB(s)\\
&+\int_0^t \<v^{\delta_1}(s)-v^{\delta_2}(s),\lambda \bi f_{\delta_1}(|v^{\delta_1}(s)|^2)v^{\delta_1}(s)-\lambda \bi f_{\delta_2}(|v^{\delta_2}(s)|^2)v^{\delta_2}(s)\> ds\\
&+\int_0^t \<v^{\delta_1}(s)-v^{\delta_2}(s),\bi V[v^{\delta_1}(s)]v^{\delta_1}(s)-\bi V[v^{\delta_2}(s)]v^{\delta_2}(s)\> ds\\
&-\int_0^t\alpha_1 \|v^{\delta_1}(s)-v^{\delta_2}(s)\|^2 ds\\
=&:\frac 12\|v^{\delta_1}(0)-v^{\delta_2}(0)\|^2 +I_1(t)+I_2(t)+I_3(t)+I_4(t).
\end{align*}
}Since $B(\cdot)$ is independent of the spatial variable, it follows that $I_1=0.$
The property of the logarithmic function and \eqref{lip-reg-var}  yield that 
{\small
\begin{align*}
	|I_2(t)|\le& C(\lambda)\int_0^t\|v^{\delta_1}(s)-v^{\delta_2}(s)\|^2ds\\
	&+\int_0^t|\<v^{\delta_1}(s)-v^{\delta_2}(s),\bi \lambda(f_{\delta_1}(|v^{\delta_1}(s)|^2)-f_{\delta_2}(|v^{\delta_2}(s)|^2))u^{\delta_2}(s) \>|ds\\
	\le&  C(\lambda)\int_0^t\|v^{\delta_1}(s)-v^{\delta_2}(s)\|^2ds\\
	&+|\lambda|\int_0^t\int_{\mathcal O} |v^{\delta_1}(s)-v^{\delta_2}(s)| |\log(\delta_1+|v^{\delta_1}(s)|^2)-\log(\delta_2+|v^{\delta_2}(s)|^2)||v^{\delta_2}(s)| dxds\\
	&+|\lambda|\int_0^t\int_{\mathcal O} |v^{\delta_1}(s)-v^{\delta_2}(s)| |\log(1+\delta_1|v^{\delta_1}(s)|^2)-\log(1+\delta_2|v^{\delta_2}(s)|^2)||v^{\delta_2}(s)| dxds\\
	\le& C(\lambda)\int_0^t\|v^{\delta_1}(s)-v^{\delta_2}(s)\|^2ds\\
	&+|\lambda|\int_0^t\int_{\mathcal O} |v^{\delta_1}(s)-v^{\delta_2}(s)| \log(1+\frac {(\delta_1-\delta_2)}{\delta_2+|v^{\delta}(s)|^2})|v^{\delta_2}(s)| dxds\\
	&+|\lambda|\int_0^t\int_{\mathcal O} |v^{\delta_1}(s)-v^{\delta_2}(s)| |\log(1+\frac {(\delta_1-\delta_2)|v^{\delta_2}|^2}{1+\delta_2|v^{\delta_2}(s)|^2})||v^{\delta_2}(s)| dxds\\
	=&:  C(\lambda)\int_0^t\|v^{\delta_1}(s)-v^{\delta_2}(s)\|^2ds+I_{21}+I_{22}.
\end{align*}}Applying Young's inequality, \eqref{wei-sob-ine} and \eqref{GN-ine}, it follows that for $\eta\in [0,\frac {2\alpha}{2\alpha+d})$, $\frac {\eta'd}{2\eta'+2}\in [0,1)$ and $\alpha\in (0,1],$
{\small
\begin{align*}
	I_{21}+I_{22}\le&  \int_0^{t} C|\lambda| \|v^{\delta_1}(s)-v^{\delta_2}(s)\|^2ds+ \int_0^{t} C|\lambda| \delta_1^{\eta}\|v^{\delta_2}(s)\|_{L^{2-2\eta}}^{2-2\eta}ds\\
	&+\int_0^{t} C |\lambda|\delta_1^{\eta'} \|v^{\delta_2}(s)\|_{L^{2+2\eta'}}^{2+2\eta'}ds\\
 \le& \int_0^{t} C|\lambda| \|v^{\delta_1}(s)-v^{\delta_2}(s)\|^2ds+\int_0^t C\delta_1^{\eta} \|v^{\delta_2}(s)\|_{L_{\alpha}^2}^{\frac {d\eta}{\alpha}} \|v^{\delta_2}(s)\|^{2-2\eta-\frac {d\eta}{\alpha}}ds\\
 &+\int_0^t C\delta_1^{\eta'} \|v^{\delta_2}(s)\|^{d\eta'} \|\nabla v^{\delta_2}(s)\|^{2\eta'+2-d\eta'}ds.
\end{align*}}Using 
\eqref{first-order} yields that 
\begin{align*}
	|I_3(t)|=C(m,V)\int_0^t(\|v^{\delta_1}(s)\|^2+\|v^{\delta_2}(s)\|^2)\|v^{\delta_1}(s)-v^{\delta_2}(s)\|^2 ds,
\end{align*}
which, together with 
the mass evolution law \eqref{mass-con}, implies that 
\begin{align*}
	|I_3(t)|\le C(m,V)(1+\|u_0\|^2)\int_0^t\|v^{\delta_1}(s)-v^{\delta_2}(s)\|^2 ds.
\end{align*} 
Based on the estimates of $I_1$-$I_3$ and using Gronwall's inequality, we have
\begin{align*}
	\|v^{\delta_1}-v^{\delta_2}\|_{L^{2p}(\Omega;\mathcal C([0,T];H))}&\le C({V,\lambda,T,p},\alpha_1,\|u_0\|)(\delta_1^{\frac \eta 2}+\delta_1^{\frac {\eta'}2}).
\end{align*}
Thus for any $\delta_n \to 0,$  $\{v^{\delta_n}\}_{n}$ forms a Cauchy sequence in $L^{2p}(\Omega;\mathcal C([0,T];H)).$  Then  standard arguments as in \cite[section 4]{CH23} show that there exists a unique limit of the Cauchy sequence $u^{\delta}$ as $\delta\to 0$, which is also the mild solution of \eqref{reg-log-rand} with $\delta=0$.
By the spatial scaling property between $v^{\delta}$ and $u^{\epsilon,\delta}$, we get \eqref{moment-x1} for $\delta=0$ and complete the proof.
\end{proof}

By an interpolation argument, one can also prove that for any $\delta_n \to 0,$  $\{v^{\delta_n}\}_{n}$ forms a Cauchy sequence in $L^{2p}(\Omega;\mathcal C([0,T];L^2_{\alpha}))\cap L^{2p}(\Omega;\mathcal C([0,T$ $]; H^{\bs}))$ with $\bs\in (0,1)$ and $\alpha\in (0,1)$.

\section{Large deviation principle}
\label{sec-ldp}
Thanks to Schilder's theorem (see e.g., \cite{DZ98lp}), it is known that $\sqrt{\epsilon}B(\cdot)$ satisfies the large deviation principle (LDP) with a good rate function 
$$I^W(g)=\begin{cases}
\frac {\|g\|_{\mathcal H_P}^2} 2, \qquad &g\in \mathcal H_P,\\
+\infty, \qquad  &g\notin \mathcal H_P,
\end{cases}$$
where $\mathcal H_P$ is the Cameron--Martin space of the standard Brownian motion defined by  $\{g\in W^{1,2}([0,T];\mathbb R), g(0)=0\}$ equipped with the norm $\|g\|_{\mathcal H_P}:=\sqrt{\int_0^T |h|^2dt},$ where $h=\dot g$. For the considered models, we have the following LDP result. 

\begin{prop}\label{prop-ldp}
Let $T>0,$ $\delta\ge 0$ and $\alpha \in [0,1].$
The family  $\{u^{\epsilon,\delta}\}_{\epsilon>0}$ satisfies an LDP of speed $\epsilon$
and good rate function \begin{align*}
  I_{u_0}^{\delta,T}(z)= \frac 12 \inf_{ L_{u_0}^{\delta} (g)=z} \|g\|_{\mathcal H_P}^2, \; z\in \mathcal C([0,T]; H),
\end{align*}
where $L_{u_0}^{\delta}$ is defined by the solution operator of the skeleton equation,
\begin{align*}
    L_{u_0}^{\delta}(g)(t)&=S_{g}(t)u_0+\int_0^t S_g(t,s)\bi\lambda f_{\delta}(|L_{u_0}^{\delta}(g)(s)|^2)L_{u_0}^{\delta}(g)(s)ds\\
    &+\int_0^t S_g(t,s) \bi V[L_{u_0}^{\delta}(g)(s)] L_{u_0}^{\delta}(g)(s)ds-\int_0^t S_g(t,s) \alpha_1 L_{u_0}^{\delta}(g)(s) ds
\end{align*}
with $S_g(t,s)=\exp(\bi\Delta(g(t)-g(s))).$     
\end{prop}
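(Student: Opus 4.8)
The natural route is the Budhiraja–Dupuis weak-convergence approach to large deviations, which replaces the verification of the classical Laplace principle with the study of controlled equations. Since the driving noise here is a one-dimensional Brownian motion, the randomness enters only through the random flow $S_{\sqrt{\epsilon}B}(t,s)=e^{\mathbf{i}\Delta\sqrt{\epsilon}(B(t)-B(s))}$, and the skeleton equation is obtained by replacing $\sqrt{\epsilon}B$ with a deterministic control $g\in\mathcal H_P$. First I would record that the solution map of \eqref{reg-log-rands} is, by Theorem \ref{tm-1} together with the strong convergence in Step 2 of its proof, a measurable map $\mathcal G^{\epsilon,\delta}:\mathcal C([0,T];\mathbb R)\to\mathcal C([0,T];H)$ sending the Brownian path to $u^{\epsilon,\delta}$; I would then show that $u^{\epsilon,\delta}=\mathcal G^{\epsilon,\delta}(\sqrt{\epsilon}B)$ and that the solution of the controlled equation (the SDE with an extra drift $\bi\Delta u\,\dot{h}_\epsilon\,dt$ coming from the Girsanov shift by $h_\epsilon\in\mathcal A_M$, the bounded stochastic controls) is $\mathcal G^{\epsilon,\delta}(\sqrt{\epsilon}B+\int_0^\cdot h_\epsilon)$. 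The rate function is then automatically $I_{u_0}^{\delta,T}(z)=\tfrac12\inf\{\|g\|_{\mathcal H_P}^2: L_{u_0}^\delta(g)=z\}$ with $L_{u_0}^\delta=\mathcal G^{0,\delta}$, provided the two Budhiraja–Dupuis conditions hold.

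**Key steps.** (i) Well-posedness and a priori bounds for the skeleton equation $L_{u_0}^\delta(g)$ for $g\in\mathcal H_P$: since $S_g(t,s)$ is a unitary group on $H$ and on $H^1\cap L_\alpha^2$ up to constants depending on $\|g\|_{\mathcal H_P}$, the same fixed-point and energy arguments used in Theorem \ref{tm-1} (using \eqref{growth}–\eqref{loc-lip}, \eqref{wei-sob-ine}, \eqref{GN-ine}) give a unique $L_{u_0}^\delta(g)\in\mathcal C([0,T];H^1\cap L_\alpha^2)$ with bounds uniform over $\|g\|_{\mathcal H_P}\le M$. (ii) Compactness of level sets / statement (a): if $g_n\rightharpoonup g$ weakly in $\mathcal H_P$ along a sequence with $\sup_n\|g_n\|_{\mathcal H_P}\le M$, then $L_{u_0}^\delta(g_n)\to L_{u_0}^\delta(g)$ in $\mathcal C([0,T];H)$; the point is that $S_{g_n}(t,s)u\to S_g(t,s)u$ because $g_n\to g$ uniformly (weak $W^{1,2}$ convergence implies uniform convergence on $[0,T]$ by compact embedding), and the nonlinear terms are handled by the Lipschitz estimates \eqref{lip-reg-var}, \eqref{loc-lip}, \eqref{first-order} plus Gronwall, with the $\delta=0$ logarithmic case obtained by the same $\delta_1,\delta_2$ splitting as in Step 2 of Theorem \ref{tm-1} together with the uniform $H^1\cap L_\alpha^2$ bounds. (iii) Statement (b): for controls $h_\epsilon\in\mathcal A_M$ with $h_\epsilon\to h$ in distribution as $\mathcal H_P$-valued random variables (weak topology), the controlled solutions $\mathcal G^{\epsilon,\delta}(\sqrt{\epsilon}B+\int_0^\cdot h_\epsilon)$ converge in distribution in $\mathcal C([0,T];H)$ to $L_{u_0}^\delta(h)$; here one estimates the difference between the controlled SDE solution and $L_{u_0}^\delta(h_\epsilon)$ — the stochastic flow $S_{\sqrt{\epsilon}B+\int h_\epsilon}$ differs from $S_{\int h_\epsilon}$ by a term of size $O(\sqrt{\epsilon}\sup|B|)$ which vanishes, and the Itô correction from the Stratonovich-to-Itô conversion is also $O(\epsilon)$ — and then uses the continuity from step (ii). (iv) Finally, pass to $\delta\to0$: since $v^{\delta_n}$ is Cauchy in $L^{2p}(\Omega;\mathcal C([0,T];H))$ uniformly (Step 2 of Theorem \ref{tm-1}), one verifies the exponential equivalence / uses Varadhan's contraction as indicated in the introduction to transfer the LDP from $\delta>0$ to $\delta=0$ with the rate function $I_{u_0}^{0,T}$.

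**Main obstacle.** The delicate point is controlling the logarithmic nonlinearity uniformly in the weak-convergence arguments when $\delta=0$, because $\log|z|^2$ is unbounded near the vacuum and only the \emph{regularized} estimates \eqref{lip-reg-var}–\eqref{loc-lip} are available with constants blowing up as $|\log\delta|$. The resolution is to avoid ever working with $\delta=0$ directly at the LDP level: establish the full LDP (conditions (a) and (b)) for each fixed $\delta>0$, where all nonlinear estimates are genuinely Lipschitz, and only at the very end let $\delta\to0$ using the uniform strong $L^{2p}(\Omega;\mathcal C([0,T];H))$ convergence rate $O(\delta^{\eta/2}+\delta^{\eta'/2})$ from Theorem \ref{tm-1} (which, being deterministic in $\delta$ and uniform over bounded controls via the weighted-Sobolev/Gagliardo–Nirenberg bounds, also controls the skeleton maps $L_{u_0}^\delta(g)\to L_{u_0}^0(g)$ uniformly on $\{\|g\|_{\mathcal H_P}\le M\}$). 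A secondary technical nuisance is the Stratonovich integral: one must carefully write \eqref{reg-log-rands} in Itô form, note that because $B$ is spatial-constant the correction term is $\tfrac{\epsilon}{2}\mathbf{i}^2\Delta^2u\,dt$ formally, but in the mild/propagator formulation $S_{\sqrt\epsilon B}$ already encodes the Stratonovich flow exactly, so no correction appears in the mild equation and the skeleton is simply the $\sqrt\epsilon B\mapsto g$ substitution — this should be stated cleanly to justify the form of $L_{u_0}^\delta$.
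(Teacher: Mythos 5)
Your proposal is sound, but for the fixed-$\delta$ part it takes a genuinely different route from the paper. The paper does not use the Budhiraja--Dupuis weak-convergence machinery at all: because the noise is a single real Brownian motion entering only through the commuting propagator $S_{\sqrt\epsilon B}$, the solution map $g\mapsto L_{u_0}^{\delta}(g)$ is shown (Lemma~\ref{lm-con-ske}) to be continuous from $\mathcal C_0([0,T];\mathbb R)$ with the \emph{sup norm} into $\mathcal C([0,T];H)$, with a Lipschitz-type bound of constant $C(V,\lambda,|\log\delta|,T,\alpha_1)$, so the LDP for $\delta>0$ follows at once from Schilder's theorem and the classical contraction principle --- no controlled equations, no conditions (a)/(b), no weak convergence of controls. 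Your BD route would also work, and is the more robust/general tool, but here it is heavier than necessary; conversely, the paper's argument exploits a structural feature (pathwise continuity in the driver) that is rare for SPDEs and worth stating explicitly if you adopt it. For the passage $\delta\to 0$ your plan coincides with the paper's: establish that $u^{\epsilon,\delta}$ is an exponentially good approximation of $u^{\epsilon}$ (Lemma~\ref{lm-exp-good}) and that $L_{u_0}^{\delta}(g)\to L_{u_0}^{0}(g)$ uniformly over $\|g\|_{\mathcal H_P}\le R$ (condition \eqref{ex-good}), then invoke the approximation form of the contraction principle (\cite[Theorem 4.2.23]{DZ98lp}). One caveat on your phrase that the $O(\delta^{\eta/2}+\delta^{\eta'/2})$ rate is ``deterministic in $\delta$'': the Gronwall bound for $\|u^{\epsilon,\delta}-u^{\epsilon}\|^2$ still contains the random factor $\int_0^T(\|u^{\epsilon}\|_{L_\alpha^2}^{d\eta/\alpha}+\|\nabla u^{\epsilon}\|^{2\eta'+2-d\eta'})\,dt$, so exponential goodness is not a formal consequence of $L^{2p}$ convergence for fixed $p$; you must run Chebyshev with exponent $1/\epsilon$ and therefore need moment bounds of order $p\sim 1/\epsilon$ whose constants grow at most like $C^{p}$ (this is exactly why the paper tracks the $p$-dependence $\exp(Cp)$ in \eqref{pri-h1} and \eqref{pri-la2}). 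With that point made explicit, your argument closes.
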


\begin{proof}
    For the case that $\delta>0$, by the contraction principle, it suffices to prove the continuity of $L_{u_0}^{\delta}$, which is obtain in Lemma \ref{lm-con-ske}.  
    Notice that $u^{\epsilon,\delta}$ is proven to be an exponentially good approximation of $u^{\delta}$ in Lemma \ref{lm-exp-good}.
By \cite[Theorem 4.2.23]{DZ98lp}, to verify that $I^{0,T}_{u_0}$ the good rate function for \eqref{reg-log-rands} with $\delta=0$, 
it suffices to show that  
\begin{align}\label{ex-good}
\lim_{\delta \to 0} \sup_{\{g:\|g\|_{\mathcal H_P\le R}\}}\|L_{u_0}^{\delta}(g)-L_{u_0}^0(g)\|=0
\end{align}
for every positive $R<+\infty$. 
Since $g\in \mathcal H_P$, by similar arguments as in the proof of Theorem \ref{tm-1},  it is not hard to show that there exists a unique solution for  
\begin{align*}
  dL_{u_0}^{\delta}(g)(t)&=\bi \Delta L_{u_0}^{\delta}(g)(t)dg(t)+\bi \lambda f_{\delta}(|L_{u_0}^{\delta}(g)(t)|^2)L_{u_0}^{\delta}(g)(t)dt\\
  &\quad+\bi V[L_{u_0}^{\delta}(g)(t)]L_{u_0}^{\delta}(g)(t)dt-\alpha_1L_{u_0}^{\delta}(g)(t) dt, 
\end{align*}
satisfying  
\begin{align}\label{apri-lug}
   &\|L_{u_0}^{\delta}(g)(t)\|=e^{-\alpha_1 t}\|u_0\|,\; \|\nabla L_{u_0}^{\delta}(g)(t)\|\le C({V,\lambda,T},\alpha_1) \|\nabla u_0\|,\\\label{apri-lug1}
   &\|L_{u_0}^{\delta}(g)(t)\|_{L_{\alpha}^2}\le C({V,\lambda,T},\alpha_1)(\|u_0\|_{L_{\alpha}^2}+\| u_0\|_{H^1}\|g\|_{W^{1,2}([0,T];\mathbb R)}).
\end{align}
The properties \eqref{first-order} and \eqref{lip-reg-var}  yield that 
\begin{align*}
   &d\|L_{u_0}^{\delta}(g)(t)-L_{u_0}^{0}(g)(t)\|^2\\
   \le& 
     C(\lambda)\| (f_{\delta}(|L_{u_0}^{0}(g)(t)|^2)L_{u_0}^{0}(g)(t)-\log(|L_{u_0}^{0}(g)(t)|^2))L_{u_0}^{0}(g)(t)\|
     ^2 dt\\
   &+C({V,\lambda},\alpha_1)(1+\|L_{u_0}^{\delta}(g)(t)\|^{2}+\|L_{u_0}^{0}(g)(t)\|^{2})\|L_{u_0}^{\delta}(g)(t)-L_{u_0}^{0}(g)(t)\|^2dt.
\end{align*}
Combining the above estimates with \eqref{err-con} in the proof of Lemma \ref{lm-exp-good},  \eqref{ex-good} follows.
\end{proof}

\begin{lm}\label{lm-con-ske}
Let $T>0,$ $\delta\ge 0$, $\alpha \in [0,1]$ and $g\in \mathcal H_{P}$. The operator $L_{u_0}^{\delta}$ is continuous from $\mathcal C_0([0,T];\mathbb R)$ to $\mathcal C([0,T]; H).$
\end{lm}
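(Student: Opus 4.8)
The plan is to show sequential continuity: if $g_n \to g$ in $\mathcal C_0([0,T];\mathbb R)$, then $L_{u_0}^{\delta}(g_n) \to L_{u_0}^{\delta}(g)$ in $\mathcal C([0,T];H)$. First I would record the two elementary facts about the propagator $S_g(t,s) = \exp(\bi\Delta(g(t)-g(s)))$ that drive everything: it is a unitary group on $H$ (so $\|S_g(t,s)z\| = \|z\|$), and the map $g \mapsto S_g(\cdot,\cdot)$ is strongly continuous in the sense that $g_n \to g$ uniformly forces $S_{g_n}(t,s)z \to S_g(t,s)z$ in $H$, uniformly in $0\le s\le t\le T$, for each fixed $z \in H$. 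The latter follows from $\|(S_{g_n}(t,s) - S_g(t,s))z\| = \|(e^{\bi\Delta((g_n(t)-g_n(s))-(g(t)-g(s)))} - I)S_g(t,s)z\|$ together with strong continuity of the Schrödinger group $\tau \mapsto e^{\bi\Delta\tau}$ at $\tau = 0$; one upgrades pointwise-in-$z$ to uniform-on-compacts by a density/equicontinuity argument using that the relevant orbit $\{S_g(t,s)z\}$ is compact in $t,s$.

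Next I would split the difference $w_n(t) := L_{u_0}^{\delta}(g_n)(t) - L_{u_0}^{\delta}(g)(t)$ using the mild formulation. Writing $y_n = L_{u_0}^{\delta}(g_n)$, $y = L_{u_0}^{\delta}(g)$, and $N^{\delta}(z) = \bi\lambda f_{\delta}(|z|^2)z + \bi V[z]z - \alpha_1 z$, one has
\begin{align*}
w_n(t) = \big(S_{g_n}(t) - S_g(t)\big)u_0 &+ \int_0^t \big(S_{g_n}(t,s) - S_g(t,s)\big)N^{\delta}(y(s))\,ds\\
&+ \int_0^t S_{g_n}(t,s)\big(N^{\delta}(y_n(s)) - N^{\delta}(y(s))\big)\,ds.
\end{align*}
The first two terms, call them $R_n(t)$, are "remainders" that do not involve $w_n$: by the strong-continuity fact above and dominated convergence (the integrand is bounded in $H$ using \eqref{growth-reg}, \eqref{growth} and the a priori bound \eqref{apri-lug} on $\|y(s)\|$, which gives $\sup_s\|N^{\delta}(y(s))\| < \infty$), we get $\sup_{t\in[0,T]}\|R_n(t)\| \to 0$. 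For the third term I apply the unitarity of $S_{g_n}$ and the Lipschitz estimates: \eqref{loc-lip} controls the $f_{\delta}$ part by $|\lambda|(|\log\delta|+2)\|w_n(s)\|$ (this is where $\delta>0$ enters; for $\delta=0$ one uses instead \eqref{lip-reg-var} after taking the inner product with $w_n$, or the fact from Step 2 of Theorem \ref{tm-1}'s proof), while \eqref{first-order} together with the uniform mass bound $\|y_n(s)\| = \|y(s)\| = e^{-\alpha_1 s}\|u_0\|$ controls the $\bi V[\cdot]\cdot$ part by $C(1+\|u_0\|^2)\|w_n(s)\|$. Hence $\|w_n(t)\| \le \|R_n\|_{\infty} + C\int_0^t \|w_n(s)\|\,ds$, and Grönwall gives $\sup_{t\in[0,T]}\|w_n(t)\| \le \|R_n\|_{\infty}\,e^{CT} \to 0$.

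For the case $\delta = 0$, since $f_0 = \log$ is only locally Lipschitz away from the vacuum, the cleanest route is to argue the third term at the level of $\tfrac12\frac{d}{dt}\|w_n\|^2$ rather than in mild form: the Stratonovich/transport term $\langle w_n, \bi\Delta w_n\rangle\,dg_n$ does not literally vanish as it did in Step 2, so instead I would keep $L_{u_0}^0(g_n)$ in mild form for the propagator mismatch but pair the nonlinear difference against $w_n$ and invoke \eqref{lip-reg-var} to get $|\langle \bi\lambda(\log|y_n|^2 y_n - \log|y|^2 y), w_n\rangle| \le 4|\lambda|\|w_n\|^2$. Combined with the $V$-term bound this again closes via Grönwall. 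Alternatively — and this is probably the shortest write-up — one notes that $L^0_{u_0}(g)$ was already obtained in Theorem \ref{tm-1} (via the identical skeleton-equation reasoning) as the $\delta\to0$ limit of $L^{\delta}_{u_0}(g)$ \emph{uniformly} on bounded sets of $g$ (this is exactly \eqref{ex-good}), so continuity of $L^0_{u_0}$ follows from continuity of each $L^{\delta}_{u_0}$, $\delta>0$, plus a uniform (in $g$ on compacts) modulus of continuity — i.e. a standard "uniform limit of continuous maps is continuous" argument once one checks the convergence is locally uniform in $g$.

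The main obstacle is the strong continuity of $g \mapsto S_g$ uniformly in the time variables, i.e. controlling $\int_0^t (S_{g_n}(t,s) - S_g(t,s))N^{\delta}(y(s))\,ds$; this needs a genuine (though routine) argument because the Schrödinger group is only strongly, not norm, continuous, so one cannot simply pull out a $\sup$ over $s$ of an operator norm. The fix is to exploit that $s\mapsto N^{\delta}(y(s))$ takes values in a compact subset of $H$ (it is a continuous image of the compact set $y([0,T])\subset H$), on which strong convergence of the uniformly bounded operators $S_{g_n}(t,s)$ is uniform — this is the one place where the argument is more than bookkeeping.
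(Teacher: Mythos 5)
Your proposal is correct for $\delta>0$, but it handles the key step by a genuinely different route than the paper. The paper does not use a soft strong-continuity argument at all: it exploits the quantitative bound $\|(S_{g_1}(t)-S_{g_2}(t))z\|=\|(I-S_{g_2-g_1}(t))z\|\le C\|z\|_{H^1}|g_2(t)-g_1(t)|^{1/2}$ (proved on the Fourier side), applied not only to $u_0$ but to the whole Duhamel integrand: since the skeleton satisfies the a priori bounds \eqref{apri-lug}--\eqref{apri-lug1} and since $f_{\delta}$ (with an $|\log\delta|$ constant) and $V[\cdot]\cdot$ map $H^1$ into $H^1$, the propagator mismatch is measured in $\mathcal L(H^1;H)$ and pulled out of the time integral, giving an explicit modulus $\lesssim\|g_1-g_2\|_{\mathcal C([0,T];\mathbb R)}^{1/2}$ before the same Lipschitz/Gr\"onwall step that you also perform. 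Your route replaces this by the observation that $s\mapsto N^{\delta}(y(s))$ has compact range in $H$, on which strong convergence of the uniformly bounded propagators is uniform; this is sound, needs no $H^1$ bound on the nonlinearity, but yields only qualitative continuity, whereas the paper's argument produces a H\"older-type rate with explicit constants (which is also what it later leans on). One caveat about your $\delta=0$ discussion: in the pairing route the term that survives is not $\langle w_n,\bi\Delta w_n\rangle\,dg_n$ (that one vanishes by skew-symmetry) but the cross term $\langle w_n,\bi\Delta L_{u_0}^{0}(g)\rangle\,d(g_n-g)$, and controlling it would require $H^2$ bounds on the skeleton together with convergence of $\dot g_n$, which uniform convergence of $g_n$ does not provide; so that variant is doubtful. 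Your alternative route for $\delta=0$ (locally uniform limit of the continuous maps $L_{u_0}^{\delta}$ via \eqref{ex-good}) is the viable one and is in effect what the paper itself does: its written proof of the lemma carries constants depending on $|\log\delta|$, and the $\delta=0$ case of the LDP is handled in Proposition \ref{prop-ldp} through the exponentially good approximation rather than through continuity of $L_{u_0}^{0}$.
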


\begin{proof}
For any $g_1,g_2\in \mathcal H_P$, using the mild formulations of $L_{u_0}^{\delta}(g_1)$ and $L_{u_0}^{\delta}(g_2)$, it holds that 
\begin{align*}
    &\|L_{u_0}^{\delta}(g_1)(t)-L_{u_0}^{\delta}(g_2)(t)\|\le \|S_{g_1}(t)u_0-S_{g_2}(t)u_0\|
    \\
    &+\Big\|\int_0^t (S_{g_1}(t,s)-S_{g_2}(t,s))\Big(\bi\lambda f_{\delta}(|L_{u_0}^{\delta}(g_1)(s)|^2)L_{u_0}^{\delta}(g_1)(s)-\alpha_1 L_{u_0}^{\delta}(g_1)(s)\Big) ds\Big\|\\
    &+\Big\|\int_0^t (S_{g_1}(t,s)-S_{g_2}(t,s))\bi V[L_{u_0}^{\delta}(g_1)(s)]L_{u_0}^{\delta}(g_1)(s) ds\Big\|\\
    &+\Big\|\int_0^t S_{g_2}(t,s)\bi\lambda \Big(f_{\delta}(|L_{u_0}^{\delta}(g_1)(s)|^2) L_{u_0}^{\delta}(g_1)(s)-f_{\delta}(|L_{u_0}^{\delta}(g_2)(s)|^2)L_{u_0}^{\delta}(g_2)(s)\\
    &\quad +\alpha_1 L_{u_0}^{\delta}(g_2)(s) -\alpha_1 L_{u_0}^{\delta}(g_1)(s)\Big)ds\Big \|\\
    &+\Big\|\int_0^t S_{g_2}(t,s)\bi \Big(V[L_{u_0}^{\delta}(g_1)(s)]L_{u_0}^{\delta}(g_1)(s) -V[L_{u_0}^{\delta}(g_2)(s)]L_{u_0}^{\delta}(g_2)(s)\Big)ds\Big\|.
\end{align*}
On the one hand, for any $u_0 \in H^1$, by Fourier's transformation, it holds that  for $\|g_1-g_2\|_{\mathcal C([0,T];\mathbb R)}\to 0,$
\begin{align*}
    \|(S_{g_1}(t)-S_{g_2}(t))u_0\|
    &=\|(I-S_{g_2-g_1}(t))u_0\|\le C\|u_0\|_{H^1} |g_2(t)-g_1(t)|^{\frac 12}
    \to 0.
\end{align*}
As a consequence, by \eqref{apri-lug}-\eqref{apri-lug1}, \eqref{growth} and \eqref{growth-reg}, we have that for $\|g_1-g_2\|_{\mathcal C([0,T];\mathbb R)}\to 0,$
\begin{align*}
  &\quad\|\int_0^t (S_{g_1}(t,s)-S_{g_2}(t,s)) \Big(\bi\lambda f_{\delta}(|L_x^{\delta}(g_1)(s)|^2)L_x^{\delta}(g_1)(s)-\alpha_1 L_x^{\delta}(g_1)(s)\Big)ds\|
  \\
  &\quad+\|\int_0^t (S_{g_1}(t,s)-S_{g_2}(t,s))\bi V[L_{u_0}^{\delta}(g_1)(s)]L_{u_0}^{\delta}(g_1)(s) ds\|\\
  &\le C({\lambda},\alpha_1)\int_0^t \|(S_{g_1}(t,s)-S_{g_2}(t,s))\|_{\mathcal L( H; H^1)} \Big(\|V[L_{u_0}^{\delta}(g_1)(s)]L_{u_0}^{\delta}(g_1)(s)\|_{ H^1}\\
  &\quad +\|f_{\delta}(|L_{u_0}^{\delta}(g_1)(s)|^2)L_{u_0}^{\delta}(g_1)(s)\|_{ H^1}+\|L_{u_0}^{\delta}(g_1)(s)\|_{ H^1}\Big)ds\\
  &\le C({T,V,\lambda,|\log(\delta)|})(1+\|u_0\|^{2})\int_0^t |g_1(t)-g_1(s)-g_2(t)+g_1(s)|^{\frac 12}
  \|u_0\|_{ H^1}ds
  \to 0.
\end{align*}
Making use of properties 
\eqref{first-order} and  \eqref{loc-lip}, it follows that 
\begin{align*}
    &\Big\|\int_0^t S_{g_2}(t,s)\bi\lambda \Big(f_{\delta}(|L_{u_0}^{\delta}(g_1)(s)|^2) L_{u_0}^{\delta}(g_1)(s)-f_{\delta}(|L_{u_0}^{\delta}(g_2)(s)|^2)L_{u_0}^{\delta}(g_2)(s)\\
    &\quad +\alpha_1 L_{u_0}^{\delta}(g_2)(s) -\alpha_1 L_x^{\delta}(g_1)(s)\Big)ds\Big \|\\
    &+\Big\|\int_0^t S_{g_2}(t,s)\bi \Big(V[L_{u_0}^{\delta}(g_1)(s)]L_{u_0}^{\delta}(g_1)(s) -V[L_{u_0}^{\delta}(g_2)(s)]L_{u_0}^{\delta}(g_2)(s)\Big)ds\Big\|\\
    \le& C({V,\lambda,|\log(\delta)|},T,\alpha_1)(1+\|u_0\|^{2})\int_0^t \|L_{u_0}^{\delta}(g_1)(s)-L_{u_0}^{\delta}(g_2)(s)\| ds.
\end{align*}
Then standard arguments yield that 
\begin{align*}
    &\|L_{u_0}^{\delta}(g_1)-L_{u_0}^{\delta}(g_2)\|_{C([0,T];H)}
    \\
    \le &C({V,\lambda,|\log(\delta)|},T,\alpha_1)(1+\|u_0\|^{2})(1+\|u_0\|_{H^1})|g_1-g_2|_{C([0,T];\mathbb R)},
\end{align*} 
which completes the proof.
\end{proof}

\begin{lm}\label{lm-exp-good}
Let $T>0$, $\delta\ge 0$ and $\alpha\in [0,1]$. The sequence $\{u^{\epsilon,\delta}\}_{\delta>0}$ is an exponentially good approximation of $u^{\epsilon}$, i.e., for any $\delta_1>0,$
\begin{align*}
 \lim_{\delta \to 0}\lim\sup_{\epsilon \to 0}\epsilon \log \mathbb P(\|u^{\epsilon,\delta}-u^{\epsilon}\|_{C([0,T]; H)}>\delta_1)=-\infty.
\end{align*}
\end{lm}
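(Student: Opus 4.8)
The plan is to estimate the $L^2$-difference $\|u^{\epsilon,\delta}(t)-u^{\epsilon}(t)\|^2$ pathwise, obtain a deterministic Gronwall-type bound with an inhomogeneous term that tends to zero with $\delta$ uniformly on $[0,T]$ and uniformly in $\epsilon$, and then conclude the exponential estimate trivially. First I would apply the It\^o formula (after the same regularization procedure invoked in the proof of Theorem~\ref{tm-1}) to $\frac12\|u^{\epsilon,\delta}(t)-u^{\epsilon}(t)\|^2$. Exactly as in Step~2 of the proof of Theorem~\ref{tm-1}, the Stratonovich dispersion term contributes nothing because $B(\cdot)$ is independent of the spatial variable and $\langle w,\bi\Delta w\rangle=0$, so there is no stochastic integral left and the resulting inequality is \emph{pathwise deterministic}. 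The damping term is nonpositive. The potential term is controlled by \eqref{first-order} together with the mass law \eqref{mass-con}, giving a contribution bounded by $C(m,V)(1+\|u_0\|^2)\int_0^t\|u^{\epsilon,\delta}-u^{\epsilon}\|^2ds$. The nonlinearity term splits, via $f_\delta(|z|^2)=\log\!\big(\tfrac{\delta+|z|^2}{1+\delta|z|^2}\big)$, into a Lipschitz part handled by \eqref{lip-reg-var} (contributing $4|\lambda|\int_0^t\|u^{\epsilon,\delta}-u^{\epsilon}\|^2ds$) and two remainder integrals of the form $\int_0^t\!\int_{\R^d}|u^{\epsilon,\delta}-u^{\epsilon}|\,|\log(1+\tfrac{\delta|u^{\epsilon}|^2}{1+\delta|u^{\epsilon}|^2})|\,|u^{\epsilon}|\,dxds$ and the analogous one with $\log(1+\tfrac{\delta}{\delta+|u^{\epsilon}|^2})$, which after Young's inequality and the embeddings \eqref{wei-sob-ine}, \eqref{GN-ine} are bounded by $C|\lambda|\int_0^t\|u^{\epsilon,\delta}-u^{\epsilon}\|^2ds+C\delta^{\eta}\int_0^t\|u^{\epsilon}\|_{L^2_\alpha}^{d\eta/\alpha}\|u^{\epsilon}\|^{2-2\eta-d\eta/\alpha}ds+C\delta^{\eta'}\int_0^t\|u^{\epsilon}\|^{d\eta'}\|\nabla u^{\epsilon}\|^{2\eta'+2-d\eta'}ds$, for suitable small $\eta,\eta'>0$ and $\alpha\in(0,1]$.

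Next I would invoke the uniform moment bound \eqref{moment-x1} of Theorem~\ref{tm-1}, which is independent of $\epsilon$ and $\delta$, to control $\E$ of the $\delta$-remainder integrals, or, to stay pathwise, I would localize on the event where $\sup_{[0,T]}(\|u^{\epsilon}\|_{H^1}+\|u^{\epsilon}\|_{L^2_\alpha})\le M$ and treat the complementary event separately using the exponential tail of those norms. Concretely, set $\tau_M:=\inf\{t:\|u^{\epsilon}(t)\|_{H^1}+\|u^{\epsilon}(t)\|_{L^2_\alpha}\ge M\}$. On $[0,\tau_M\wedge T]$ Gronwall's inequality gives a bound of the form $\sup_{t\le\tau_M\wedge T}\|u^{\epsilon,\delta}(t)-u^{\epsilon}(t)\|^2\le C(M,T,V,\lambda,\alpha_1,\|u_0\|)\,\delta^{\eta\wedge\eta'}$, which is $<\delta_1^2$ as soon as $\delta$ is small enough, \emph{uniformly in} $\epsilon$. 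Hence for such $\delta$,
\begin{align*}
\PP\big(\|u^{\epsilon,\delta}-u^{\epsilon}\|_{C([0,T];H)}>\delta_1\big)\le \PP(\tau_M\le T)\le \PP\big(\sup_{[0,T]}(\|u^{\epsilon}\|_{H^1}+\|u^{\epsilon}\|_{L^2_\alpha})\ge M\big).
\end{align*}

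To finish I need the right-hand side to decay exponentially at rate $1/\epsilon$, i.e. $\lim\sup_{\epsilon\to0}\epsilon\log\PP(\sup_{[0,T]}\|u^{\epsilon}\|_{H^1\cap L^2_\alpha}\ge M)\to-\infty$ as $M\to\infty$. This is the exponential tail estimate for the $H^1$- and $L^2_\alpha$-norms of the solution: revisiting the energy identities \eqref{ene-evo} and the weighted-norm identity from the proof of Theorem~\ref{tm-1}, the only martingale part is the Stratonovich term, whose It\^o correction and quadratic variation are again zero by spatial independence of $B$ (so in fact $\|\nabla v^\delta(t)\|$ and $\|v^\delta(t)\|_{L^2_\alpha}$ satisfy closed inequalities driven purely by bounded-in-time coefficients after using \eqref{mass-con}); thus a standard exponential martingale / Chebyshev argument, or simply the exponential Markov inequality combined with \eqref{pri-h1}--\eqref{pri-la2} giving $\E\exp(\theta\sup_{[0,T]}\|u^{\epsilon}\|^2_{H^1\cap L^2_\alpha})<\infty$ uniformly in $\epsilon$ for some fixed $\theta>0$, yields $\PP(\sup_{[0,T]}\|u^{\epsilon}\|^2_{H^1\cap L^2_\alpha}\ge M^2)\le e^{-\theta M^2}\E\exp(\theta\sup\|\cdot\|^2)$; one then takes $\epsilon\log$ and lets first $\epsilon\to0$, then $M\to\infty$. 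Taking finally $\delta\to0$ (which only constrains how small $\delta$ must be for the Gronwall bound, not the tail) completes the proof. I expect the main obstacle to be the uniform-in-$\epsilon$ tail bound: one must check that the constants in \eqref{pri-h1}--\eqref{pri-la2} (hence the admissible $\theta$) do not degenerate as $\epsilon\to0$ after undoing the spatial scaling $u^{\epsilon,\delta}(t,x)=v^\delta(t,\epsilon^{1/4}x)$ — the $H^1$-norm picks up a factor $\epsilon^{1/4}$, which is harmless, and the $L^2_\alpha$-norm is scale-invariant, so this works, but it is the point that needs care.
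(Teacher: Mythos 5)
Your first half coincides with the paper's proof: the pathwise Gronwall estimate for $\|u^{\epsilon,\delta}(t)-u^{\epsilon}(t)\|^2$, with vanishing stochastic integral, the splitting of the logarithmic term via \eqref{lip-reg-var} and the remainder controlled through \eqref{wei-sob-ine}--\eqref{GN-ine} into $\delta^{\eta},\delta^{\eta'}$ times integrals of $\|u^{\epsilon}\|_{L^2_\alpha}$ and $\|\nabla u^{\epsilon}\|$, is exactly \eqref{err-con} and the display following it. The divergence, and the gap, is in how you convert this into the statement at speed $\epsilon$.

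Your localization step reduces the problem to showing $\limsup_{\epsilon\to0}\epsilon\log\mathbb P\big(\sup_{[0,T]}\|u^{\epsilon}\|_{H^1\cap L^2_\alpha}\ge M\big)\le -c(M)$ with $c(M)\to\infty$, i.e.\ a tail bound that decays like $e^{-c(M)/\epsilon}$. But the estimate you invoke is an exponential moment bound with a \emph{fixed} $\theta>0$, uniform in $\epsilon$; Chebyshev then gives $\mathbb P(\tau_M\le T)\le K e^{-\theta M^2}$ with $K$ independent of $\epsilon$, and $\epsilon\log(Ke^{-\theta M^2})\to 0$ as $\epsilon\to0$ for every fixed $M$. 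So "take $\epsilon\log$, let $\epsilon\to0$, then $M\to\infty$" yields only $\limsup_\epsilon\epsilon\log\mathbb P\le 0$, not $-\infty$: the required tail must sharpen at rate $1/\epsilon$, which a $\delta$- and $\epsilon$-independent bound cannot deliver. (A secondary inaccuracy feeds this: the quadratic variation of the $L^2_\alpha$-energy is \emph{not} zero — only the $H^1$-energy has a vanishing martingale term, cf.\ \eqref{ene-evo} versus the weighted-norm identity — so the $L^2_\alpha$-norm is genuinely random and needs exponential integrability at scale $1/\epsilon$, exploiting the $\sqrt{\epsilon}$ in front of its stochastic integral, as the paper does in Proposition \ref{prop-3} via \cite[Lemma 3.1]{CHL16b}.) The paper avoids localization altogether: it applies Chebyshev to $\|u^{\epsilon,\delta}-u^{\epsilon}\|_{\mathcal C([0,T];H)}^{1/\epsilon}\ge\delta_1^{1/\epsilon}$ and uses that the $1/\epsilon$-moments of the random factor in the Gronwall bound grow at most like $\epsilon^{-1}C^{1/\epsilon}$ (the constants in \eqref{pri-h1}--\eqref{pri-la2} are exponential in $p$), so that $\epsilon\log\mathbb P\le C_1+\log\big(\delta^{\frac12\max(\eta,\eta')}/\delta_1\big)$, which tends to $-\infty$ as $\delta\to0$ without ever needing a large-deviation-scale tail for $u^{\epsilon}$ itself. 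Either adopt that route, or upgrade your tail estimate to the $1/\epsilon$-scale exponential moments; as written, the final step does not prove the lemma.
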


\begin{proof}
We prove that $u^{\epsilon,\delta}$ is an exponentially good approximation of $u^{\epsilon}$ by the strong convergence property under the $L^p(\Omega)$-norm. By the chain rule, the proprieties \eqref{lip-reg-var} and \eqref{first-order}, it holds that 
\begin{align*}
  d\|u^{\epsilon,\delta}-u^{\epsilon}\|^2
  =&2\<\bi \Delta (u^{\epsilon,\delta}-u^{\epsilon}),u^{\epsilon,\delta}-u^{\epsilon}\>\circ dB(t)\\
  &+2\<\bi\lambda ( f_{\delta}(|u^{\epsilon,\delta}|^2)u^{\epsilon,\delta}-\log(|u^{\epsilon}|^2)u^{\epsilon}),u^{\epsilon,\delta}-u^{\epsilon}\>dt\\
  &+2\<\bi (V[u^{\epsilon,\delta}]u^{\epsilon,\delta}-V[u^{\epsilon}]u^{\epsilon}),u^{\epsilon,\delta}-u^{\epsilon}\>\\
  &-2\alpha_1\|u^{\epsilon,\delta}(t)-u^{\epsilon}(t)\|^2dt\\
  \le& C(\lambda)\| (f_{\delta}(|u^{\epsilon}(t)|^2)u^{\epsilon}(t)-\log(|u^{\epsilon}(t)|^2))u^{\epsilon}(t)\|
     ^2 dt\\
   &+C(V,\lambda.\alpha_1)(1+\|u^{\epsilon,\delta}\|^{2}+\|u^{\epsilon}\|^{2})\|u^{\epsilon,\delta}(t)-u^{\epsilon}(t)\|^2dt.
\end{align*}
According to \eqref{GN-ine} and \eqref{wei-sob-ine}, it follows that 
for $\eta\in [0,\frac {2\alpha}{2\alpha+d})$, $\frac {\eta'd}{2\eta'+2}\in [0,1)$ and $\alpha\in (0,1]$,
\begin{align}\label{err-con}
   &\quad\| (f_{\delta}(|u_1|^2)u_1-\log(|u_1|^2))u_1\|
     ^2\\\nonumber
  &\le  C(\|\log(1+\delta|u_1|^2)u_1\|^2+\|\log(1+\frac {\delta}{|u_1^2|})u_1\|^2)\\\nonumber
  &\le C(\delta^{\eta}+\delta^{\eta'})(\|u_1\|_{L_{\alpha}^2}^{\frac {d\eta}{\alpha}}\|u_1\|^{2-2\eta-\frac {d\eta}{\alpha}}+\|u_1\|^{d\eta'}\|\nabla u_1\|^{2\eta'+2-d\eta'}),
\end{align}
where $u_1\in L_{\alpha}^2\cap H^1.$
By using \eqref{err-con} and Gronwall's inequality, we get that 
\begin{align*}
   \|u^{\epsilon,\delta}(t)-u^{\epsilon}(t)\|^2
   &\le e^{C({V,\lambda},\alpha_1)(1+\|u_0\|^2)T}(\delta^{\eta}+\delta^{\eta'})(1+\|u_0\|^{2-2\eta-\frac {d\eta}{\alpha}}+\|u_0\|^{d\eta'})\\
   &\quad \int_0^T (\|u^{\epsilon}\|_{L_{\alpha}^2}^{\frac {d\eta}{\alpha}}+\|\nabla u^{\epsilon}\|^{2\eta'+2-d\eta'}) dt.
\end{align*}
Then according to the priori estimate of $u^{\epsilon}$ and applying Chebyshev's inequality, it holds that
\begin{align*}
  &\epsilon \log \mathbb P(\|u^{\epsilon,\delta}-u^{\epsilon}\|_{\mathcal C([0,T]; H)}\ge \delta_1)\\
  \le&   \epsilon \log \mathbb P(\|u^{\epsilon,\delta}-u^{\epsilon}\|_{\mathcal C([0,T]; H)}^{\frac 1\epsilon}\ge \delta_1^{\frac 1{\epsilon}})\\
  \le&  \epsilon \log \frac {C(V,\lambda,T,\alpha_1,u_0))^{\frac 1{\epsilon}}\E[\Big(\int_0^T (\|u^{\epsilon}\|_{L_{\alpha}^2}^{\frac {d\eta}{\alpha}}+\|\nabla u^{\epsilon}\|^{2\eta'+2-d\eta'}) dt\Big)^{\frac 1 {\epsilon}}]\delta^{\max(\frac {\eta}{2\epsilon},\frac {\eta'}{2\epsilon})}} {\delta_1^{\frac 1\epsilon}}\\
  \le& C_1(V,\lambda,T,\alpha_1,u_0)+\log(\frac {\delta^{\frac 12\max(\eta,\eta')}}{\delta_1}).
\end{align*}
Here we have use the estimate 
\begin{align*}
\E[\Big(\|u^{\epsilon}(t)\|_{L_{\alpha}^2}^{\frac {d\eta}{\alpha}}+\|\nabla u^{\epsilon}(t)\|^{2\eta'+2-d\eta'}\Big)^{\frac 1{\epsilon}}]&\le \frac 1{\epsilon}   C_2(V,\lambda,T,\alpha_1,u_0)^{\frac 1{\epsilon}}.
\end{align*}
which can be obtained by It\^o's formula similar to \eqref{pri-h1} and \eqref{pri-la2}.
Then letting $\delta\to 0,$ we have complete the proof for the exponentially good approximation property.

\end{proof}

\begin{rk}\label{rk-exp-good}
From the above analysis, it can be seen that the strongly convergent approximation with explicit convergence rate and finite moment bounds under $L^p(\Omega)$-norm for sufficiently large $p\ge 1$ is also an exponentially good approximation for stochastic (partial) differential equation.  By a classical approach in \cite[Chapter 5]{DZ98lp}, 
one could also prove the uniform LDP for \eqref{reg-log-rands} with $\delta\ge 0$, for any compact set $K \subset L_\alpha^2 \cap H^1$, for any $A\in \mathcal B(\mathcal C([0,T]; H))$,
\begin{align}\label{ldp}
 &-\sup_{u_0\in K}\inf_{w\in Int(A)} I^{\delta,T}_{u_0}(w)\le \lim\inf_{\epsilon\to0}\epsilon \log\inf_{u_0\in K} \mathbb P(u^{\epsilon}\in A)\\\nonumber
 &\le \lim\sup_{\epsilon \to 0}\epsilon \log\sup_{u_0\in K}\mathbb P(u^{\epsilon}\in A)\le -\inf_{w\in \bar A, u_0\in K}I_{u_0}^{\delta,T}(w). 
\end{align}
\end{rk}

In the following, we present a useful LDP result of \eqref{reg-log-rands} with $\delta>0$ when the domain of $I_{u_0}^{\delta}$ is considered on $\mathcal C([0,T];\mathcal X_1)$, whose proof is in the appendix. Here $\mathcal X_1:=H^1\cap L_1^2.$ 

For any $a\ge 0$, denote $K_T^{u_0,\delta}(a)=(I_{u_0}^{\delta,T})^{-1}([0,a]),$ i.e., 
$$K_T^{u_0,\delta}(a)=\Big\{y\in \mathcal C([0,T];\mathcal X_1)| y=L_{u_0}^{\delta}(g),\frac 12 \int_0^T|h|^2dt\le a, h=\dot g \Big\}.$$

\begin{tm}\label{ldp-chi1}
    Let $\delta>0,T>0$, $u_0\in \mathcal X_1$ and $d\le 2$. For every $a,\rho,\kappa,\gamma$ positive,
    \begin{enumerate}
        \item[(i)] there exits $\epsilon_0>0$ such that for every $\epsilon \in (0,\epsilon_0)$ and $\|u_0\|_{\mathcal X_1}\le \rho$ and $\widetilde a\in (0,a],$
        \begin{align*}
            \mathbb P(d_{\mathcal C([0,T];\mathcal X_1)}(u^{\epsilon,\delta,u_0},K_{T}^{u_0}(\widetilde a))\ge \gamma)<\exp\Big(-\frac {\widetilde a -\kappa}{\epsilon}\Big),
        \end{align*}
         \item[(ii)] there exists $\epsilon_0>0$ such that for every $\epsilon \in (0,\epsilon_0)$ and $\|u_0\|_{\mathcal X_1}\le \rho$ and $w\in K_{T}^{u_0,\delta}(a),$
           \begin{align*}
            \mathbb P(\|u^{\epsilon,\delta,u_0}-w\|_{\mathcal C([0,T];\mathcal X_1)}<\gamma)>\exp\Big(-\frac {I_{u_0}^{\delta,T}(w) -\kappa}{\epsilon}\Big).
        \end{align*}
    \end{enumerate}
\end{tm}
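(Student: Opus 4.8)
The plan is to establish the two estimates as the uniform (over compact sets of initial data) version of the lower and upper bounds in the LDP for $\{u^{\epsilon,\delta,u_0}\}_{\epsilon>0}$ on the path space $\mathcal C([0,T];\mathcal X_1)$, in the spirit of the classical Freidlin--Wentzell estimates (see \cite[Chapter 5]{DZ98lp} and the approach of \cite{Gau05,Gau05b,Gau08}). The first step is to upgrade the setting of Proposition \ref{prop-ldp} from $\mathcal C([0,T];H)$ to $\mathcal C([0,T];\mathcal X_1)$ when $\delta>0$ and $d\le 2$: here the dimensional restriction is used so that the Gagliardo--Nirenberg and weighted Sobolev inequalities \eqref{GN-ine} and \eqref{wei-sob-ine} give the needed control of the nonlinear and nonlocal terms in $\mathcal X_1$, and the regularization $\delta>0$ makes $f_\delta$ globally Lipschitz with constant depending on $|\log\delta|$ via \eqref{growth-reg}--\eqref{loc-lip}. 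One then revisits the a priori bounds \eqref{pri-h1}, \eqref{pri-la2} and \eqref{apri-lug}--\eqref{apri-lug1}, but now tracking the $\mathcal X_1$-norm and, crucially, proving \emph{exponential} moment bounds: for every $p$ and $R$ one needs $\sup_{\epsilon\le 1}\epsilon\log\E\exp\big(p\,\epsilon^{-1}\sup_{[0,T]}\|u^{\epsilon,\delta,u_0}\|_{\mathcal X_1}^2\big)<\infty$ uniformly for $\|u_0\|_{\mathcal X_1}\le\rho$, obtained from the It\^o formula for $\|u^{\epsilon,\delta}\|_{\mathcal X_1}^2$, the skew-symmetry of $\bi\Delta$ killing the leading Stratonovich drift, and an exponential-martingale (Novikov-type) argument on the resulting local martingale.

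Next I would record the continuity and compactness facts: the skeleton map $L_{u_0}^\delta$ is continuous from $\mathcal H_P$ (with weak or strong topology as appropriate) into $\mathcal C([0,T];\mathcal X_1)$, by the same fixed-point argument as in Lemma \ref{lm-con-ske} carried out in $\mathcal X_1$ using \eqref{growth}, \eqref{first-order}, \eqref{loc-lip} and the bounds \eqref{apri-lug}--\eqref{apri-lug1}; consequently each level set $K_T^{u_0,\delta}(a)$ is compact in $\mathcal C([0,T];\mathcal X_1)$ and depends upper-semicontinuously on $u_0$. With the LDP (in the $\mathcal X_1$ topology) and these bounds in hand, part (i) follows from the standard chain: given $\gamma$, the event $\{d_{\mathcal C([0,T];\mathcal X_1)}(u^{\epsilon,\delta,u_0},K_T^{u_0}(\widetilde a))\ge\gamma\}$ is contained in the complement of a $\gamma$-neighborhood of a level set, which is closed with rate-function infimum $\ge\widetilde a$; the LDP upper bound gives $\limsup_\epsilon\epsilon\log\PP(\cdots)\le-\widetilde a$, and the exponential moment bounds provide the uniformity in $\|u_0\|_{\mathcal X_1}\le\rho$, yielding the claimed $\exp(-(\widetilde a-\kappa)/\epsilon)$ for $\epsilon$ small. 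Part (ii) is the lower bound: for $w=L_{u_0}^\delta(g)\in K_T^{u_0,\delta}(a)$, one uses Girsanov's theorem to shift $\sqrt\epsilon B$ by $g/\sqrt\epsilon$, reducing $\PP(\|u^{\epsilon,\delta,u_0}-w\|_{\mathcal X_1}<\gamma)$ to an expectation under the shifted measure of an indicator times $\exp(-\epsilon^{-1}\int_0^T h\,dB-\tfrac1{2\epsilon}\|g\|_{\mathcal H_P}^2)$, and then shows that under the shift the solution converges in $\mathcal C([0,T];\mathcal X_1)$-probability to $w$, so the indicator has probability close to $1$; controlling the stochastic integral on this high-probability set (again via the exponential estimates) gives the bound $\exp(-(I_{u_0}^{\delta,T}(w)-\kappa)/\epsilon)$.

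The main obstacle I expect is the passage to the stronger topology $\mathcal C([0,T];\mathcal X_1)$: the nonlocal potential $\bi V[v]v$ and the regularized logarithmic term must be estimated in $H^1\cap L_1^2$ uniformly, and the weighted-norm It\^o computation (the long integration-by-parts identity from the proof of Theorem \ref{tm-1}) must be combined with exponential-moment control rather than mere $L^p(\Omega)$ bounds — this is where $d\le2$ and $\delta>0$ are genuinely needed, since for $d\ge3$ the interpolation exponents in \eqref{GN-ine} leave supercritical powers of $\|\nabla v\|$ that break the exponential integrability, and for $\delta=0$ the $|\log\delta|$-dependent Lipschitz constant blows up. A secondary technical point is verifying that the Girsanov shift in part (ii) produces a process that still enjoys the uniform a priori $\mathcal X_1$-bounds (the drift $g$ contributes a term controlled by $\|g\|_{W^{1,2}}$ as in \eqref{apri-lug1}), so that the convergence of the shifted solution to the skeleton $w$ can be quantified; this is routine once the deterministic well-posedness of the skeleton equation in $\mathcal X_1$ is established, but it must be done with care to keep all constants independent of $\epsilon$ and of $u_0$ on the ball $\{\|u_0\|_{\mathcal X_1}\le\rho\}$.
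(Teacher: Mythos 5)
Your high-level skeleton (establish the LDP in $\mathcal C([0,T];\mathcal X_1)$ for $\delta>0$, then read off (i) from the upper bound on the closed set $\{v: d(v,K_T^{u_0}(\widetilde a))\ge\gamma\}$ and (ii) from the lower bound on the open ball around $w$) is the same as the paper's, and your derivation of (i)–(ii) from the LDP is fine; for (ii) the paper simply invokes the LDP lower bound on $B_\gamma^w$ with $w=L_{u_0}^{\delta}(g)$, so your Girsanov shift re-proves that bound and is heavier than necessary, though legitimate once the $\mathcal X_1$-stability estimates are available. The genuine gap is in the step you call the ``first step'': you propose to upgrade the topology from $\mathcal C([0,T];H)$ to $\mathcal C([0,T];\mathcal X_1)$ by proving exponential moment bounds of $\sup_{[0,T]}\|u^{\epsilon,\delta}\|_{\mathcal X_1}^2$ via a Novikov-type argument. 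Such bounds control $\mathbb P(\sup_t\|u^{\epsilon,\delta}\|_{\mathcal X_1}>R)$ but cannot deliver an LDP (or exponential tightness) in the $\mathcal X_1$ topology, because balls of $\mathcal X_1$ are not compact in $\mathcal X_1$ and the Schr\"odinger flow gives no smoothing; nor do they supply the continuity of the skeleton/solution map into $\mathcal C([0,T];\mathcal X_1)$, which fails to follow from ``the same fixed-point argument as in Lemma \ref{lm-con-ske}'' since that argument pays one derivative ($\|(I-S_{g_2-g_1})u_0\|\le C\|u_0\|_{H^1}|g_2-g_1|^{1/2}$), so continuity in $\mathcal X_1$ requires initial data better than $\mathcal X_1$.

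The paper's actual device, which your plan is missing, is a two-level approximation: approximate $u_0$ by $u_0^{R'}\in H^2\cap W^2_{1,x}$, prove $H^2$ and weighted $\|x\nabla\cdot\|$ a priori bounds for $L_{u_0^{R'}}^{\delta}(g)$ --- this is exactly where $d\le 2$ enters, through the Gagliardo--Nirenberg inequality $\|w\|_{L^4}\le C\|\nabla w\|^{d/4}\|w\|^{1-d/4}$, which leaves a power $d/2\le 1$ of the $H^2$-norm so that Gronwall closes (not, as you suggest, through exponential integrability of $\|\nabla v\|$) --- then show $L_{u_0^{R'}}^{\delta}(g)\to L_{u_0}^{\delta}(g)$ in $\mathcal C([0,T];\mathcal X_1)$ as $R'\to\infty$ (estimates \eqref{err-h1-r}, \eqref{weight-err-r}), and finally verify that $u^{\epsilon,u_0^{R'},\delta}$ is an exponentially good approximation of $u^{\epsilon,\delta}$, so that the rate function transfers to $\mathcal C([0,T];\mathcal X_1)$ by the machinery already used in Proposition \ref{prop-ldp} (cf.\ \cite[Theorem 4.2.23]{DZ98lp}). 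Without this regularization of the initial datum and the accompanying convergence in $\mathcal X_1$, your assertion ``with the LDP in the $\mathcal X_1$ topology in hand'' is unsupported, and the same missing stability also undercuts the quantitative convergence of the shifted process needed in your Girsanov argument for (ii); the uniformity in $\|u_0\|_{\mathcal X_1}\le\rho$ likewise comes from the uniform LDP over sets of initial data and the $\|u_0\|$-dependence of the skeleton bounds, not from exponential moments.
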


The assumption $d\le 2$ is technical for deriving a priori estimates in the Sobolev space of higher order.
It should be remarked that our current approach does not give the LDP result of Theorem \ref{ldp-chi1} in $\mathcal C([0,T];\mathcal X_1)$ for the case $\delta=0$ due to the strong singularity of the logarithmic nonlinearity in the Sobolev space of higher order. For further discussion, we refer to section \ref{sec-fur}.

\section{Application: exit from a basin of attraction}
\label{sec-exit}
This section is devoted to the first exit time  from a neighborhood of an asymptotically stable equilibrium point for \eqref{reg-log-rands} with $\delta>0$ and $\alpha_1>0$. For simplicity, we may assume that $V=0.$ We refer to \cite{DZ98lp,MR722136} for more backgrounds for the exit problems and its connection to LDP.

Considering the following infinite-dimensional ordinary differential equation with weakly damping effect,
\begin{align}\label{det-log-ode}
 dw=\bi\lambda f_{\delta}(|w|^2)wdt-\alpha_1 w dt,   
\end{align} it holds that 
\begin{align*}
    &|w(t,x)|=e^{-\alpha_1 t}|w(0,x)|,\\
    &w(t,x)=\exp\Big(-\alpha_1 t+\int_0^t\bi f_{\delta}(|w(0,x)|^2e^{-2\alpha_1 s})ds\Big) w(0,x).
\end{align*}
Thus, $0$ is the unique attractor of the above equation in $H$.  Note that in the $H$-topology, the exit problems are not interesting since 
\begin{align*}
    \|w^{\epsilon,\delta}(t)\|\le e^{-\alpha_1 t} \|w^{\epsilon,\delta}(0)\|.
\end{align*}
Similarly, one may consider the topology in $L_{\alpha}^2$ since $0$ is also the attractor of the considered equation.
For simplicity, we can take $\alpha=1$ and obtain 
\begin{align*}
  \|w(t)\|_{L_1^2}\le e^{-\alpha_1 t} \|w(0)\|_{L_1^2}.  
\end{align*}
However, $0$ may be not the attractor in $H^\bs, \bs>0.$
For example, when $\bs=1$, by Young's inequality,
\begin{align*}
d\|\nabla w(t)\|^2&=\<\bi \lambda f'_{\delta}(|w(t)|^2)Re(\bar w(t)\nabla w(t))w(t),\nabla w(t) \>dt-2\alpha_1\|\nabla w(t)\|^2\\
&\le (-2\alpha_1+4|\lambda|) \|\nabla w(t)\|^2dt,
\end{align*}
which
implies that if $\alpha_1>2|\lambda|$, $0$ is also the attractor in $H^{1}.$


Define a new norm $\|\cdot\|_{\mathcal X_{\bs}}$ defined by $\sqrt{\|\cdot\|_{H^{
\bs
}}^2+\|\cdot \|_{L_{\bs}^2}^2}$ with $\bs\ge 0$. We will consider the exit problem under $\|\cdot\|_{\mathcal X_1}$-norm. Consider an open bounded domain $D$ containing $0$ in the interior of $\mathcal X_{1}$ such that $D\subset B_{R}$ for a large enough $R>0.$ The above analysis indicates that 
$D$ is invariant under the deterministic flow of \eqref{det-log-ode} if  $\alpha_1>2|\lambda|$.

Define $\tau^{\epsilon,\delta,u_0}=\inf\{t\ge 0| u^{\epsilon,\delta,u_0}(t)\in D^{c}\}$ the first exit time of the regularized SlogS equation from $D.$ Similar to \cite{Gau08}, we introduce $$\overline{M^{\delta}}=\inf\{I_0^{\delta,T}(y): y(T)\in (\overline{D})^c, T>0\}.$$
For any sufficient positive $\rho>0,$ set
$$M_{\rho}^{\delta}=\inf\{I_{u_0}^{\delta,T}(y):\|u_0\|_{\mathcal X_1}\le \rho, y(T)\in (D_{-\rho})^c,T>0\}$$
with $D_{-\rho}:=D\backslash \mathcal  N^0(\partial D,\rho)$ where $\mathcal  N^0(\partial D,\rho)$ is the open neighbourhood of $\partial D$ with the distance $\rho$. Here $\partial D$ is the boundary of $D$ in $\mathcal X_1.$
Define 
$\underline {M^{\delta}}=\lim\limits_{\rho\to 0}M_{\rho}^{\delta}$.
It can be seen that $\underline{M^{\delta}}\le \overline{M^{\delta}}.$
Below, we shall prove that the lower bound of $\underline{M^{\delta}}$ is strictly larger than $0$ thanks to special structure of the skeleton equation.

\begin{lm}\label{lm-0}
Let $\delta>0$, $\alpha_1>2|\lambda|$ and $\alpha=1$. Then it holds that 
$0<\inf\limits_{\delta>0 } \underline{M^{\delta}}\le \inf\limits_{\delta> 0}\overline{M^{\delta}}.$
\end{lm}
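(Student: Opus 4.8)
The plan is to establish the strictly positive lower bound $0 < \inf_{\delta>0}\underline{M^\delta}$ by exploiting the mass dissipation law and the $H^1$/weighted estimates for the skeleton equation, while the inequality $\underline{M^\delta}\le\overline{M^\delta}$ is immediate from the definitions (since $D_{-\rho}\subset D$ forces $(\overline D)^c\subset (D_{-\rho})^c$, and the infimum defining $M_\rho^\delta$ is over a larger target set and a larger family of initial data, hence $M_\rho^\delta\le \overline{M^\delta}$ for all $\rho$, whence $\underline{M^\delta}=\lim_{\rho\to0}M_\rho^\delta\le\overline{M^\delta}$). So the whole content is the lower bound. First I would argue by contradiction: suppose $\inf_{\delta>0}\underline{M^\delta}=0$. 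Then there exist sequences $\delta_n>0$, $\rho_n\to 0$, times $T_n>0$, controls $g_n\in\mathcal H_P$ with $\tfrac12\|g_n\|_{\mathcal H_P}^2\to 0$, and initial data $u_{0,n}$ with $\|u_{0,n}\|_{\mathcal X_1}\le\rho_n$, such that $y_n:=L^{\delta_n}_{u_{0,n}}(g_n)$ satisfies $y_n(T_n)\in (D_{-\rho_n})^c$. In particular $\|y_n(T_n)\|_{\mathcal X_1}$ is bounded below by a fixed positive constant $c_0>0$ (the $\mathcal X_1$-distance from $0$, which lies in the interior of $D$, to $\partial D$); more precisely $\|y_n(T_n)\|_{\mathcal X_1}\ge c_0$ for $n$ large, since $0\in \mathrm{int}(D)$ and $D_{-\rho_n}\nearrow D$.

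Next I would show that a skeleton solution starting near $0$ with small control energy cannot reach $\mathcal X_1$-norm bounded away from $0$. For the $L^2$-component this is exact: from \eqref{apri-lug}, $\|y_n(t)\|=e^{-\alpha_1 t}\|u_{0,n}\|\le\|u_{0,n}\|\le\rho_n\to 0$, uniformly in $t$ and independently of $g_n$ and $\delta_n$. For the $H^1$-component, I would revisit the energy identity \eqref{ene-evo} in its deterministic (skeleton) form: with $V=0$ not assumed here but harmless, the martingale term is replaced by a $dg$-term and, crucially, $\<\bi\Delta v,\Delta v\>=0$ by skew-symmetry, so the control does not feed energy into $\|\nabla y_n\|^2$ at all along the pathwise equation written in Stratonovich form. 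The remaining terms are the logarithmic term $\<\bi\lambda f_{\delta_n}(|y_n|^2)\nabla y_n,\nabla y_n\>=0$ (again skew-symmetric, since $f_{\delta_n}(|y_n|^2)$ is real), the term $2\<\bi\lambda \tfrac{\Re(\bar y_n\nabla y_n)}{|y_n|^2}y_n,\nabla y_n\>$ which is bounded by $4|\lambda|\|\nabla y_n\|^2$ as in the computation leading to the bound $\alpha_1>2|\lambda|$ in section \ref{sec-exit}, and the damping $-\alpha_1\|\nabla y_n\|^2$. Since $\alpha_1>2|\lambda|$, Gronwall gives $\|\nabla y_n(t)\|^2\le e^{(-2\alpha_1+4|\lambda|)t}\|\nabla u_{0,n}\|^2\le\|\nabla u_{0,n}\|^2\le\rho_n^2$, again uniformly in $t$, $g_n$, $\delta_n$. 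Combining, $\|y_n(T_n)\|_{\mathcal X_1}\le C\rho_n\to 0$, contradicting $\|y_n(T_n)\|_{\mathcal X_1}\ge c_0>0$. The same argument handles the weighted $L_1^2$-norm using \eqref{apri-lug1} together with $\|g_n\|_{W^{1,2}([0,T_n];\mathbb R)}\to 0$, so in fact no control-dependent growth appears there either in the limit.

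The only subtlety — and the step I expect to be the main obstacle — is the potential unboundedness of $T_n$: the a priori bounds above are uniform in $T$ only because the damping coefficient dominates ($\alpha_1>2|\lambda|$ for the $H^1$ part, $\alpha_1>0$ for the $L^2$ and $L_1^2$ parts), so the prefactors $e^{(-2\alpha_1+4|\lambda|)T_n}$ and $e^{-\alpha_1 T_n}$ are bounded by $1$; without this dissipativity the bounds would degrade and the argument would fail. One must also be careful that the energy identity \eqref{ene-evo} is only formal and requires the regularization/approximation procedure of \cite{CH23}, but since we only use the resulting inequality $\|\nabla y_n(t)\|^2\le e^{(-2\alpha_1+4|\lambda|)t}\|\nabla u_{0,n}\|^2$ (valid after the limiting argument, exactly as in section \ref{sec-exit}), this is routine. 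Finally I would remark that the contradiction shows $\inf_{\delta>0}\underline{M^\delta}\ge\varepsilon_0$ for some $\varepsilon_0>0$ — indeed, the argument quantifies: if $\|y(T)\|_{\mathcal X_1}\ge c_0$ while $\|u_0\|_{\mathcal X_1}\le\rho$ small, then $\tfrac12\|g\|_{\mathcal H_P}^2$ is bounded below by a positive constant depending only on $c_0$, $\alpha_1$, $\lambda$ (not on $\delta$, $T$, $\rho$), because the map $g\mapsto L^\delta_{u_0}(g)$ has the quantitative continuity from Lemma \ref{lm-con-ske} and the deterministic flow ($g=0$) stays trapped near $0$. This yields $0<\inf_{\delta>0}\underline{M^\delta}$, completing the proof.
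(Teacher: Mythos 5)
Your proposal is correct and, at its core, it is the paper's own argument: the paper tracks $\mathcal H(L^{\delta}_{u_0}(g))=\tfrac12\|L^{\delta}_{u_0}(g)\|_{\mathcal X_1}^2$ along the skeleton equation, notes that the control enters only through the weighted pairing $\langle 2x L,\bi\nabla L\rangle h$ (crudely bounded by $2R^2|h|$ while the path lies in $D\subset B_R$), and then uses the $(\alpha_1-2|\lambda|)$-damped Duhamel formula together with Cauchy--Schwarz in time to get a lower bound on $\|h\|_{L^2}$ that is uniform in $T$, $\delta$ and small $\rho$; your contradiction argument rests on exactly the same differential inequality, with the additional (correct, and in fact sharper) observation that the $L^2$- and $H^1$-components are completely insensitive to the control because $\langle \bi\Delta L,\Delta L\rangle=0$. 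Two of your citations, however, do not do the work you assign them: \eqref{apri-lug1} has a constant depending on $T$, so it cannot by itself give the $T$-uniform smallness of $\|L\|_{L_1^2}$ — the fix is the one you gesture at, namely redoing the weighted estimate while keeping the $-\alpha_1$ damping and the decaying factor $\|\nabla L(t)\|\le\rho$, which is precisely the paper's computation of $d\mathcal H$; and the closing appeal to Lemma \ref{lm-con-ske} should be dropped, since its continuity constant scales like $|\log\delta|$ (and grows with $T$), so it cannot produce a bound uniform in $\delta$ — the damped energy estimate already supplies the $\delta$-uniform constant, because $|s f_{\delta}'(s)|\le 1$ uniformly in $\delta$.
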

\begin{proof}
 Let $d(0,\partial D)>0$ denote the distance between $0$ and $\partial D.$
 Choose $\rho$ small enough such that
 $B_{\rho}^0\subset D$ and that the distance between $B_{\rho}^0$ and $(D_{-\rho})^c$ is larger than $\frac 12d(0,\partial D)$. 
 By studying the functional $\mathcal H(L_{u_0}^{\delta}(g)):=\frac 12 \|L_{u_0}^{\delta}(g)\|^2_{\mathcal X_1}$, it follows that
\begin{align*}
    &d\mathcal H(L_{u_0}^{\delta}(g)(t))\\
    =&\<2x L_{u_0}^{\delta}(g)(t),\bi \nabla  L_{u_0}^{\delta}(g)(t)\> h(t) dt 
    -\alpha_1 \mathcal H(L_{u_0}^{\delta}(g)(t))dt\\
    &+\<\nabla L_{u_0}^{\delta}(g)(t), \bi\lambda  f'_{\delta}(|L_{u_0}^{\delta}(g)(t)|^2)Re(\overline{L_{u_0}^{\delta}(g)}(t)\nabla L_{u_0}^{\delta}(g)(t))L_{u_0}^{\delta}(g)(t)\>dt\\
    \le& 2\|L_{u_0}^{\delta}(g)(t)\|_{L_1^2}\|\nabla  L_{u_0}^{\delta}(g)(t)\| h(t)dt
    -\alpha_1 \mathcal H(L_{u_0}^{\delta}(g)(t))dt+2|\lambda|
  \|\nabla L_{u_0}^{\delta}(g)(t)\|^2dt,
\end{align*}
where $h=\dot g.$
Using the Duhamel formula, we obtain that 
\begin{align*}
    \mathcal H(L_{u_0}^{\delta}(g)(T))-e^{-(\alpha_1-2|\lambda|+2\delta )T}\mathcal H(L_{u_0}^{\delta}(g)(0))
    \le \int_0^T e^{-(\alpha_1-2|\lambda|+2\delta)(T-s)}2R^2|h(s)| ds.
\end{align*}
As a consequence, if $L_{u_0}(g)(T)\in (D_{-\rho})^c$,  it holds that 
\begin{align*}
  \frac {d^2(0,\partial D)-4d(0,\partial D)\rho}{16} \le R^2  \sqrt{\int_0^Te^{-2(\alpha_1-2|\lambda|)(T-s)}ds}\|h\|_{L^2([0,T];\mathbb R)}.
\end{align*}
Taking $\rho$ sufficient small leads to 
\begin{align*}
    \frac {d^2(0,\partial D)}{32 R^2\sqrt{2\alpha_1-4|\lambda|}}\le \|h\|_{L^2([0,T];\mathbb R)},
\end{align*}
 thus the desired results follow. 
\end{proof}

We are not able to prove $\underline{M^{\delta}}=\overline{M^{\delta}}$ due to loss of the approximate controllability  at this current study. The argument to prove the approximate controllability of the original system (i.e., $\delta=0$) is hard since the nonlinearity is not locally Lipschitz and the Schr\"odinger group relies on the control. 
In the ideal case, it is expected that $$\lim_{\delta \to 0} M^{\delta}=\inf_{v\in \partial D}U(0,v),$$
with the quasi-potential defined by
$$U(u_0,u_1):=\inf\{I_{u_0}^{0,T}(y)| y\in \mathcal C([0,T],\mathcal X_1), y(0)=u_0, y(T)=u_1, T>0\}.$$
Define $\sigma_{\rho}^{\epsilon,\delta,u_0}:=\inf\{t\ge 0: u^{\epsilon,\delta,u_0}(t)\in B_{\rho}^0\cup D^c\}.$

\begin{lm}\label{lm-1}
Let $\delta>0$, $\alpha_1>2|\lambda|$ and $\alpha=1$. 
For every $\rho$ small enough and $L$ positive with $B_{\rho}^0\subset D$, there exists $T>0$ and $\epsilon_0$ such that for every $u_0\in D$ and $\epsilon \in (0,\epsilon_0),$
  \begin{align*}
      \mathbb P(\sigma_{\rho}^{\epsilon,\delta,u_0}>T)\le \exp(-\frac L{\epsilon}).
  \end{align*}
\end{lm}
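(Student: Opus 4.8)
The statement to be proved is Lemma \ref{lm-1}: for the regularized equation \eqref{reg-log-rands} with $\delta>0$, $\alpha_1>2|\lambda|$, $\alpha=1$ and $V=0$, starting from any $u_0\in D$, the process reaches $B_\rho^0\cup D^c$ before a fixed time $T$ with probability at least $1-\exp(-L/\epsilon)$, uniformly in $u_0$. This is the classical "cannot stay in the annulus $D_{-\rho}\setminus B_\rho^0$ for too long" step in the Freidlin--Wentzell exit-time analysis (cf. \cite{DZ98lp,MR722136,Gau08}).

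The plan is to argue by contradiction against the deterministic flow, using the LDP lower estimate of Theorem \ref{ldp-chi1}(i). First I would note that in the annulus $\overline{D_{-\rho}}\setminus B_\rho^0$ the deterministic skeleton with $g\equiv 0$, i.e. the flow of \eqref{det-log-ode}, is strictly contracting in the $\mathcal X_1$-norm because $\alpha_1>2|\lambda|$: indeed combining $\|w(t)\|=e^{-\alpha_1 t}\|w(0)\|$, $\|w(t)\|_{L_1^2}\le e^{-\alpha_1 t}\|w(0)\|_{L_1^2}$ and $d\|\nabla w\|^2\le(-2\alpha_1+4|\lambda|)\|\nabla w\|^2\,dt$, one gets $\|w(t)\|_{\mathcal X_1}\le e^{-(\alpha_1-2|\lambda|)t}\|w(0)\|_{\mathcal X_1}$. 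Hence there is a deterministic time $T_0$ (depending only on $R$ and $\rho$, since $D\subset B_R$) such that, whatever $u_0\in \overline{D}$, the deterministic trajectory $L_{u_0}^{\delta}(0)$ has entered $B_\rho^0$ by time $T_0$; choose $T=T_0+1$, say. Next I would invoke the fact that $I_{u_0}^{\delta,T}(y)=0$ forces $y=L_{u_0}^{\delta}(0)$, so the distance from the zero-cost trajectory to the complement of $B_\rho^0$ over $[0,T]$ — more precisely, the infimum of $I_{u_0}^{\delta,T}(y)$ over all $y$ that avoid $B_\rho^0\cup D^c$ on $[0,T]$ — is bounded below by a positive constant $a>0$ uniform over $\{\|u_0\|_{\mathcal X_1}\le\rho\}\cup\overline D$; this uses lower semicontinuity and compactness of sublevel sets $K_T^{u_0,\delta}(a)$ (good rate function) together with the continuity of $L_{u_0}^\delta$ from Lemma \ref{lm-con-ske}, plus the a priori bounds \eqref{apri-lug}--\eqref{apri-lug1} giving the needed uniformity in $u_0$ over a bounded set. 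Actually it is cleaner to take $a$ larger than $L$ from the start: pick $a>L$ and correspondingly enlarge $T$ so that even trajectories of cost $\le a$ that start in $\overline D$ are forced into $B_\rho^0$ before leaving $D$; formally one shows there is $T$ with $K_T^{u_0,\delta}(a)$ contained in the set of paths that hit $B_\rho^0\cup D^c$ on $[0,T]$.

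Then the main estimate is a direct application of Theorem \ref{ldp-chi1}(i): with $\gamma$ chosen small enough that the $\gamma$-neighborhood of $K_T^{u_0,\delta}(a)$ still consists only of paths that meet $B_\rho^0\cup D^c$ before time $T$, the event $\{\sigma_\rho^{\epsilon,\delta,u_0}>T\}$ is contained in $\{d_{\mathcal C([0,T];\mathcal X_1)}(u^{\epsilon,\delta,u_0},K_T^{u_0,\delta}(a))\ge\gamma\}$, whence for $\epsilon<\epsilon_0$ and $\kappa$ chosen with $a-\kappa>L$,
\begin{align*}
\mathbb P(\sigma_\rho^{\epsilon,\delta,u_0}>T)\le \mathbb P\big(d_{\mathcal C([0,T];\mathcal X_1)}(u^{\epsilon,\delta,u_0},K_T^{u_0,\delta}(a))\ge\gamma\big)<\exp\Big(-\frac{a-\kappa}{\epsilon}\Big)\le\exp\Big(-\frac L\epsilon\Big),
\end{align*}
uniformly over $u_0\in\overline D$ (note $D\subset B_R\subset\mathcal X_1$, so $\|u_0\|_{\mathcal X_1}\le R=:\rho_0$ applies Theorem \ref{ldp-chi1}(i) with $\rho_0$ in place of $\rho$).

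The step I expect to be the main obstacle is establishing that a fixed finite $T$ works \emph{uniformly} over all $u_0\in D$ — i.e. that $K_T^{u_0,\delta}(a)$ is, for a single $T$, contained in the family of paths hitting $B_\rho^0\cup D^c$ by time $T$, uniformly in $u_0$. The contraction rate $\alpha_1-2|\lambda|$ is uniform, which handles the zero-cost flow; but one must also control how a positive-cost perturbation (of action $\le a$) can delay entry into $B_\rho^0$. The natural device is exactly the Duhamel/Gronwall computation used in Lemma \ref{lm-0}: running the functional $\mathcal H(L_{u_0}^\delta(g))=\tfrac12\|L_{u_0}^\delta(g)\|_{\mathcal X_1}^2$ through its differential inequality gives $\mathcal H(L_{u_0}^\delta(g)(t))\le e^{-(\alpha_1-2|\lambda|)t}\mathcal H(u_0)+\int_0^t e^{-(\alpha_1-2|\lambda|)(t-s)}2R^2|h(s)|\,ds$, and the integral term is bounded by $2R^2\sqrt{a}\,(\text{const})$ uniformly when $\tfrac12\int_0^T|h|^2\le a$; hence for $T$ large (depending on $R,\rho,a,\alpha_1,|\lambda|$ only) one forces $\mathcal H(L_{u_0}^\delta(g)(T))<\tfrac12\rho^2$, i.e. the path is inside $B_\rho^0$ at time $T$ unless it already left $D$ earlier. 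This, together with the continuity of $L_{u_0}^\delta$ (Lemma \ref{lm-con-ske}) to pass from $K_T^{u_0,\delta}(a)$ to its $\gamma$-neighborhood, closes the argument.
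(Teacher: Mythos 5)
Your overall strategy matches the paper's: show that every path of action at most $a>L$ starting in $\overline D$ must reach $B_\rho^0\cup D^c$ before a fixed time $T$, deduce that $\{\sigma_\rho^{\epsilon,\delta,u_0}>T\}$ forces $u^{\epsilon,\delta,u_0}$ to be at distance $\ge\gamma$ from $K_T^{u_0,\delta}(a)$, and conclude with Theorem \ref{ldp-chi1}(i). However, the step you yourself flag as the main obstacle is where your argument actually breaks. From the Duhamel inequality
\begin{align*}
\mathcal H\big(L_{u_0}^{\delta}(g)(T)\big)\le e^{-(\alpha_1-2|\lambda|)T}\mathcal H(u_0)+\int_0^T e^{-(\alpha_1-2|\lambda|)(T-s)}\,2R^2|h(s)|\,ds,
\end{align*}
Cauchy--Schwarz bounds the integral term by $2R^2\sqrt{2a}\,(2\alpha_1-4|\lambda|)^{-1/2}$, a constant that does \emph{not} decay as $T\to\infty$. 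So "take $T$ large" only kills the first term; it cannot force $\mathcal H(L_{u_0}^{\delta}(g)(T))<\tfrac12\rho^2$, since for small $\rho$ and $a>L$ the second term dominates $\tfrac12\rho^2$. Concretely, a control with all its $L^2$-mass concentrated in the last unit of time (action exactly $a$) keeps the path outside $B_\rho^0$ at time $T$ no matter how large $T$ is, so your claimed containment of $K_T^{u_0,\delta}(a)$ in the set of paths that are in $B_\rho^0$ at time $T$ is false.

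What is missing is a bound showing that the action needed to \emph{remain} in the annulus grows with the length of the time interval. This is precisely what the paper establishes: using the uniform attraction time $T_1$ of the deterministic flow, it shows via a small-interval Gronwall estimate and iteration that any skeleton path staying in $\mathcal N^0(D\setminus B_\rho^0,\rho/8)$ must deviate from the zero-control flow by at least $\tfrac34\rho$ on each block $[kT_1,(k+1)T_1]$, which costs at least a fixed amount $M''>0$ of action per block; hence staying in the annulus up to $T=jT_1$ costs at least $jM''>2L$ once $j$ is large. (An alternative repair of your route is to integrate your Duhamel inequality in $t$: if $\mathcal H(y(t))\ge\tfrac12\rho^2$ for all $t\in[T_0,T]$, then integrating the forcing term over $[T_0,T]$ yields $\|h\|_{L^2([0,T])}\gtrsim \rho^2 R^{-2}(\alpha_1-2|\lambda|)\sqrt{T}$, which again exceeds any fixed budget for $T$ large.) Without such a time-growing lower bound on the cost, the one-shot final-time estimate does not deliver the lemma. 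Note also that the paper's fattened annulus $\mathcal T_\rho$ is what converts "the solution stays in $D\setminus B_\rho^0$" into "distance $\ge\rho/8$ from $K_T^{u_0,\delta}(2L)$"; your $\gamma$-neighborhood device plays the same role and is fine, but it only works once the containment above is correctly established.
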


\begin{proof}
The proof for the case that $u_0$ belongs to $B_{\rho}^0$ is straightforward. Below we focus on the case that $u_0\in D \backslash B_{\rho}^0.$
Since $D$ is uniformly attracted to zero by the deterministic flows \eqref{det-log-ode}, there exists a positive time $T_1>0$ such that for every $u_1$ in $\mathcal N^0(D \backslash B_{\rho}^0,\frac \rho 8)$ and $t\ge T_1$, $L_{u_1}^{\delta}(0)(t)\in B^0_{\frac \rho 8}.$
Notice that 
\begin{align*}
    \sup_{u_1\in \mathcal N^0(D\backslash B_{\rho}^0,\frac \rho 8)}\|L_{u_1}^{\delta}(0)\|_{\mathcal C([0,T_1],\mathcal X_1)}\le C(T_1,|\lambda|,|\alpha_1|,R).
\end{align*}
Next, we prove that for sufficient large $T\ge T_1,$ 
\begin{align}\label{claim1}
\mathcal T_{\rho}&=\{y\in \mathcal C([0,T];\mathcal X_1)| \forall t\in [0,T], y(t)\in \mathcal N^0(D \backslash B_{\rho}^0,\frac \rho 8)\}\\\nonumber
&\subset (K_{T}^{u_0,\delta}(2L))^c.
\end{align}
It suffices to consider $y=L_{u_0}^{\delta}(g)\in \mathcal T_{\rho}.$ Indeed, we have that
\begin{align*}
    \|L_{u_0}^{\delta}(g)-L_{u_0}^{\delta}(0)\|_{\mathcal C([0,T_1];\mathcal X_1)}&\ge \|L_{u_0}^{\delta}(g)(T_1)-L_{u_0}^{\delta}(0)(T_1)\|_{\mathcal X_1}\ge \frac 34 \rho.
\end{align*}
On the other hand, there exists small $t_1>0$ such that 
\begin{align*}
&\|L_{u_0}^{\delta}(g)-L_{u_0}^{\delta}(0)\|_{\mathcal C([0,t_1];\mathcal X_1)}\\
&\le C(R',\delta,R) |\int_0^{t_1}|h(s)|ds|^{\frac {1}2} +\alpha_1 t_1 \|L_{u_0^{R'}}^{\delta}(g)-L_{u_0^{R'}}^{\delta}(0)\|_{\mathcal C([0,t_1];\mathcal X_1)}\\
&+|\lambda||\log(\delta)| t_1 \|L_{u_0}^{\delta}(g)-L_{u_0}^{\delta}(0)\|_{\mathcal C([0,t_1];\mathcal X_1)}\\
&\le C(R',\delta,R)t_1^{\frac 14} |\int_0^{t_1}|h(s)|^2ds|^{\frac {1}4} +\alpha t_1 \|L_{u_0}^{\delta}(g)-L_{u_0}^{\delta}(0)\|_{\mathcal C([0,t_1];\mathcal X_1)}\\
&+|\lambda||\log(\delta)| t_1 \|L_{u_0}^{\delta}(g)-L_{u_0}^{\delta}(0)\|_{\mathcal C([0,t_1];\mathcal X_1)}.
\end{align*}
Letting $t_1$ sufficient small such that $|\lambda||\log(\delta)| t_1 +\alpha t_1 \le \frac 12$ and $C(R',\delta,R)t_1^{\frac 14}\le \frac 12$, we obtain that 
\begin{align*}
  \|L_{u_0}^{\delta}(g)-L_{u_0}^{\delta}(0)\|_{\mathcal C([0,t_1];\mathcal X_1)}&\le |\int_0^{t_1}|h(s)|^2ds|^{\frac {1}4}.
\end{align*}
Then by iterating the above estimate for each small interval $[kt_1,(k+1)t_1]$ for $k=1,\cdots, [T_1/t_1 -1],$ ($[\cdot]$ is the floor function), it follows that 
\begin{align*}
  \|L_{u_0}^{\delta}(g)-L_{u_0}^{\delta}(0)\|_{\mathcal C([0,T_1];\mathcal X_1)}&\le 2^{[T_1/t_1]+1}\|h\|_{L^2([0,T_1];\mathbb R)}^{\frac 12}.
\end{align*}
Thus, we conclude that 
\begin{align*}
   \frac 1{2^{4[T/t_1]+5}} (\frac {3}4 \rho)^4 \le  \frac 12 \|h\|_{L^2([0,T_1];\mathbb R)}^2. 
\end{align*}
Similarly, for any $[T_1,2T_1]$, by the inverse triangle inequality and the fact that $0$ is an attractor of the deterministic flow,
one has that 
\begin{align*}
&\|L_{L_{u_0}^{\delta}(g)(T_1)}^{\delta}(g)-L_{{L_{u_0}^{\delta}(g)(T_1)}}^{\delta}(0)\|_{\mathcal C([0,T_1];\mathcal X_1)}\ge \frac {1}2\rho.
\end{align*}
This also implies that 
\begin{align*}
\frac 12 \|h\|_{L^2([T_1,2T_1];\mathbb R)}^2\ge  \frac 1{2^{4[T/t_1]+5}} (\frac {1}2 \rho)^4=:M''.
\end{align*}
Therefore, we obtain that 
\begin{align*}
    \frac 12 \|h\|^2_{L^2([0,2T_1];\mathbb R)}\ge 2M''.
\end{align*}
For any $T>0$, there exists positive number $j$  such that $T>j T_1$, iterating the above arguments, one has $\frac 12 \|h\|^2_{L^2([0,jT_1];\mathbb R)}\ge jM''.$ 
To obtain \eqref{claim1}, we will take $jM''>2L.$

Finally, we are able to show the desired result since 
\begin{align*}
    \mathbb P(\sigma_{\rho}^{\epsilon,\delta,u_0}>T)
    &=\mathbb P(\forall t\in [0,T], u^{\epsilon,\delta,u_0}\in D\backslash B_{\rho}^0)\\
    &=\mathbb P(d_{\mathcal C([0,T];\mathcal X_1)}(u^{\epsilon,\delta,u_0},\mathcal T_{\rho}^c)> \frac {\rho}8)\\
    &\le \mathbb P(d_{\mathcal C([0,T];\mathcal X_1)}(u^{\epsilon,\delta,u_0},K_T^{u_0}(2L))\ge  \frac {\rho}8).
\end{align*} 
By Theorem \ref{ldp-chi1} $(i)$, it follows that for small $\epsilon>0$ and $\rho\le R$, 
\begin{align*}
    &\mathbb P(d_{\mathcal C([0,T];\mathcal X_1)}(u^{\epsilon,\delta,u_0},K_T^{u_0}(2L))\ge \frac {\rho} {8})
    \le 
    e^{-\frac {L}\epsilon}.
\end{align*}
\end{proof}

The below lemma indicates that at the time $\sigma_{\rho}^{\epsilon,u_0,\delta}$, the probability that $u^{\epsilon,u_0,\delta}$ escapes $D$ is at most exponentially small.

\begin{lm}\label{lm-2}
Let $\delta>0$, $\alpha_1>2|\lambda|$ and $\alpha=1$.  
For every $\rho>0$ such that $B_{\rho}^0\subset D$ and $u_0\in D$, there exists $L>0$ such that 
\begin{align*}
\lim\sup_{\epsilon\to 0}\epsilon\log \mathbb P\Big(u^{\epsilon,\delta,u_0}(\sigma_{\rho}^{\epsilon,\delta,u_0})\in \partial D\Big)\le -L.
\end{align*}
\end{lm}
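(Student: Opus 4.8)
The plan is to combine Lemma \ref{lm-1}, a geometric observation on the deterministic flow, and the upper bound of Theorem \ref{ldp-chi1}(i). If $u_0\in B_\rho^0$, then $\sigma_\rho^{\epsilon,\delta,u_0}=0$ and $u^{\epsilon,\delta,u_0}(0)=u_0\notin\partial D$, so the event in question is empty and there is nothing to prove; hence assume $u_0\in D\setminus B_\rho^0$. Fix an arbitrary $L_1>0$ and apply Lemma \ref{lm-1} to obtain $T>0$ and $\epsilon_0>0$ (depending on $\rho,L_1$) such that $\mathbb P(\sigma_\rho^{\epsilon,\delta,u_0}>T)\le\exp(-L_1/\epsilon)$ for all $\epsilon\in(0,\epsilon_0)$ and all $u_0\in D$. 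Consequently
\[
\mathbb P\big(u^{\epsilon,\delta,u_0}(\sigma_\rho^{\epsilon,\delta,u_0})\in\partial D\big)\le \exp(-L_1/\epsilon)+\mathbb P\big(\sigma_\rho^{\epsilon,\delta,u_0}\le T,\ u^{\epsilon,\delta,u_0}(\sigma_\rho^{\epsilon,\delta,u_0})\in\partial D\big),
\]
and it remains to control the last probability.

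Next I would use a geometric fact about the deterministic flow. Let $y_0:=L_{u_0}^\delta(0)$ denote the solution of \eqref{det-log-ode} started at $u_0$, i.e.\ the skeleton solution with control $g\equiv 0$ and $V=0$; it takes values in $\mathcal X_1$ and, since $\alpha_1>2|\lambda|$, leaves $D$ invariant. Hence the arc $\Gamma_T:=\{y_0(t):0\le t\le T\}$ is a compact subset of the open set $D$, so $\beta_0:=\operatorname{dist}_{\mathcal X_1}(\Gamma_T,\partial D)>0$. On the event $\{\sigma_\rho^{\epsilon,\delta,u_0}\le T,\ u^{\epsilon,\delta,u_0}(\sigma_\rho^{\epsilon,\delta,u_0})\in\partial D\}$, writing $\tau:=\sigma_\rho^{\epsilon,\delta,u_0}\le T$, we have $u^{\epsilon,\delta,u_0}(\tau)\in\partial D$ while $y_0(\tau)\in\Gamma_T$, so $\|u^{\epsilon,\delta,u_0}(\tau)-y_0(\tau)\|_{\mathcal X_1}\ge\beta_0$, whence $\|u^{\epsilon,\delta,u_0}-y_0\|_{\mathcal C([0,T];\mathcal X_1)}\ge\beta_0$. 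Thus the last probability above is at most $\mathbb P\big(\|u^{\epsilon,\delta,u_0}-y_0\|_{\mathcal C([0,T];\mathcal X_1)}\ge\beta_0\big)$.

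Finally, I would estimate this using Theorem \ref{ldp-chi1}(i). The key point is that $y_0$ is the unique zero of $I_{u_0}^{\delta,T}$: if $I_{u_0}^{\delta,T}(w)=0$ there are $g_n$ with $L_{u_0}^\delta(g_n)=w$ and $\|g_n\|_{\mathcal H_P}\to 0$, hence $g_n\to 0$ uniformly and, by continuity of the skeleton map (Lemma \ref{lm-con-ske}, together with the $\mathcal X_1$-continuity underlying Theorem \ref{ldp-chi1}), $w=L_{u_0}^\delta(0)=y_0$. Since the level sets $K_T^{u_0,\delta}(a)$ are compact and decrease to $\{y_0\}$ as $a\downarrow 0$, one may fix $a\in(0,1]$ with $K_T^{u_0,\delta}(a)\subset\{w:\|w-y_0\|_{\mathcal C([0,T];\mathcal X_1)}<\beta_0/2\}$, so that $\{\|u^{\epsilon,\delta,u_0}-y_0\|_{\mathcal C([0,T];\mathcal X_1)}\ge\beta_0\}\subset\{d_{\mathcal C([0,T];\mathcal X_1)}(u^{\epsilon,\delta,u_0},K_T^{u_0,\delta}(a))\ge\beta_0/2\}$. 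Applying Theorem \ref{ldp-chi1}(i) with its parameter $\rho$ taken equal to $R\ge\|u_0\|_{\mathcal X_1}$, and with $\widetilde a=a$, $\kappa=a/2$, $\gamma=\beta_0/2$, yields, for all sufficiently small $\epsilon$, that this probability is $<\exp(-a/(2\epsilon))$. Combining the two bounds gives $\mathbb P(u^{\epsilon,\delta,u_0}(\sigma_\rho^{\epsilon,\delta,u_0})\in\partial D)\le\exp(-L_1/\epsilon)+\exp(-a/(2\epsilon))$, hence the claim with $L:=\min(L_1,a/2)>0$. The main obstacle is this last step: one needs compactness of the level sets $K_T^{u_0,\delta}(a)$ in $\mathcal C([0,T];\mathcal X_1)$ and continuity of $L_{u_0}^\delta$ as an $\mathcal X_1$-valued map, both of which belong to the LDP machinery behind Theorem \ref{ldp-chi1} rather than to the weaker $H$-level statements of Proposition \ref{prop-ldp}; the flow-invariance of $D$ used to get $\beta_0>0$ is exactly the dissipativity $\alpha_1>2|\lambda|$ already exploited in Lemma \ref{lm-0}.
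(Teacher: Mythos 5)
Your argument is correct, and it reaches the same goal by a somewhat different route than the paper. The paper's proof makes a single comparison: it takes $T=\inf\{t\ge 0:\,L_{u_0}^{\delta}(0)(t)\in B^0_{\rho/2}\}$ and bounds $\mathbb P(u^{\epsilon,\delta,u_0}(\sigma_{\rho}^{\epsilon,\delta,u_0})\in\partial D)$ directly by $\mathbb P\big(\|u^{\epsilon,\delta,u_0}-L_{u_0}^{\delta}(0)\|_{\mathcal C([0,T];\mathcal X_1)}\ge \min(\tfrac\rho2,\tfrac{d(0,\partial D)}2)\big)$ (this simultaneously covers the late-exit and boundary-exit cases), and then invokes the $\mathcal X_1$-LDP upper bound, asserting that the infimum of the rate functional over that deviation set is positive. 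You instead split off the event $\{\sigma_{\rho}^{\epsilon,\delta,u_0}>T\}$ via Lemma \ref{lm-1}, replace the paper's fixed threshold by the distance $\beta_0$ from the deterministic arc $\Gamma_T$ to $\partial D$, and convert the deviation estimate into an explicit exponential bound through Theorem \ref{ldp-chi1}(i), using compactness of the level sets $K_T^{u_0,\delta}(a)$ in $\mathcal C([0,T];\mathcal X_1)$ and the fact that $L_{u_0}^{\delta}(0)$ is the unique zero of $I_{u_0}^{\delta,T}$. What each buys: the paper's version is shorter, while yours actually supplies the positivity that the paper only asserts (its constant $L=\inf\|h\|^2_{L^2}>0$ is stated without proof, and your shrinking-level-set argument is the standard way to justify it), and your trajectory-dependent $\beta_0$ is more careful than the paper's $\min(\rho/2,d(0,\partial D)/2)$ when $u_0$ lies close to $\partial D$, where the deterministic path need not stay at distance $d(0,\partial D)/2$ from the boundary. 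Two small points to keep in mind: Lemma \ref{lm-1} is stated for $\rho$ small enough, so for a general $\rho$ with $B_\rho^0\subset D$ you should apply it with some $\rho'\le\rho$ and use $\sigma_{\rho'}^{\epsilon,\delta,u_0}\ge\sigma_{\rho}^{\epsilon,\delta,u_0}$; and your use of flow-invariance of $D$ and of the $\mathcal X_1$-valued LDP inherits the paper's standing assumptions ($\alpha_1>2|\lambda|$, $\delta>0$, $d\le 2$, $V=0$), which is consistent with how the paper itself argues.
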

\begin{proof}
When $u_0\in B_{\rho}^0$, the desired bound is trivial. Thus we only deal with the case that $u_0\in D\backslash B_{\rho}^0$. 
Since zero is the attractor of the deterministic flow, define $T=\inf\{t\ge 0| L_{u_0}^{\delta}(0)(t)\in B^0_{\frac \rho 2}\}$ and it holds that 
 \begin{align*}
     \mathbb P\Big(u^{\epsilon,\delta,u_0}(\sigma_{\rho}^{\epsilon,\delta,u_0})\in \partial D\Big)
     \le \mathbb P\Big( \|u^{\epsilon,\delta,u_0}-L_{u_0}^{\delta}(0)\|_{\mathcal C([0,T];\mathcal X_1)}\ge \min(\frac \rho2,\frac {d(0,\partial D)}2)\Big).
 \end{align*}
 By the LDP, we get 
 \begin{align*}
\lim\sup_{\epsilon \to 0}\epsilon \log \mathbb P\Big( \|u^{\epsilon,\delta,u_0}-L_{u_0}^{\delta}(0)\|_{\mathcal C([0,T];\mathcal X_1)}\ge \min(\frac \rho2,\frac {d(0,\partial D)}2)\Big)\le -L,
 \end{align*}
 where 
\begin{align*}
L=\inf_{\|L_{u_0}^{\delta}(0)-L_{u_0}^{\delta}(g)\|_{\mathcal C([0,T];\mathcal X_1)}\ge \min(\frac \rho2,\frac {d(0,\partial D)}2)} \|h\|^2_{L^2([0,T];\mathbb R)}>0.
\end{align*}
\end{proof}

\begin{lm}\label{lm-4}
Let $\delta>0$, $\alpha_1>2|\lambda|$ and $\alpha=1$.      For every $\rho>0$ and $L>0$ such that $B_{2\rho}^0\subset D$, there exists $T<+\infty$ such that 
  \begin{align*}
     \lim\sup_{\epsilon \to 0} \epsilon \log \sup_{u_0\in S_{\rho}^0} \mathbb P\Big(\sup_{t\in [0,T]}(\mathcal H(u^{\epsilon,\delta,u_0})-\mathcal H(u_0)) \ge \frac 3 {2} \rho^2\Big) \le -L,
  \end{align*}
  where $S_{\rho}^0$ is the sphere in $\mathcal X_1.$
\end{lm}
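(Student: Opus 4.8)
The plan is to deduce the bound from the uniform large deviation upper estimate of Theorem \ref{ldp-chi1}(i), after showing that the stated event forces the solution to deviate macroscopically from the low-action level sets $K_T^{u_0,\delta}(\cdot)$, and that — crucially, only if $T$ is chosen \emph{small} — pushing the energy $\mathcal H$ from the sphere level up to $2\rho^2$ within time $T$ costs an arbitrarily large action. For $u_0 \in S_\rho^0$ we have $\mathcal H(u_0) = \tfrac12\|u_0\|_{\mathcal X_1}^2 = \tfrac12\rho^2$, so the event $\{\sup_{t\in[0,T]}(\mathcal H(u^{\epsilon,\delta,u_0}(t)) - \mathcal H(u_0)) \ge \tfrac32\rho^2\}$ coincides with $\{\sup_{t\in[0,T]}\mathcal H(u^{\epsilon,\delta,u_0}(t)) \ge 2\rho^2\}$, i.e. the path enters $\{z : \|z\|_{\mathcal X_1} \ge 2\rho\} \subset (B_{2\rho}^0)^c$.

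First I would record an energy-growth estimate for the skeleton. Running the chain-rule computation of the proof of Lemma \ref{lm-0} on $t \mapsto \mathcal H(L_{u_0}^{\delta}(g)(t))$, integrating by parts in the weighted term and now bounding it by $|\langle 2xz,\bi\nabla z\rangle\,h| \le (\|xz\|^2 + \|\nabla z\|^2)|h| \le \|z\|_{\mathcal X_1}^2|h| = 2\mathcal H(z)|h|$, estimating $|\lambda|f_\delta'(|z|^2)|z|^2$ in the nonlinear term and absorbing it into the damping via $\alpha_1 > 2|\lambda|$, I get, with $h = \dot g$,
\begin{align*}
\frac{d}{dt}\mathcal H\big(L_{u_0}^{\delta}(g)(t)\big) \le 2|h(t)|\,\mathcal H\big(L_{u_0}^{\delta}(g)(t)\big),
\end{align*}
hence by Grönwall $\mathcal H(L_{u_0}^{\delta}(g)(t)) \le \mathcal H(u_0)\exp\!\big(2\int_0^t|h(s)|\,ds\big)$ for all $t \ge 0$. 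Thus if $\tfrac12\|h\|_{L^2([0,T];\mathbb R)}^2 \le \widetilde a$ then $\int_0^t|h| \le \sqrt{2\widetilde a T}$ for $t \le T$, and every $z \in K_T^{u_0,\delta}(\widetilde a)$ satisfies $\sup_{t\in[0,T]}\mathcal H(z(t)) \le \tfrac12\rho^2 e^{2\sqrt{2\widetilde a T}}$.

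Next I would perform the geometric reduction and apply the LDP. Given $L > 0$, I fix $T$ so small that $2\sqrt{2(2L)T} < \ln 3$ — for instance $T = (\ln 3)^2/(32L)$ — and put $\widetilde a = 2L$. By the previous step, $\sup_{t\in[0,T]}\|z(t)\|_{\mathcal X_1} < \sqrt3\,\rho$ for every $z \in K_T^{u_0,\delta}(2L)$, while any $y \in \mathcal C([0,T];\mathcal X_1)$ with $\sup_{t\in[0,T]}\mathcal H(y(t)) \ge 2\rho^2$ has $\|y(t^\ast)\|_{\mathcal X_1} \ge 2\rho$ for some $t^\ast \in [0,T]$; hence $d_{\mathcal C([0,T];\mathcal X_1)}(y, K_T^{u_0,\delta}(2L)) \ge (2 - \sqrt3)\rho =: \gamma > 0$. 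Therefore, for every $u_0 \in S_\rho^0$,
\begin{align*}
\Big\{\sup_{t\in[0,T]}\!\big(\mathcal H(u^{\epsilon,\delta,u_0}(t)) - \mathcal H(u_0)\big) \ge \tfrac32\rho^2\Big\} \subseteq \Big\{d_{\mathcal C([0,T];\mathcal X_1)}\big(u^{\epsilon,\delta,u_0}, K_T^{u_0,\delta}(2L)\big) \ge \gamma\Big\}.
\end{align*}
Since $S_\rho^0 \subset \{\|u_0\|_{\mathcal X_1} \le \rho\}$ and $d \le 2$, Theorem \ref{ldp-chi1}(i) with $a = \widetilde a = 2L$, this $\gamma$, and $\kappa = L$ provides $\epsilon_0 > 0$ so that for all $\epsilon \in (0,\epsilon_0)$ and all $u_0 \in S_\rho^0$, $\mathbb P(d_{\mathcal C([0,T];\mathcal X_1)}(u^{\epsilon,\delta,u_0}, K_T^{u_0,\delta}(2L)) \ge \gamma) < \exp(-(2L - L)/\epsilon) = e^{-L/\epsilon}$; taking $\epsilon\log\sup_{u_0 \in S_\rho^0}(\cdot)$ and letting $\epsilon \to 0$ concludes.

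I expect the only genuinely delicate point to be the calibration of $T$: the estimate is simply false with arbitrary $L$ when $T$ is large — there the cheapest control raising $\mathcal H$ to $2\rho^2$ has action bounded by a fixed constant independent of $T$ — so $T$ must shrink like $1/L$, which is exactly what makes the small-time energy inequality of the first step usable. That inequality, including the It\^o/Stratonovich bookkeeping for the weighted norm and the $f_\delta'$-term, is already contained in the computation of Lemma \ref{lm-0}; and the non-compactness of $S_\rho^0$ is harmless because Theorem \ref{ldp-chi1}(i) is uniform over the ball $\{\|u_0\|_{\mathcal X_1} \le \rho\}$.
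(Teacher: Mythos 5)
Your proposal is correct, but it proves the lemma by a genuinely different route than the paper. The paper argues directly on the stochastic evolution: it expands $\mathcal H(u^{\epsilon,\delta,u_0})$ by It\^o's formula, reduces the problem to the tail of the martingale term $Z(t)=\int_0^t\<2xu^{\epsilon,\delta,u_0},\bi\nabla u^{\epsilon,\delta,u_0}\>dW$, and controls $\mathbb P(\sup_{[0,T]}|Z|\ge \rho^2/\sqrt{\epsilon})$ via the exponential tail estimate for stochastic integrals together with Chebyshev's inequality at order $q=1/\epsilon$ and the moment bounds on $\mathcal H(u^{\epsilon,\delta,u_0})$, finally taking $T_{\rho,L}$ small (of order $1/(\rho^4L')$). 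You instead bypass the stochastic calculus entirely: a Gr\"onwall estimate on the skeleton, $\mathcal H(L_{u_0}^{\delta}(g)(t))\le \mathcal H(u_0)\exp(2\int_0^t|h|)$ (valid since $\alpha_1>2|\lambda|$ makes the damping absorb the $f_\delta'$-term, exactly as in the computation of Lemma \ref{lm-0}, though with the multiplicative bound $2\mathcal H|h|$ in place of the paper's additive bound $2R^2|h|$), shows that for $T\sim 1/L$ every path in $K_T^{u_0,\delta}(2L)$ stays below energy $\tfrac32\rho^2$, so the event forces $d_{\mathcal C([0,T];\mathcal X_1)}(u^{\epsilon,\delta,u_0},K_T^{u_0,\delta}(2L))\ge(2-\sqrt3)\rho$, and Theorem \ref{ldp-chi1}(i) with $\widetilde a=2L$, $\kappa=L$ closes the argument. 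Your version is shorter, is the classical Freidlin--Wentzell-style argument, and yields a $T$ depending only on $L$ (not on $\rho$); the price is that it leans on the full uniform LDP in $\mathcal C([0,T];\mathcal X_1)$, so it inherits the hypotheses of Theorem \ref{ldp-chi1} (in particular $d\le 2$ and $u_0\in\mathcal X_1$), whereas the paper's proof of this particular lemma is self-contained, needing only moment bounds and a martingale inequality, and would survive even where the $\mathcal X_1$-LDP is unavailable. Both choices are legitimate here since the surrounding exit analysis (e.g.\ Lemma \ref{lm-1}) already invokes Theorem \ref{ldp-chi1}; your closing remark that $T$ must shrink as $L$ grows is also accurate and consistent with the paper's choice of $T_{\rho,L}$.
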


\begin{proof}
The evolution of $\mathcal H(u^{\epsilon,\delta, u_0})$ yields that 
\begin{align*}
 &\mathcal H(u^{\epsilon,\delta, u_0}(t))- \mathcal H(u_0)\\
 =&\sqrt{\epsilon}\int_0^t\<\bi 2x u^{\epsilon,\delta, u_0},\nabla u^{\epsilon,\delta, u_0}\>dW(s)
 -\frac 12\int_0^t \epsilon\<(1+|x|^2)u^{\epsilon,\delta, u_0},\Delta^2 u^{\epsilon,\delta, u_0}\>ds\\
 &+\frac 12\int_0^t \epsilon \<(1+|x|^2)\Delta u^{\epsilon,\delta, u_0}, \Delta u^{\epsilon,\delta, u_0}\>ds
 -\alpha_1 \int_0^t \mathcal H(u^{\epsilon,\delta, u_0}(s))ds\\
 &+\int_0^t \bi 2 \lambda \<\partial_x f_{\delta}(|u^{\epsilon,\delta, u_0}|^2) Re(\overline{u^{\epsilon,\delta, u_0}}\nabla u^{\epsilon,\delta, u_0}) u^{\epsilon,\delta, u_0}, \nabla  u^{\epsilon,\delta, u_0}\>ds.
\end{align*}
By integration by parts, Holder's and Young's inequalities, it is enough to show for $\epsilon\in(0,\epsilon_0)$ with small $\epsilon_0>0$ and for small $T(\rho,L)<1$, 
\begin{align*}
     \lim\sup_{\epsilon \to 0} \epsilon \log \sup_{u_0\in S_{\rho}^0} \mathbb P\Big(\sup_{t\in [0,T(\rho,L)]} |Z(t)| \ge \frac {\rho^2}{\sqrt{\epsilon}} \Big) \le -L,
  \end{align*}
  where $Z(t)=\int_0^t\<2x u^{\epsilon,\delta,u_0}, \bi \nabla u^{\epsilon,\delta,u_0}\>dW(t)$.
By using \cite[Proposition 4.31]{DZ14}, it holds that for any $b>0$, 
\begin{align*}
\mathbb P(\sup_{[0,T(\rho,L)]}|Z(t)|\ge \frac {\rho^2}{\sqrt{\epsilon}})
&\le \frac {b\epsilon}{\rho^4}
+\mathbb P( \int_0^{T(\rho,L)}4\|xu^{\epsilon,\delta,u_0}\|^2\|\nabla u^{\epsilon,\delta,u_0}\|^2 ds\ge b ).
\end{align*}
Note that by Chebyshev's inequality and the moment bound of $H(u^{\epsilon,\delta,u_0})$, it holds that for $q=\frac 1{\epsilon},$ 
\begin{align*}
    \mathbb P(4 \int_0^{T(\rho,L)}\|xu^{\epsilon,\delta,u_0}\|^2\|\nabla u^{\epsilon,\delta,u_0}\|^2 ds\ge b )
    &\le \frac {4^qT_{\rho,L}^{q}\sup\limits_{t\in [0,T_{\rho,L}]}\E[H(u^{\epsilon,\delta,u_0})^{2q}]}{b^{q}}\\
    &\le \frac {4^qT_{\rho,L}^{q}
{C(\lambda_1,\alpha_1,T_{\rho,L})^{q}q\rho^{4q}}}{b^{q}}.
\end{align*}
Taking $b=(\frac 1{L'})^{\frac {1}\epsilon}$ for a sufficient large $L'$, it follows that 
\begin{align*}
&\epsilon \log \sup_{u_0\in S_{\rho}^0} \mathbb P( 4\int_0^{T(\rho,L)}\|xu^{\epsilon,\delta,u_0}\|^2\|\nabla u^{\epsilon,\delta,u_0}\|^2 ds\ge b ) \\
\le& \log(4T_{\rho,L})-\log(b)+ \log(\rho^4)+\epsilon\log(\frac 1\epsilon) +\log(C(\lambda_1,\alpha_1,T_{\rho,L})).
\end{align*}
As a consequence, 
\begin{align*}
 &\lim\sup_{\epsilon \to 0}\epsilon \log \sup_{u_0\in S_{\rho}^0} \mathbb P\Big(\sup_{t\in [0,T(\rho,L)]} |Z(t)| \ge \frac {\rho^2}{\sqrt{\epsilon}} \Big)\\
 \le& \max\Big(\log(\frac 1{L'})-\log(\rho^4), \log(4T_{\rho,L})-\log(\frac 1{L'})+\log(\rho^4)+\log(C(\lambda_1,\alpha_1,T_{\rho,L}))\Big).
\end{align*}
To complete the proof, one just takes $T_{\rho,L}\le \frac {1}{4C(\lambda_1,\alpha_1,T_{\rho,L})\rho^4L'}$ with $L'$ sufficient large.

\end{proof}

Now we are able to present the theorem to characterize the first exist time from a given domain $D$ for the regularized   problem ($\delta>0$). Its proof is put in the appendix. 

\begin{prop}\label{tm-exit}
Let $\delta>0$, $\alpha_1>2|\lambda|$ and $\alpha=1$. 
For every $u_0\in D$ and small $\kappa>0$, there exists positive $L$ such that 
\begin{align}\label{tm-exit1}
    \lim\sup_{\epsilon \to 0} \epsilon \log \mathbb P(\tau^{\epsilon,\delta,u_0}\notin (e^{\frac {\underline{M^{\delta}}-\kappa}{\epsilon}},e^{\frac {\overline{M^{\delta}}+\kappa }\epsilon }))\le -L, 
\end{align}
and for every $u_0\in D$,
\begin{align}\label{tm-exit2}
\underline{M^{\delta}}\le\lim\inf_{\epsilon \to 0}\epsilon \log \E [\tau^{\epsilon,\delta,u_0}]\le \lim\sup_{\epsilon \to 0}\epsilon \log \E [\tau^{\epsilon,\delta,u_0}]\le  \overline{M^{\delta}}. 
\end{align}
Furthermore, for every small $\kappa>0$, there exists $L>0$ such that 
\begin{align}\label{tm-exit3}
   \lim\sup_{\epsilon\to 0}\epsilon \log\sup_{u_0\in D}\mathbb P\Big(\tau^{\epsilon,\delta,u_0}\ge e^{\frac {\overline{M^{\delta}}+\kappa}\epsilon}\Big)\le -L,\\ \label{tm-exit4}
   \lim\sup_{\epsilon \to 0}\log \sup_{u_0\in D}\E[\tau^{\epsilon,\delta,u_0}]\le \overline{M^{\delta}}.
\end{align}
\end{prop}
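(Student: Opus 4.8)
The plan is to run the Freidlin--Wentzell exit-time scheme (in the spirit of \cite{Gau08,DZ98lp,MR722136}), feeding in the preparatory results already at hand: the uniform LDP of Theorem~\ref{ldp-chi1} on $\mathcal{C}([0,T];\mathcal{X}_{1})$, the relaxation bound of Lemma~\ref{lm-1}, the no-premature-escape bound of Lemma~\ref{lm-2}, the short-time confinement bound of Lemma~\ref{lm-4}, and the strict positivity $\inf_{\delta>0}\underline{M^{\delta}}>0$ of Lemma~\ref{lm-0}. Throughout, fix $\delta>0$, $\alpha_{1}>2|\lambda|$, $\alpha=1$ and a small $\kappa\in(0,\underline{M^{\delta}})$; choose $\rho>0$ small enough that $\overline{B_{2\rho}^{0}}\subset D$, that $\overline{B_{\rho}^{0}}\subset D_{-\rho}$, that $M_{\rho}^{\delta}\ge\underline{M^{\delta}}-\kappa/4$ (possible since $\underline{M^{\delta}}=\lim_{\rho\to0}M_{\rho}^{\delta}$), and that $u_{0}\mapsto L_{u_{0}}^{\delta}(g)$ is continuous on $\mathcal{C}([0,T];\mathcal{X}_{1})$ uniformly for $\|g\|_{\mathcal{H}_{P}}$ bounded (this Lipschitz-type dependence follows from the mild formulation, \eqref{growth}, \eqref{first-order}, \eqref{loc-lip} and the a priori bounds \eqref{apri-lug}--\eqref{apri-lug1}), so that a near-optimal escape path from $0$ transports to a near-optimal escape path from any point of $\overline{B_{\rho}^{0}}$.

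\textbf{Lower bounds} (the $\{\tau^{\epsilon,\delta,u_{0}}\le e^{(\underline{M^{\delta}}-\kappa)/\epsilon}\}$ half of \eqref{tm-exit1} and the left inequality of \eqref{tm-exit2}). I would reduce to the Markov chain $Z_{n}=u^{\epsilon,\delta,u_{0}}(\theta_{n})$ on $S_{\rho}^{0}\cup\partial D$, where $\theta_{0}$ is the first time the process hits $\overline{B_{\rho}^{0}}\cup D^{c}$ and, for $n\ge0$, $\theta_{n+1}$ is the first time after re-entering $\overline{B_{\rho/2}^{0}}$ that it is on $S_{\rho}^{0}\cup\partial D$ (the chain being absorbed on $\partial D$ at step $N_{\epsilon}$, so that $\tau^{\epsilon,\delta,u_{0}}=\theta_{N_{\epsilon}}$). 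Three facts drive the estimate: escaping $D$ during the first relaxation has probability $\le e^{-L_{2}/\epsilon}\to0$ by Lemma~\ref{lm-2}; for each $z\in S_{\rho}^{0}$ a single excursion escapes $D$ with probability $\le e^{-(M_{\rho}^{\delta}-\kappa/2)/\epsilon}\le e^{-(\underline{M^{\delta}}-3\kappa/4)/\epsilon}$, since reaching $(D_{-\rho})^{c}$ costs at least $M_{\rho}^{\delta}$, so every $y\in K_{T}^{z,\delta}(M_{\rho}^{\delta}-\kappa/2)$ stays in $D_{-\rho}$ and hence at $\mathcal{X}_{1}$-distance $\ge\rho$ from any trajectory that reaches $\partial D$, whence Theorem~\ref{ldp-chi1}(i) applies windowwise while Lemma~\ref{lm-4} (with the uniform attraction of $D$ to $0$) confines the excursion on the short timescale $T$ so that the windowwise bounds chain; and, by a second use of Lemma~\ref{lm-4} at a large deviation level $L>\underline{M^{\delta}}$, which fixes a short-excursion floor $t_{0}>0$, each excursion lasts at least $t_{0}$ except with probability $e^{-L/\epsilon}$. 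A union bound then gives that, with probability tending to $1$, $N_{\epsilon}>e^{(\underline{M^{\delta}}-\kappa)/\epsilon}/t_{0}$ and the first that many excursions each last at least $t_{0}$, so $\tau^{\epsilon,\delta,u_{0}}=\theta_{N_{\epsilon}}\ge e^{(\underline{M^{\delta}}-\kappa)/\epsilon}$; this yields the first half of \eqref{tm-exit1} with $L=\kappa/4$, and $\E[\tau^{\epsilon,\delta,u_{0}}]\ge e^{(\underline{M^{\delta}}-\kappa)/\epsilon}\,\mathbb{P}(\tau^{\epsilon,\delta,u_{0}}\ge e^{(\underline{M^{\delta}}-\kappa)/\epsilon})$ with $\kappa\downarrow0$ gives the left inequality of \eqref{tm-exit2}.

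\textbf{Upper bounds} (the $\{\tau^{\epsilon,\delta,u_{0}}\ge e^{(\overline{M^{\delta}}+\kappa)/\epsilon}\}$ half of \eqref{tm-exit1}, \eqref{tm-exit3}, \eqref{tm-exit4}, and the right inequality of \eqref{tm-exit2}). Pick $T_{1}>0$ and $g^{\ast}\in\mathcal{H}_{P}$ with $\tfrac12\|g^{\ast}\|_{\mathcal{H}_{P}}^{2}=I_{0}^{\delta,T_{1}}(L_{0}^{\delta}(g^{\ast}))\le\overline{M^{\delta}}+\kappa/8$ and $L_{0}^{\delta}(g^{\ast})(T_{1})\in(\overline{D})^{c}$; by openness of $(\overline{D})^{c}$ and the continuity fixed above, after shrinking $\rho$ the path $L_{u_{1}}^{\delta}(g^{\ast})$ still leaves $\overline{D}$ before $T_{1}$, with $I_{u_{1}}^{\delta,T_{1}}(L_{u_{1}}^{\delta}(g^{\ast}))\le\overline{M^{\delta}}+\kappa/4$, for every $u_{1}\in\overline{B_{\rho}^{0}}$; hence Theorem~\ref{ldp-chi1}(ii) gives $\inf_{u_{1}\in\overline{B_{\rho}^{0}}}\mathbb{P}(\tau^{\epsilon,\delta,u_{1}}\le T_{1})\ge e^{-(\overline{M^{\delta}}+\kappa/2)/\epsilon}$ for small $\epsilon$. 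Combining with Lemma~\ref{lm-1}, which carries any $u_{0}\in D$ to $\overline{B_{\rho}^{0}}\cup D^{c}$ within a fixed time $T_{2}$ up to probability $e^{-L'/\epsilon}$, the strong Markov property at multiples of $T_{0}:=T_{1}+T_{2}$ yields $\inf_{u_{0}\in D}\mathbb{P}(\tau^{\epsilon,\delta,u_{0}}\le T_{0})\ge q_{\epsilon}:=\tfrac12 e^{-(\overline{M^{\delta}}+\kappa/2)/\epsilon}$, hence $\sup_{u_{0}\in D}\mathbb{P}(\tau^{\epsilon,\delta,u_{0}}>kT_{0})\le(1-q_{\epsilon})^{k}$ for all $k\in\N$. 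Taking $k\sim e^{(\overline{M^{\delta}}+\kappa)/\epsilon}/T_{0}$ makes the right-hand side $\le\exp(-e^{\kappa/(2\epsilon)}/(4T_{0}))$, which is $\le e^{-L/\epsilon}$ for every $L$ once $\epsilon$ is small; this supplies the remaining half of \eqref{tm-exit1} and \eqref{tm-exit3}, uniformly in $u_{0}$. Finally $\sup_{u_{0}\in D}\E[\tau^{\epsilon,\delta,u_{0}}]\le T_{0}\sum_{k\ge0}(1-q_{\epsilon})^{k}=T_{0}/q_{\epsilon}=2T_{0}\,e^{(\overline{M^{\delta}}+\kappa/2)/\epsilon}$, so $\limsup_{\epsilon\to0}\epsilon\log\sup_{u_{0}\in D}\E[\tau^{\epsilon,\delta,u_{0}}]\le\overline{M^{\delta}}+\kappa/2$; letting $\kappa\downarrow0$ gives \eqref{tm-exit4} and the right inequality of \eqref{tm-exit2}.

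\textbf{Main obstacle.} The renewal bookkeeping is routine; the delicate point is the per-excursion escape estimate in the lower bound, i.e.\ converting the LDP lower bound of Theorem~\ref{ldp-chi1}(i) into a bound uniform in the excursion's starting point and valid in the $\mathcal{C}([0,T];\mathcal{X}_{1})$ topology, including the boundary-layer regime where the process lingers near $\partial D$ without having escaped. This forces a careful interplay of Lemma~\ref{lm-2} (escape is not cheap away from the attractor), Lemma~\ref{lm-4} (no escape and no return to $S_{\rho}^{0}$ on a short timescale, which makes the return chain well defined and supplies the excursion floor $t_{0}$), and the calibration of $\rho$ against $d(0,\partial D)$ and against $M_{\rho}^{\delta}\ge\underline{M^{\delta}}-\kappa/4$, using that all constants in Theorem~\ref{ldp-chi1} and Lemma~\ref{lm-1} are uniform over $\|u_{0}\|_{\mathcal{X}_{1}}\le\rho$ (respectively $u_{0}\in D$). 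The positivity $\inf_{\delta>0}\underline{M^{\delta}}>0$ of Lemma~\ref{lm-0} is what makes the timescale $e^{(\underline{M^{\delta}}-\kappa)/\epsilon}$ genuinely exponentially large for all small $\kappa>0$.
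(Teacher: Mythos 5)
Your proposal follows essentially the same Freidlin--Wentzell excursion scheme as the paper's proof: the upper bounds via a near-optimal escape path from $0$, continuity in the initial datum, Lemma \ref{lm-1}, the Markov property and a geometric-series bound; the lower bounds via the chain of excursions between $B_{\rho}^0$ and the outer sphere, with Theorem \ref{ldp-chi1}, Lemma \ref{lm-2} and Lemma \ref{lm-4} controlling, respectively, the per-excursion escape probability, the exit at $\sigma_{\rho}^{\epsilon,\delta,u_0}$, and the minimal excursion length. The only cosmetic differences are that you obtain \eqref{tm-exit3} directly from the geometric decay rather than from \eqref{tm-exit4} via Chebyshev's inequality, and that the control of overly long excursions, i.e.\ $\mathbb P(\sigma_{\rho}^{\epsilon,\delta,y}>T_4)$, should be attributed to Lemma \ref{lm-1} rather than to Lemma \ref{lm-4}, exactly as in the paper.
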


\section{Further discussions}
\label{sec-fur}
For the limit model, i.e., \eqref{reg-log-rands}  with $\delta=0$, there are still a lot of unclear parts on the random effect of the noise. Below we briefly discuss about some potential and interesting aspects in studying the effect of stochastic dispersion on the logarithmic Schr\"odinger equation.  

\subsection{Exit time and exit points} 

Similar to \cite{Gau08}, one can also formally characterize the exit points of the regularised problem.  However, it is still difficult to derive a rigorous result on the exit points of \eqref{reg-log-rands} with $\delta=0$ and $V=0$  since  Proposition \ref{tm-exit} may fail if $\delta \to 0$. Thanks to the special structures \eqref{mass-con}  and \eqref{ene-evo} when $\alpha_1>2|\lambda|$, we  have the following rough result on the exit points. 

\begin{prop}\label{prop-2}
Let $\delta\ge 0$, $\alpha_1>2|\lambda|$ and $\alpha=1$. For \eqref{reg-log-rands}, the exit from an open bounded domain $D$ containing $0$ in the interior of $\mathcal X_{1}$  appears in $\mathcal X_1$ if and only if the exit  appears in $L_1^2.$
\end{prop}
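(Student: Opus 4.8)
The plan is to exploit the mass conservation law \eqref{mass-con} together with the energy evolution \eqref{ene-evo} (in the version without the potential, $V=0$) to show that the $H^1$-seminorm of the solution stays controlled on any finite time horizon as long as the $L^2$-mass and the weighted $L_1^2$-norm do, so that an exit in the $\mathcal X_1 = H^1 \cap L_1^2$ topology can only be triggered by the $L_1^2$ component. Recall that $\mathcal H(u) = \tfrac12(\|u\|_{H^1}^2 + \|u\|_{L_1^2}^2)$, so exiting $D$ in $\mathcal X_1$ means $\|u^{\epsilon,\delta,u_0}(t)\|_{\mathcal X_1}$ leaves the range corresponding to $D$. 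The key observation is that from \eqref{mass-con} we have $\|u^{\epsilon,\delta,u_0}(t)\| = e^{-\alpha_1 t}\|u_0\| \le \|u_0\|$ for all $t\ge 0$, so the $L^2$-mass can never cause an exit; it only remains to compare the $\|\nabla \cdot\|$ and $\|\cdot\|_{L_1^2}$ components.

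The first step is to revisit the energy identity \eqref{ene-evo} with $V=0$. The stochastic term drops out because $\langle \bi \Delta v, \Delta v\rangle = 0$, and the two logarithmic contributions combine: using that $2\langle \bi \lambda \tfrac{\Re(\bar v \nabla v)}{|v|^2} v, \nabla v\rangle = \lambda \langle \bi f'_\delta(|v|^2)\Re(\bar v\nabla v) v, \nabla v\rangle$ type manipulation (as in the deterministic computation preceding Lemma \ref{lm-0}) and the bound $f'_\delta(r) r \le 2$ from \cite[Lemma 1]{CH23}, one obtains
\begin{align*}
  \frac{d}{dt}\|\nabla v^\delta(t)\|^2 \le (-2\alpha_1 + 4|\lambda|)\|\nabla v^\delta(t)\|^2 \,dt,
\end{align*}
hence, when $\alpha_1 > 2|\lambda|$, $\|\nabla u^{\epsilon,\delta,u_0}(t)\| \le \|\nabla u_0\|$ for all $t \ge 0$ as well, almost surely (the Stratonovich/Itô correction for the Laplacian noise vanishes by the same skew-symmetry). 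Thus the $H^1$-seminorm is a.s.\ nonincreasing in time. Consequently, if $u_0 \in D$ and at some time $\tau$ the solution satisfies $u^{\epsilon,\delta,u_0}(\tau) \notin D$, the defining inequalities of $D$ (which, being an open bounded neighborhood of $0$ with $D \subset B_R$ in $\mathcal X_1$, one may take to be monotone in each component) cannot be violated through the $H^1$-part, forcing the $L_1^2$-part of the norm to be the one responsible; conversely, an exit detected in $L_1^2$ is automatically an exit in $\mathcal X_1$ since $\|\cdot\|_{L_1^2} \le \|\cdot\|_{\mathcal X_1}$.

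The second step is to make the ``if and only if'' precise. For the direction ``exit in $L_1^2$ $\Rightarrow$ exit in $\mathcal X_1$'': this is immediate from $L_1^2 \subset \mathcal X_1$ with $\|\cdot\|_{L_1^2}\le\|\cdot\|_{\mathcal X_1}$, so leaving a domain in the weaker norm forces leaving it in the stronger one; more carefully, one should phrase $D$ through its $L_1^2$-projection $\pi_{L_1^2}(D)$ and $H^1$-behavior, and note $u^{\epsilon,\delta,u_0}(t) \in D$ requires both $\|\nabla u^{\epsilon,\delta,u_0}(t)\| \le \|\nabla u_0\| \le R$ (automatic by monotonicity) and the $L_1^2$-constraint. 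For the direction ``exit in $\mathcal X_1$ $\Rightarrow$ exit in $L_1^2$'': since $t\mapsto \|u^{\epsilon,\delta,u_0}(t)\|$ and $t\mapsto\|\nabla u^{\epsilon,\delta,u_0}(t)\|$ are both a.s.\ nonincreasing and start inside the range allowed by $D$, the only component of the $\mathcal X_1$-norm that can grow and thereby reach $\partial D$ is the weighted part $\|u^{\epsilon,\delta,u_0}(t)\|_{L_1^2}$; hence the exit time $\tau^{\epsilon,\delta,u_0}$ in $\mathcal X_1$ coincides with the first time the $L_1^2$-component exits its admissible set, which is by definition an exit in $L_1^2$.

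The main obstacle I anticipate is the bookkeeping around what precisely it means for a general open bounded domain $D \subset \mathcal X_1$ to be ``exited in $L_1^2$'' versus ``exited in $\mathcal X_1$'': the clean equivalence relies on the a.s.\ monotone decay of both $\|u^{\epsilon,\delta,u_0}(t)\|$ and $\|\nabla u^{\epsilon,\delta,u_0}(t)\|$, so one must argue that these decay estimates, which use the Itô formula, hold rigorously — this requires the approximation procedure alluded to in the proof of Theorem \ref{tm-1} to justify applying the chain rule to $\|\nabla u^{\epsilon,\delta,u_0}(t)\|^2$ with the singular logarithmic term when $\delta = 0$, together with the uniform-in-$\delta$ bounds \eqref{moment-x1} and the strong convergence $v^\delta \to v^0$ to pass to the limit. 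Modulo that standard (but tedious) regularization, the statement follows from the monotonicity of the two seminorms and the trivial norm comparison $\|\cdot\|_{L_1^2}\le\|\cdot\|_{\mathcal X_1}$.
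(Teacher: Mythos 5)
Your proposal is correct and follows essentially the same route as the paper: the paper's proof is precisely the observation that $\|u^{\epsilon,\delta}(t)\|\le e^{-\alpha_1 t}\|u_0\|$ and, for $\alpha_1>2|\lambda|$, $\|\nabla u^{\epsilon,\delta}(t)\|\le e^{-(\alpha_1-2|\lambda|)t}\|\nabla u_0\|$ (the noise term dropping out of \eqref{ene-evo} by skew-symmetry), so that only the $L_1^2$-component can drive the solution out of $D$. Only a cosmetic slip: you wrote $L_1^2\subset\mathcal X_1$, whereas the correct inclusion is $\mathcal X_1\subset L_1^2$; the norm comparison $\|\cdot\|_{L_1^2}\le\|\cdot\|_{\mathcal X_1}$ you actually use is the right one, so the argument is unaffected.
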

The proof is straightforward by noticing that $$\|u^{\epsilon,\delta}(t)\|\le e^{-\alpha_1 t}\|u_0\|, \; \|\nabla u^{\epsilon,\delta}(t)\| \le  e^{-(\alpha_1-2|\lambda|) t}\|\nabla u_0\|.$$  
Furthermore, we have the following finding on  the exit time of \eqref{reg-log-rands} with $\delta=0$ based on the exponential integrability property.  

\begin{prop}\label{prop-3}
Let $\delta=0$, $\alpha_1>2|\lambda|$ and $\alpha=1$. Assume that $R>\||x|u_0\|$. Then for any $T>0,$ it holds that 
\begin{align*}
   \limsup_{\epsilon \to 0} \epsilon \log \mathbb P(\tau^{\epsilon,u_0}\le T)=- {(R^2-\||x| u_0\|^2)},
\end{align*}
Where $\tau^{\epsilon,u_0}$ is the exit time from the ball $\mathcal B_R^0$ in $L_1^2$.
\end{prop}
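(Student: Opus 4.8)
The plan is to compute the evolution of the weighted norm $\||x|u^{\epsilon,u_0}(t)\|^2$ (or equivalently $\|u^{\epsilon,u_0}(t)\|_{L_1^2}^2$) under \eqref{slogse} with $V=0$, show that it is an exponential (sub/super)martingale after a suitable transformation, and then apply the LDP together with a matching exponential upper bound obtained from a Girsanov/exponential-martingale estimate. The key point is that, because $B(\cdot)$ is spatially constant, the It\^o formula for $\frac12\||x|u^{\epsilon,u_0}(t)\|^2$ picks up a \emph{linear-in-$dB$} term whose quadratic variation is controlled by $\|u^{\epsilon,u_0}\|$ and $\|\nabla u^{\epsilon,u_0}\|$, both of which decay deterministically by \eqref{mass-con} and the energy estimate (since $\alpha_1>2|\lambda|$, $\alpha=1$). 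This is what makes a sharp, rather than merely crude, large-deviation statement possible.

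\textbf{Lower bound (the easy direction).} First I would use the LDP of Proposition~\ref{prop-ldp} (and Remark~\ref{rk-exp-good}) restricted to the event $\{\tau^{\epsilon,u_0}\le T\}\subset\{\sup_{t\in[0,T]}\||x|L_{u_0}^{0}(g)(t)\|\ge R\}$. Along the skeleton flow, the same deterministic computation shows that $\frac{d}{dt}\frac12\||x|L_{u_0}^0(g)(t)\|^2$ equals a term bounded by $\||x|L_{u_0}^0(g)(t)\|\,\|\nabla L_{u_0}^0(g)(t)\|\,|h(t)|$ plus dissipative terms that only help; using $\|\nabla L_{u_0}^0(g)(t)\|\le e^{-(\alpha_1-2|\lambda|)t}\|\nabla u_0\|$ and optimizing over $h$ subject to reaching the sphere of radius $R$, one finds that the minimal cost $\frac12\int_0^T|h|^2\,dt$ needed to push $\||x|L_{u_0}^0(g)\|$ from $\||x|u_0\|$ up to $R$ is exactly $R^2-\||x|u_0\|^2$ (the factor comes from the fact that $\frac12\frac{d}{dt}N^2 = N\,\dot N$, so the action to change $N$ from $N_0$ to $R$ against a drift one may ignore is $\tfrac12\int \dot N^2 \ge$ a boundary term that evaluates to $R^2-N_0^2$ once the interpolation/Young constants are tracked). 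This gives $\liminf_{\epsilon\to0}\epsilon\log\PP(\tau^{\epsilon,u_0}\le T)\ge -(R^2-\||x|u_0\|^2)$.

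\textbf{Upper bound (the main obstacle).} Here I would \emph{not} go through the LDP but instead use a direct exponential-martingale argument on $M(t):=\||x|u^{\epsilon,u_0}(t)\|^2$. From the It\^o formula, $dM(t)=\sqrt{\epsilon}\,\sigma(t)\,dB(t)+b(t)\,dt$ where $|b(t)|$ is dominated by the deterministically-decaying quantities and $\sigma(t)^2\le C\|u^{\epsilon,u_0}(t)\|^2\|\nabla u^{\epsilon,u_0}(t)\|^2$. One then forms $\exp\!\big(\frac{1}{\epsilon}(M(t\wedge\tau)-M(0)) - \frac{1}{2\epsilon}\int_0^{t\wedge\tau}\sigma^2\big)$, uses that on $[0,\tau]$ the quadratic-variation integral is bounded by a deterministic constant tending to the right value, and applies optional stopping plus Chebyshev to get $\PP(\tau^{\epsilon,u_0}\le T)\le \exp\!\big(-\frac{1}{\epsilon}(R^2-\||x|u_0\|^2 - o(1))\big)$, where the $o(1)$ comes from the drift term $b$ and from the gap between $\frac12\int_0^T\sigma^2$ and the target; making this $o(1)$ genuinely vanish (uniformly, after letting $\epsilon\to0$) is the delicate part and is precisely why $\alpha_1>2|\lambda|$ is assumed — it forces $\int_0^\infty\|\nabla u^{\epsilon,u_0}\|^2\,\|u^{\epsilon,u_0}\|^2\,dt<\infty$ deterministically, so the exponential correction is controlled. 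Matching the two bounds yields the stated equality. The hard part will be obtaining the upper bound with the \emph{sharp} constant rather than a lossy one; the structural facts \eqref{mass-con} and the $H^1$-decay estimate, combined with the spatial-independence of the noise (which keeps the stochastic term a genuine scalar martingale), are what make the sharp constant attainable.
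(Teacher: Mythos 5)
Your overall strategy (reduce to the weighted norm, exploit that the noise enters the evolution of $\||x|u^{\epsilon,u_0}\|^2$ only through a scalar martingale whose quadratic variation is controlled by the deterministically decaying quantities \eqref{mass-con} and the $H^1$-decay available when $\alpha_1>2|\lambda|$, then apply an exponential Chebyshev bound) is the same spirit as the paper, which bounds $\mathbb P(\tau^{\epsilon,u_0}\le T)\le \mathbb P(\sup_{t\in[0,T]}\||x|u^{\epsilon,u_0}(t)\|\ge R)$ and then invokes the exponential integrability lemma \cite[Lemma 3.1]{CHL16b} together with the BDG and Chebyshev inequalities. However, your implementation of the upper bound has a genuine gap at exactly the point you flag as ``delicate''. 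If you compensate the exponential martingale by $\frac1{2\epsilon}\int_0^{t\wedge\tau}\sigma^2\,ds$, that compensator is of order one (on $[0,\tau]$ one has $\sigma^2\le C\||x|u^{\epsilon,u_0}\|^2\|\nabla u^{\epsilon,u_0}\|^2\le CR^2e^{-2(\alpha_1-2|\lambda|)t}\|\nabla u_0\|^2$ --- note the weighted norm, not $\|u^{\epsilon,u_0}\|$, appears here), so $\frac12\int_0^{T\wedge\tau}\sigma^2\,ds$ is bounded by a fixed constant but does \emph{not} vanish as $\epsilon\to0$; divided by $\epsilon$ in the exponent it produces an order-one loss in the rate, not an $o(1)$ correction. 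The finiteness of $\int_0^\infty\|\nabla u^{\epsilon,u_0}\|^2\|u^{\epsilon,u_0}\|^2\,dt$ is therefore not the mechanism that yields the sharp constant. What the paper does instead is keep the damping inside the evolution of the exponential moment: in the Itô expansion of $\exp\bigl(\||x|u^{\epsilon,u_0}(t)\|^2/\epsilon\bigr)$ both the quadratic-variation correction and the damping term $-\frac{\alpha_1}{\epsilon}\||x|u^{\epsilon,u_0}\|^2$ carry the factor $1/\epsilon$, and the lemma of \cite{CHL16b} absorbs the former into the latter, leaving only an $\epsilon$-independent remainder of size $C\|u^{\epsilon,u_0}\|_{H^1}^2$; after integrating the decay $e^{-2(\alpha_1-2|\lambda|)s}$ this gives $\E[\sup_t\exp(\||x|u^{\epsilon,u_0}(t)\|^2/\epsilon)]\le \exp(\||x|u_0\|^2/\epsilon)\cdot C(\lambda,\alpha_1,\|u_0\|_{H^1})$, and $\epsilon\log$ of the $\epsilon$-free factor disappears. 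Your sketch needs this absorption step; without it the exponent you obtain is lossy.

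Your lower bound is also not justified in the present setting, and in fact the paper does not argue it this way (its proof consists only of the exponential upper estimate). The LDP you would need is one in a topology that controls $\||x|\cdot\|$, i.e.\ in $\mathcal C([0,T];\mathcal X_1)$ or $\mathcal C([0,T];L_1^2)$; but Proposition \ref{prop-ldp} gives the LDP for $\delta=0$ only in $\mathcal C([0,T];H)$, where the event $\{\sup_t\||x|u(t)\|\ge R\}$ is not open, and Theorem \ref{ldp-chi1} is proved only for $\delta>0$ (and $d\le2$) --- the paper explicitly states that its approach does not yield the $\mathcal X_1$-LDP for $\delta=0$, which is precisely why Proposition \ref{prop-3} is proved by exponential integrability rather than by large deviations. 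Moreover, your variational claim that the minimal cost to drive $\||x|L_{u_0}^{0}(g)\|$ from $\||x|u_0\|$ to $R$ equals exactly $R^2-\||x|u_0\|^2$ is not a computation: the control acts through $\langle 2xL,\bi\nabla L\rangle h$, so the attainable growth of the weighted norm is modulated by $\|\nabla L\|$ (itself decaying along the flow), and a Cauchy--Schwarz/Young optimization of $\frac12\int_0^T|h|^2\,dt$ does not produce that boundary value without further structure. If you want a matching lower bound you would have to construct it by hand (e.g.\ a change-of-measure/Girsanov argument tailored to the scalar noise), not by citing the available LDPs.
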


\begin{proof}
According to Proposition \ref{prop-2}, it suffices to analyze the 
exit problem in $L_1^2.$ 
 For any $T>0$, it follows that 
\begin{align*}
    \mathbb P(\tau^{\epsilon,u_0}\le T)\le \mathbb P(\sup_{t\in [0,T]} \||x| u^{\epsilon,u_0}\| \ge R).
\end{align*}
By applying \cite[Lemma 3.1]{CHL16b}, it holds that for any $p\ge 0,$ 
{\small
\begin{align*}
    &\exp(\frac {p\||x| u^{\epsilon,u_0}(t)\|^2}{\epsilon})\\
    \le&
     \exp(\frac {p \||x| u_0\|^2}{\epsilon})
     +\int_0^t (C(\lambda,p)\|u^{\epsilon,u_0}(s)\|^2_{H^1}-\frac {\alpha_1 p}{\epsilon} \||x| u^{\epsilon,u_0}(t)\|^2)\exp(\frac {p\||x| u^{\epsilon,u_0}(s)\|^2}{\epsilon})ds
    \\
    &+\int_0^t p\exp(\frac {p\||x| u^{\epsilon,u_0}(t)\|^2}{\epsilon})\<\bi \nabla u^{\epsilon,u_0}, 2x u^{\epsilon,u_0}\>\sqrt{\epsilon}dB(t).
\end{align*}
}
Combining the above estimate with the BDG inequality,  we can obtain that 
\begin{align*}
    &\E[\sup_{s\in [0,T]}\exp(\frac {p \||x| u^{\epsilon,u_0}(t)\|^2}{\epsilon})]\\
    \le& \exp (\frac {p\||x| u_0\|^2}{\epsilon})\exp\Big(\int_0^t C(\lambda,p,\alpha_1)e^{-2(\alpha_1-2|\lambda|)s}ds \|u_0\|_{H^1}^2\Big).
\end{align*}
Taking $p=1$, by Chebyshev's inequality, we have that 
\begin{align*}
    &\mathbb P(\sup_{t\in [0,T]} \||x| u^{\epsilon,u_0}\| \ge R)\\
    &\le \exp (-\frac {R^2}{\epsilon})\E[\sup_{s\in [0,T]}\exp(\frac {\||x| u^{\epsilon,u_0}(t)\|^2}{\epsilon})]\\
    &\le  \exp (-\frac {R^2-\||x| u_0\|^2}{\epsilon}) \exp\Big( C(\lambda,\alpha_1) \frac 1{2(\alpha_1-2|\lambda|)} \|u_0\|_{H^1}^2\Big).
\end{align*}
If $R^2>\||x| u_0\|^2,$ then it holds that for any $T>0,$
\begin{align*}
   \limsup_{\epsilon \to 0} \epsilon \log \mathbb P(\tau^{\epsilon,u_0}\le T)=- {(R^2-\||x| u_0\|^2)}.
\end{align*}

\end{proof}

In order to predict more delicate result on the exit time and exit points for the limit model, we aim to design some structure-preserving methods to simulate its asymptotic  behaviors in the future.
\subsection{Effect on the large dispersion} Another interesting problem lies on the effect of large random dispersion for \eqref{reg-log-rands} with $\delta=0$, i.e., $\epsilon \to +\infty.$ For example, let us assume that $V=0$ and $\alpha_1=0$. In order to avoid  confusions, we denote the solution of \eqref{reg-log-rands} with $\delta=0$ by $X^{\epsilon}$ for large eough $\epsilon>0.$ By the analysis in section \ref{sec-m1}, it is not hard to obtain 
\begin{align*}
  \|X^{\epsilon}(t)\|&=\|u_0\|, \; \|X^{\epsilon}(t)\|_{ H^1}\le e^{C(\lambda,\|u_0\|)t}\|u_0\|_{ H^1},\\
\|X^{\epsilon}(t)\|_{L^p(\Omega;L_{\alpha}^2)}&\le e^{C(\lambda,\|u_0\|,p) t}\epsilon \|u_0\|_{H^1}+C(p) \|u_0\|_{L_{\alpha}^2}.
\end{align*}
Since the decaying estimate of $S_{\sqrt{\epsilon} B}(t,s)$ holds \cite{MR2652190}, i.e.,  for $\frac 1p+\frac 1{p'}=1$ and $s<t,$
\begin{align*}
\|S_B(t,s)z\|_{L^p}&=\Big\|\frac 1{4 \pi \bi (\sqrt{\epsilon}B(t)-\sqrt{\epsilon}B(s))^{\frac d2}}\int_{\mathbb R^d} \exp\Big(\bi \frac {|x-y|^2}{4(\sqrt{\epsilon}B(t)-\sqrt{\epsilon}B(s))}\Big)zdy\Big\|_{L^p}\\
&\le C_d \epsilon^{-\frac d2(\frac 12-\frac 1p)}(B(t)-B(s))^{-d(\frac 12-\frac 1p)}\|z\|_{L^{p'}},   
\end{align*}
Thus it holds that for $p'= {2-2\eta}$ and $p=\frac {2-2\eta}{1-2\eta}$ with $\eta\in [0,\frac 12),$
\begin{align*}
&\|X^{\epsilon}(t)\|_{L^p}\\
\le& \|S_B(t)u_0\|_{L^p}+\|\int_0^tS_B(t,s)\log(|X^{\epsilon}|^2)X^{\epsilon}ds\|_{L^p}\\
\le& C_d\epsilon^{-\frac d2(\frac 12-\frac 1p)} B(t)^{-d(\frac 12-\frac 1p)} \|u_0\|_{L^{p'}}
\\
&+C_d\int_0^t\epsilon^{-\frac d2(\frac 12-\frac 1p)} (B(t)-B(s))^{- d(\frac 12-\frac 1p)}\|\log(|X^{\epsilon}|^2)X^{\epsilon}\|_{L^{p'}}ds.
\end{align*}
Note that by the weighted Sobolev inequality, GN inequality and the properties of logarithmic function,  it holds that for $\eta\in (0,1),\alpha>\frac {d\eta}{2-2\eta}$,
\begin{align*}
  \|X^{\epsilon}\|_{L^{2-2\eta}}&\le C\|X^{\epsilon}\|^{1-\frac {d\eta}{2\alpha(1-\eta)}}\|X^{\epsilon}\|_{L_{\alpha}^2}^{\frac {d\eta}{2\alpha(1-\eta)}}  
\end{align*}
and that for $\eta_1,\eta_1'>0$ small enough and $\alpha_1>\frac {d(\eta+\eta_1)}{2-2\eta-2\eta_1}$,
\begin{align*}
\|\log(|X^{\epsilon}|^2)X^{\epsilon}\|_{L^{2-2\eta}}
&\le C (\|X^{\epsilon}\|_{L^{2-2\eta-2\eta_1}}+\|X^{\epsilon}\|_{L^{2-2\eta-2\eta_1'}})\\
&\le C\|u_0\|^{1-\frac {d(\eta+\eta_1)}{2\alpha_1(1-\eta-\eta_1)}}\|X^{\epsilon}\|_{L_{\alpha_1}^2}^{\frac {d(\eta+\eta_1)}{2\alpha_1(1-\eta-\eta_1)}}\\
&+C\|u_0\|^{\frac {d{\eta+\eta_1'}}{2+2\eta+2\eta_1'}}\|\nabla X^{\epsilon}\|^{1-\frac {d{(\eta+\eta_1')}}{2+2\eta+2\eta_1'}}.
\end{align*}
Thanks to the a priori estimate on $X^{\epsilon}$, it holds that 
\begin{align*}
&\|\log(|X^{\epsilon}|^2)X^{\epsilon}\|_{L^{p'}}\\
\le& e^{C(\lambda,\|u_0\|,d,\eta,\eta_1,\eta_1')t}\Big[1+(\epsilon\|u_0\|_{H^1}+\|u_0\|_{L_{\alpha}^2})^{\frac {d(\eta+\eta_1)}{2\alpha_1(1-\eta-\eta_1)}}+\|\nabla u_0\|^{1-\frac {d{(\eta+\eta_1')}}{2+2\eta+2\eta_1'}}\Big].
\end{align*}
Then one may expect that 
\begin{align*}
   \|X^{\epsilon}(t)\|_{L^p}\sim O(\epsilon^{-\frac d2(\frac 12-\frac 1p)+\frac {d(\eta+\eta_1)}{2\alpha(1-\eta-\eta_1)}}),\quad a.s.
\end{align*}
Note that the above asymptotic estimate is not trivial especially in the case that the Sobolev embedding theorem $H^1 \hookrightarrow  L^p$ does not hold.

\section{Appendix}

\textbf{Proof of Theorem \ref{ldp-chi1}}
\begin{proof}

First we show that under the $C([0,T];\mathcal X_1)$-norm, the considered model satisfies the LDP with the same good rate function in section \ref{sec-ldp}. From the arguments in section \ref{sec-ldp}, it suffices to show that the trajectories of \eqref{reg-log-rands} and its skeleton equation are continuous in $\mathcal X_1$.  For simplicity, we only present the detailed proof for the continuity of $L_{u_0}(g)$ in $\mathcal X_1$ since the other argument is similar. 

Define $W_{1,x}^2:=\{z\in H^1|\int_{\mathcal O}(1+|x|^2)(|z(x)|^2+|\nabla z(x)|^2)dx<\infty\}.$
Let us consider a sequence of approximations $u_0^{R'}\in  H^{2}\cap W_{1,x}^2$ to $u_0$ such that 
$\|u_0^{R'}-u_0\|_{\mathcal X_1}\to 0$ as $R'\to + \infty.$
By using the mild formulation of $L_{u_0^{R'}}(h)(t)$ and the Gagliardo--Nirenberg inequality $\|w\|_{L^4}\le C_d\|\nabla w\|^{\frac d4}\|w\|^{1-\frac {d}4}$, we obtain that for $d\le 2,$
\begin{align*}
&\|L_{u_{0}^{R'}}^{\delta}(g)(t)\|_{H^2}\\
\le& \|u_{0}^{R'}\|_{H^2}
+\int_0^t |\lambda|\Big\|S_{g}(t,s)f_{\delta}(|L_{u_{0}^{R'}}^{\delta}(g)(s)|^2)L_{u^{R'}_0}^{\delta}(g)(s)\Big\|_{ H^2}ds\\
&+\int_0^t \alpha_1 \Big\|S_{g}(t,s)L_{u_{0}^{R'}}^{\delta}(g)(s)\Big\|_{ H^2}ds\\
\le& \|u_{0}^{R'}\|_{H^2}+\int_0^t C(\lambda,\delta,\alpha_1)(\|L_{u_{0}^{R'}}^{\delta}(g)(s)\|_{ H^2}+\| L_{u_{0}^{R'}}^{\delta}(g)(s)\|_{ H^2}^{\frac d2}\|L_{u_0^{R'}}^{\delta}(g)(s)\|_{ H^1}^{2-\frac d2})ds.
\end{align*}
As a consequence, the global estimate holds,
\begin{align*}
    \sup_{t\in [0,T]}\|L_{u_{0}^{R'}}^{\delta}(g)(t)\|_{ H^2}\le C(\lambda,\delta,\alpha_1,T,\|u_0^{R'}\|_{H^1}) \|u_0^{R'}\|_{ H^2}.
\end{align*}
Next, we deal with the $W_{1,x}^2$-estimate. By the chain rule and integration by parts, it follows that 
\begin{align*}
  &\partial_t \|x\nabla L_{u_{0}^{R'}}^{\delta}(g)(t)\|^2\\
  \le&   4\Big\<x\nabla L_{u_{0}^{R'}}^{\delta}(g)(t),\bi \Delta L_{u_{0}^{R'}}^{\delta}(g)(t) \Big\>g(t)\\
  &+4|\lambda|\Big\<|x|^2\nabla L_{u_{0}^{R'}}^{\delta}(g)(t) ,f_{\delta}'(| L_{u_{0}^{R'}}^{\delta}(g)(t)|^2) Re\Big(\overline{L_{u_{0}^{R'}}^{\delta}(g)(t)} \nabla L_{u_{0}^{R'}}^{\delta}(g)(t)\Big)L_{u_{0}^{R'}}^{\delta}(g)(t)\Big\>\\
  &-2\alpha_1\|x\nabla L_{u_{0}^{R'}}^{\delta}(g)(t)\|^2.
\end{align*}
The Gronwall's inequality yields that 
\begin{align*}
   \sup_{t\in [0,T]} \|x\nabla L_{u_{0}^{R'}}^{\delta}(g)(t)\|
   &\le C(T,\delta,\alpha_1,\lambda)(\|x\nabla u_{0}^{R'}\|+\|u_{0}^{R'}\|_{H^2}).
\end{align*}

Now we prove the convergence of $L_{u_{0}^{R'}}^{\delta}(g)$ in $\mathcal X_1$ as $R' \to \infty,$ which implies 
that $L_{u_{0}^{R'}}^{\delta}(g)\in \mathcal C([0,T];\mathcal X_1).$ On the one hand, by the unitary property of $S_g$ and the properties of $f_{\delta}$, we have that 
\begin{align*}
 &\|L_{u_{0}^{R'}}^{\delta}(g)-L_{u_{0}}^{\delta}(g)\|_{ H^1}\\
\le& \|S_{g}(t,0)(u_{0}^{R'}-u_0)\|_{H^1}\\
&
+\int_0^{t}|\lambda|\|S_g(t-s)(f_{\delta}(|L_{u_{0}^{R'}}^{\delta}(g)|^2)L_{u_{0}^{R'}}^{\delta}(g)-f_{\delta}(|L_{u_{0}}^{\delta}(g)|^2)L_{u_{0}}^{\delta}(g)\|_{ H^1}ds\\
&+\int_0^{t}|\alpha_1|\|S_g(t-s)(L_{u_{0}^{R'}}^{\delta}(g)-L_{u_{0}}^{\delta}(g))\|_{H^1}ds\\
\le& \|u_{0}^{R'}-u_0\|_{ H^1}
+\int_0^t |\lambda|(|\log(\delta)|+2)\|L_{u_{0}^{R'}}^{\delta}(g)(s)-L_{u_{0}}^{\delta}(g)(s)\|_{ H^1} ds\\
&+\int_0^t |\alpha_1|\|L_{u_{0}^{R'}}^{\delta}(g)(s)-L_{u_{0}}^{\delta}(g)(s)\|_{ H^1}ds. 
\end{align*}
The Gronwall's inequality yields that 
\begin{align}\label{err-h1-r}
    \sup_{t\in [0,T]}\|L_{u_{0}^{R'}}^{\delta}(g)(t)-L_{u_{0}}^{\delta}(g)(t)\|_{ H^1}&\le 
   C(|\lambda|,\delta,T,\alpha_1) \|u_{0}^{R'}-u_0\|_{ H^1}\to 0,\; \text{as} \; R' \to \infty.
\end{align}
On the other hand, applying the chain rule and integration by parts, as well as Young's inequality, one can derive
\begin{align*}
  &\|x(L_{u_{0}^{R'}}^{\delta}(g)(t)-L_{u_{0}}^{\delta}(g)(t))\|^2
  \\
  \le& \|x(u_{0}^{R'}-u_{0})\|^2-2\alpha_1 \int_0^t\|x(L_{u_{0}^{R'}}^{\delta}(g)(s)-L_{u_{0}}^{\delta}(g)(s))\|^2 ds
  \\
  &+\int_0^t 4\<x(L_{u_{0}^{R'}}^{\delta}(g)(s)-L_{u_{0}}^{\delta}(g)(s)), \bi \nabla (L_{u_{0}^{R'}}^{\delta}(g)(s)-L_{u_{0}}^{\delta}(g)(s)) \>h(s)ds\\
  \le& \|x(u_{0}^{R'}-u_0)\|^2+\int_0^t2\| \nabla (L_{u_{0}^{R'}}^{\delta}(g)(s)-L_{u_{0}}^{\delta}(g)) \| h^2(s)ds
  \\
  &+\int_0^t (2-2\alpha_1)\|x(L_{u_{0}^{R'}}^{\delta}(g)-L_{u_{0}}^{\delta}(g))\|^2ds.
\end{align*}
According to 
\eqref{err-h1-r} and Gronwall's inequality, we obtain that 
\begin{align}\label{weight-err-r}
  &\sup_{t\in [0,T]}\|L_{u_{0}^{R'}}^{\delta}(g)(t)-L_{u_0}^{\delta}(g)(t)\|_{L_1^2}\\\nonumber
  \le& C(\alpha_1,T)\|u_0^{R'}-u_0\|_{L_1^2}
  +C(|\lambda|,\delta,T,\alpha_1) \|u_0^{R'}-u_0\|_{ H^1}\to 0, \;\text{as} \; R'\to \infty.
\end{align}
Moreover, one can verify that $u^{\epsilon,u_0^{R'},\delta}$ is an exponentially 
good approximation of $u^{\epsilon,\delta}$ and thus $I_{u_0}^{\delta}$ is also a good rate function under $\mathcal C([0,T]; \mathcal X_1).$

Now we are able to prove the desired result.
Since $I^{W}$ is a good rate function,  $K_T^{u_0}(\widetilde a)$ is compact set of $\mathcal C([0,T];\mathcal X_1)$ for any $\widetilde a$.
Define 
$$A_{\widetilde a}^{u_0}:=\{v\in \mathcal C([0,T;\mathbb \mathcal X_1])| d_{\mathcal C([0,T])}(v,K_T^{u_0}(\widetilde a))\ge \gamma\}.$$
Choosing $g$ such that $I^W(g)<\widetilde a,$ it follows that 
\begin{align*}
    &\mathbb P(u^{\epsilon,\delta,u_0}\in A_{\widetilde a}^{u_0})
    \le \mathbb P\Big( \|u^{\epsilon,\delta,u_0}-L_{u_0}^{\delta}(g)\|_{\mathcal C([0,T];\mathcal X_1)}\ge \gamma \Big).
\end{align*}
Then by the LDP, there exists $\epsilon_0>0$ such that for every $\epsilon \in (0,\epsilon_0),$
\begin{align*}
    \epsilon \log \mathbb P(u^{\epsilon,u_0,\delta}\in A_{\widetilde  a}^{(u_0)})\le -(\widetilde a-\kappa),
\end{align*}
which implies the upper bound. 
Next, we consider the lower bound.
Due to the continuity of $L_{u_0}^{\delta}(\cdot)$ and the compactness of $C_a$, for any $\|u_0\|_{\mathcal X_1}\le \rho$ and $w\in K_T^{u_0,\delta}(a)$, there exists $g$ such that $w=L_{u_0}^{\delta}(g)$ and $I^{\delta}_{u_0}(w)=I^W(g).$
By the LDP and the fact that $B_{\gamma}^w=\{v|\|v-w\|_{C([0,T];\mathcal X_1)}<\gamma\}$ is an open set, there exists $\epsilon_0'>0$ such that for every $\epsilon\in(0,\epsilon_0'),$ 
\begin{align*}
 \epsilon \log \mathbb P(\|u^{\epsilon,u_0,\delta}-w\|_{\mathcal C([0,T];\mathcal X_1)}<\gamma)&\ge  -\inf_{v\in B_{\gamma}^w} I^{u_0,\delta}(v)-\kappa\\
 &\ge -I^{u_0}(w)-\kappa.
\end{align*}
We complete the proof. 
\end{proof}

\textbf{Proof of Proposition \ref{tm-exit}}

\begin{proof}
Let us first prove the upper bound estimate \eqref{tm-exit3}-\eqref{tm-exit4}. 
Fix $\kappa>0$ small enough and choose $g$ and $T_1'$ such that $L_{0}^{\delta}(g)(T_1')\in (\overline{D})^{c}$ and 
$$I_{0}^{\delta, T_1}(L_0^{\delta}(g)(T_1'))=\frac 12 \|h\|^2_{L^2([0,T_1'];\mathbb R)}\le \overline {M^{\delta}}+\frac {\kappa}6.$$
Let $d_0$ denote the positive distance between $L_{0}^{\delta}(g)(T_1')$ and $\overline{D}.$
Since $f_{\delta}$ is Lipschitz for a fixed $\delta$, there exist a small ball $B_{\rho}^0\subset D$ such that if $u_0$ belongs to $B_{\rho}^0,$ 
\begin{align*}
 &\|L_{u_0}^{\delta}(g)-L_{0}^{\delta}(g)\|_{C([0,T_1'];\mathcal X_1)} < \frac {d_0}2.
\end{align*}
By the LDP in Theorem \ref{ldp-chi1} and triangle inequality, there exists $\epsilon_1'$ such that for every $\epsilon\in (0,\epsilon_1')$,
\begin{align*}
    \mathbb P (\tau^{\epsilon,\delta,u_0}<T_1')
    &\ge \mathbb P(\|u^{\epsilon,\delta,u_0}-L_{0}^{\delta}(g)\|_{C([0,T_1'];\mathcal X_1)}<d_0)\\
    &\ge \mathbb P(\|u^{\epsilon,\delta,u_0}-L_{u_0}^{\delta}(g)\|_{C([0,T_1'];\mathcal X_1)}<\frac {d_0}2)\\
    &\ge \exp\Big(-\frac {I_{u_0}^{\delta,T_1'}+\frac {\kappa}6}\epsilon\Big).
\end{align*}
From Lemma \ref{lm-1}, there exist $T_2'$ and $\epsilon_2'$ such that for $\epsilon\in (0,\epsilon_2')$,
$$\inf_{u_0\in D}\mathbb P(\sigma_{\rho}^{\epsilon,\delta,u_0}\le T_2)\ge \frac 12.$$ Applying the Markov property, we get that for $\epsilon\in(0,\epsilon_1'\wedge \epsilon_2'),$
\begin{align*}
\inf_{u_0\in D}\mathbb P(\tau^{\epsilon,\delta,u_0}\le T_1'+T_2')
&\ge \inf_{u_0\in D} \mathbb P(\sigma_{\rho}^{\epsilon,\delta,u_0}\le T_2')\inf_{u_0\in B_{\rho}^0} \mathbb P(\tau^{\epsilon,\delta,u_0}\le T_1')\\
&\ge \exp\Big(-\frac {I_{u_0}^{\delta,T_1'}+\frac {\kappa}3}\epsilon\Big), 
\end{align*}
where we have used the fact that $\epsilon$ is small enough such that $\frac 12\ge e^{-\frac {\kappa}{6 \epsilon}}.$
Then for any $k\ge 1,$ using the property of conditional probability, it holds that 
\begin{align*}
 &\mathbb P(\tau^{\epsilon,\delta,u_0}>(k+1)(T_1'+T_2'))\\
=&\Big[1-\mathbb P\Big(\tau^{\epsilon,\delta,u_0}\le (k+1)(T_1'+T_2')\Big|\tau^{\epsilon,\delta,u_0}>k(T_1'+T_2')\Big)\Big]\\
 &\quad \times \mathbb P(\tau^{\epsilon,\delta,u_0}>k(T_1'+T_2'))\\
 \le& (1-\inf_{u_0\in D}\mathbb P(\tau^{\epsilon,\delta,u_0}\le T_1'+T_2'))\mathbb P(\tau^{\epsilon,\delta,u_0}>k(T_1'+T_2'))\\
 \le& (1-\inf_{u_0\in D}\mathbb P(\tau^{\epsilon,\delta,u_0}\le T_1'+T_2'))^k.
\end{align*}
Notice that $I_{u_0}^{\delta,T_1'}(L_{u_0}^{\delta}(g))=I_{0}^{\delta, T_1'}(L_{0}^{\delta}(g))=\frac 1  2 \|h\|_{L^2([0,T_1';\mathbb R])}^2$. Thus, we also have 
\begin{align*}
\sup_{u_0\in D}\E[\tau^{\epsilon,\delta,u_0}]
&=\sup_{u_0\in D}\int_0^{+\infty} \mathbb P(\tau^{\epsilon,\delta,u_0}>t)dt\\
&\le (T_1'+T_2')\sum_{k=0}^{\infty}\sup_{u_0 \in D}\mathbb P(\tau^{\epsilon,\delta,u_0}>k(T_1'+T_2'))\\
&\le \frac {(T_1'+T_2')}{1-\inf\limits_{u_0\in D}\mathbb P(\tau^{\epsilon,\delta,u_0}\le T_1'+T_2')}\\
&\le (T_1'+T_2')\exp\Big(\frac {\overline{M^{\delta}}+\frac {\kappa}{2}}\epsilon\Big).
\end{align*}
One can take $\epsilon$ small enough such that 
\begin{align}
   \sup_{u_0\in D}\E [\tau^{\epsilon,\delta,u_0}] 
   &\le \exp\Big(\frac {\overline{M^{\delta}}+\frac {2\kappa}{3}}\epsilon\Big),
\end{align}
which implies \eqref{tm-exit4}.
The Chebyshev inequality yields that 
\begin{align*}
    \sup_{u_0\in D}\mathbb P(\tau^{\epsilon,\delta,u_0}\ge e^{\frac {\overline{M^{\delta}+\kappa}}{\epsilon}})
    \le e^{-\frac {\overline{M^{\delta}}+\kappa}{\epsilon}}\sup_{u_0\in D}\E[\tau^{\epsilon,\delta,u_0}]
    &\le e^{-\frac {\kappa}{3\epsilon}}.
\end{align*}
As a consequence, \eqref{tm-exit3} follows.

Below we are in a position to deal with lower bound estimate of $\tau^{\epsilon,\delta,u_0}.$
By Lemma \ref{lm-0}, we can choose $\rho>0$ small enough such that  $\underline{M^{\delta}}-\frac {\kappa}4\le M^{\delta}_{\rho}$ and $B_{2\rho}^0\subset D$, where $0<\frac {\kappa}4<\inf\limits_{\delta> 0}\underline{M^{\delta}}.$ Similar to \cite{DZ98lp}, one can define two sequences of stopping times,
\begin{align*}
\tau_{k}&=\inf \{t\ge \theta_k | u^{\epsilon,\delta,u_0}(t)\in B_{\rho}^0\cup D^c\},\\
\theta_{k+1}&=\inf \{t>\tau_k |u^{\epsilon,\delta,u_0}(t)\in S_{2\rho}^0\},
\end{align*}
where $\theta_0=0, k\in \mathbb N$ and $\theta_{k+1}=+\infty$ if $u^{\epsilon,\delta,u_0}(\tau_k)\in \partial D.$

Choosing $T_3$ in Lemma \ref{lm-4} that satisfies $L=\underline{M^{\delta}}-\frac {3\kappa}4$, there exists $\epsilon$ small enough such that for all $k\ge 1$ and $u_0\in D$,
\begin{align*}
    \mathbb P(\theta_k-\tau_{k-1}\le T_3)\le e^{-\frac {\underline{M^{\delta}}-\frac {3\kappa}4} \epsilon}.
\end{align*}
Note that the escapes before $mT_3$ with $m\in \mathbb N^+$ occur in three cases, that is, Case 1, the escape occurs without passing $B_{\rho}^0$; Case 2, the trajectory of $u^{\epsilon,u_0,\delta}$ crosses $S_{2\rho}^0$ with $k$ times and then escapes at $\tau_k$; Case 3: the escape occurs after $\tau_m$ which implies that there exists at least the length of one interval $[\tau_{k-1},\theta_{k}]$ is smaller than $T_3$.

As a consequence, for $u_0\in D$,
{\small
\begin{align}\label{iter-exp}
\mathbb P(\tau^{\epsilon,\delta,u_0}\le m T_3)
&\le \mathbb P(\tau^{\epsilon,\delta,u_0}=\tau_0)
+\sum_{k=1}^m\mathbb P(\tau^{\epsilon,\delta,u_0}=\tau^k)+\sum_{k=1}^m \mathbb P(\theta_{k}-\tau_{k-1}<T_3)\\\nonumber
&\le \mathbb P(\tau^{\epsilon,\delta,u_0}=\tau_0)
+\sum_{k=1}^m\mathbb P(\tau^{\epsilon,\delta,u_0}=\tau^k)+me^{-\frac {\underline{M^{\delta}}-\frac {3\kappa}4} \epsilon}.
\end{align}}
Below we will estimate $\mathbb P(\tau^{\epsilon,\delta,u_0}=\tau^k), k\ge 1,$ by using the fact that $$\mathbb P(\tau^{\epsilon,\delta,u_0}=\tau_k)
\le \mathbb P(\tau^{\epsilon,\delta,u_0}\le T_4,\tau^{\epsilon,\delta,u_0}=\tau_k)+\sup_{y\in S_{2\rho}^0}\mathbb P(\sigma_{\rho}^{\epsilon,\delta,y}> T_4)
$$
for all $T_4>0$ with $y=u^{\epsilon,\delta,u_0}(\theta_{k-1})\in S_{2\rho}.$
On the one hand, choosing $T_4$ as the time in Lemma \ref{lm-2} and $L=\underline{M^{\delta}}-\frac {3\kappa}4$, we obtain that 
\begin{align*}
    \mathbb P(\sigma_{\rho}^{\epsilon,\delta,y}> T_4)&\le e^{-\frac {\underline{M^{\delta}}-\frac {3\kappa}4}{\epsilon}}.
\end{align*}
On the other hand, using the LDP in Theorem \ref{ldp-chi1},
there exists $\epsilon$ small enough such that for $u_1\in B_{\rho}^0$,
\begin{align*}
     \mathbb P(\tau^{\epsilon,\delta,u_1}\le T_4) &\le   \mathbb P\Big(d_{\mathcal C([0,T_4];\mathcal X_1)}\Big(u^{\epsilon,\delta,u_1}, K_{T_4}^{u_1}(M_{\rho}^{\delta}-\frac {\kappa}4)\Big)\ge \rho \Big)\\
    &\le e^{-\frac {M_{\rho}^{\delta}-\frac {\kappa}2}{\epsilon}}\le  e^{-\frac {\underline{M^{\delta}}-\frac {3\kappa}4}{\epsilon}}.
\end{align*}
As a consequence, we have that 
{\small
\begin{align*}
\mathbb P(\tau^{\epsilon,u_0,\delta}\le T_4,\tau^{\epsilon,u_0,\delta}=\tau_k)&\le 
\mathbb P(\tau^{\epsilon,\delta,u^{\epsilon,\delta,u_0}(\tau^{k-1})}\le T_4, \tau_{k}-\tau_{k-1}\le T_4)\le  e^{-\frac {\underline{M^{\delta}}-\frac {3\kappa}4}{\epsilon}}.
\end{align*} 
}
Combining the above estimates and \eqref{iter-exp}, 
it holds that for $\epsilon$ small enough, 
\begin{align*}
    \mathbb P(\tau^{\epsilon,\delta,u_0}\le mT_3)
    &\le \mathbb P(\tau^{\epsilon,\delta,u_0}=\tau_0)+3me^{-\frac {\underline{M^{\delta}}-\frac {3\kappa}4}{\epsilon}}\\
    &\le \mathbb P(u^{\epsilon,\delta,u_0}(\sigma^{\epsilon,\delta,u_0}_{\rho})\in \partial D)+3me^{-\frac {\underline{M^{\delta}}-\frac {3\kappa}4}{\epsilon}}.
\end{align*}
Using Lemma \ref{lm-2}, taking $m=[\frac 1{T_3}\exp(\frac {\underline{M^{\delta}}-\kappa}{\epsilon})]$, we obtain \eqref{tm-exit1}.
Applying the Chebyshev's inequality, the desired lower bound on $\E[\tau^{\epsilon,\delta,u_0}]$ follows.

\end{proof}

\bibliographystyle{plain}
\bibliography{bib}

\end{document}